\newtheorem{theorem}{Theorem}[section]
\newtheorem{lemma}{Lemma}[section]
\newtheorem{remark}{Remark}[section]
\newtheorem{remarks}{Remarks}[section]
\newtheorem{proposition}{Proposition}[section]
\numberwithin{equation}{section}
\newcommand{\R}{\mathbb R}
\newcommand{\Z}{{\mathbb Z}}
\newcommand{\ft}{{\mathcal{F}}}
\newcommand{\Sch}{{\mathcal{S}}}
\newcommand{\supp}{{\mbox{supp}}}
\newcommand{\sgn}{{\mbox{sgn}}}
\newcommand{\px}{\partial_x}
\newcommand{\pt}{\partial_t}
\def\norm#1{\|#1\|}
\def\bra#1{\langle#1\rangle}
\def\wt#1{\widetilde{#1}}
\def\wh#1{\widehat{#1}}
\def\set#1{\{#1\}}
\begin{document}
\title[IBVP for the biharmonic NLS in a quarter plane]{Lower regularity solutions of the biharmonic Schr\"odinger equation in a quarter plane}
\author[Capistrano Filho]{Roberto de A. Capistrano Filho}
\address{\emph{Departmento de Matem\'atica,  Universidade Federal de Pernambuco (UFPE), 50740-545, Recife (PE), Brazil.}}
\email{roberto.capistranofilho@ufpe.br}
\author[Cavalcante]{M\'arcio Cavalcante}
\address{\emph{Instituto de Matem\'{a}tica, Universidade Federal de Alagoas (UFAL),  57072-900, Macei\'o (AL), Brazil.}}
\email{marcio.melo@im.ufal.br}
\author[Gallego]{Fernando A. Gallego}
\address{\emph{Departamento de Matematicas y Estad\'istica, Universidad Nacional de Colombia (UNAL), Cra 27 No. 64-60, 170003, Manizales, Colombia.}}
\email{fagallegor@unal.edu.co}


\subjclass[2010]{35Q55, 35G15, 35C15, 35A07, 35G30}\keywords{Biharmonic Schr\"odinger equation, initial-boundary value problem, local well-posedness, quarter plane}

\begin{abstract}
This paper deals with the initial-boundary value problem of the biharmonic cubic nonlinear Schr\"odinger equation in a quarter plane with inhomogeneous Dirichlet-Neumann boundary data. We prove local well-posedness in the low regularity Sobolev spaces by introducing Duhamel boundary forcing operator associated to the linear equation in order  to construct solutions in the whole line. With this in hand, the energy and nonlinear estimates allow us to apply the Fourier restriction method, introduced by J. Bourgain, to get the main result of the article. Additionally, adaptations of this approach for the biharmonic cubic nonlinear Schr\"odinger equation on star graphs are discussed.

\end{abstract}
\maketitle

\section{Introduction}

\subsection{Presentation of the model}   Fourth-order nonlinear Schr\"odinger (4NLS) equation or biharmonic cubic nonlinear Schr\"odinger equation 
\begin{equation}
\label{fourtha}
i\partial_tu +\partial_x^2u-\partial_x^4u=\lambda |u|^2u,
\end{equation}
has been introduced by Karpman \cite{Karpman} and Karpman and Shagalov \cite{KarSha} to take into account the role of small fourth-order dispersion terms in the propagation of intense laser beams in a bulk medium with Kerr nonlinearity. Equation \eqref{fourtha} arises in many scientific fields such as quantum mechanics, nonlinear optics and plasma physics, and has been intensively studied with fruitful references (see \cite{Ben,CuiGuo,Karpman,Paus,Paus1} and references therein).

The past twenty years such 4NLS has been deeply studied from different mathematical viewpoint. For example, Fibich \textit{et al.} \cite{FiIlPa} worked various properties of the equation in the sub-critical regime, with part of their analysis relying on very interesting numerical developments.  The well-posedness and existence of solutions for different domains have been shown (see, for instance, \cite{CaCaGa,Kwak,Ozsari,Paus,Paus1,tsutsumi,Tzvetkov, WenChaiGuo1}) by means of the Fourier restriction method, energy method, forcing boundary operators, Laplace transform, harmonic analysis, Fokas method, etc.  

It is interesting to point out that there are many works related to the equation \eqref{fourtha} not only dealing with well-posedness theory. For example, recently Natali  and Pastor \cite{NataliPastor}, considered the fourth-order dispersive cubic nonlinear Schr\"odinger equation on the line with mixed dispersion. They proved the orbital stability, in the $H^2(\mathbb{R})$--energy space, by constructing a suitable Lyapunov function. Considering equation \eqref{fourtha} on the circle, Oh and Tzvetkov  \cite{Tzvetkov}, showed that the mean-zero Gaussian measures on Sobolev spaces $H^s (\mathbb{T})$, for $s>\frac{3}{4}$, are quasi-invariant under the flow. For instance, there has been a significant progress over the recent years and the reader can have a great view in \cite{Burq,Burq1} for the nonlinear Schr\"odinger equation.

In addition to these works, the first and the second authors worked recently with the purpose to prove controllability results for the 4NLS. More precisely, they proved that the solutions of the associated linear system \eqref{fourtha} is globally exponentially stable in a periodic domain $\mathbb{T}$,  by using certain properties of propagation of compactness and regularity in Bourgain spaces. Theses properties together with the local exact controllability ensure that fourth order nonlinear Schr\"odinger is globally exactly controllable, for details see \cite{CaCa}.

In a recently work, \"Ozsari and Yolcu \cite{Ozsari} proposed the equation \eqref{fourtha} without the term $\partial^2_xu$. This system has an interesting physical point of view, precisely, the model corresponds to a situation in which wave is generated from a fixed source such that it moves into the medium in one specific direction.

\subsection{Setting of the problem}

 We mainly consider the biharmonic Sch\"odinger equation on the right half-line 
\begin{equation}\label{fourth}
\begin{cases}
i\pt u - \px^4 u + \lambda |u|^2u=0, & (t,x)\in (0,T)  \times(0,\infty),\\
u(0,x)=u_0(x),                                   & x\in(0,\infty),\\
u(t,0)=f(t),\ u_x(t,0)=g(t)& t\in(0,T).
\end{cases}
\end{equation}
With suitable choices of $f(t)$ and $g(t)$ in the equation \eqref{fourth}, we are interested on the following initial-boundary value problem (IBVP):

\vspace{0.2cm}
\noindent \textit{Is the IBVP \eqref{fourth} local well-posed  in the low regularity Sobolev space, more precisely, in $H^s(\R^+)$ for $0\leq s<\frac12$?}
\vspace{0.2cm}

Before to present the answer for this question, let us to present some brief comments of the techniques to solve IBVPs on the half-line.


\subsection{Comments about the techniques to solve IBVPs on the half-line} Different techniques have been developed in the last years in order to solve IBVPs associated to some dispersive models on the half-line. In \cite{Fokas1}, Fokas introduced an approach to solve IBVPs associated to integrable nonlinear evolution equations, which is known as the unified transform method (UTM) or as Fokas transform method. The UTM provides a generalization of the Inverse Scattering Transform method from initial value problems (IVP) to IBVPs. The  classical method based on the Laplace transform was used successfully in the works \cite{Bona,BoShuZH,tzirakis,tzirakis3} developed by Bona, Sun, Zhang, Erdogan,  Compaan and Tzirakis.  Recently, a new approach was introduced by Colliander and Kenig \cite{CK} by recasting the IBVP on the half-line by a forced IVP defined in the line $\R$. To see other applications of this technique, we refer the results established by Holmer, Cavalcante and Corcho in the following works \cite{Cavalcante,CC,HolmerNLS,Holmerkdv}. On the other hand, in  \cite{Faminskii},  Faminskii used an approach based on the investigation of special solutions of a ``boundary potential'' type for solution of linearized Korteweg--de Vries (KdV) equation  in order to obtain global results for the IBVP associated to the KdV equation on the half-line with more general boundary conditions. More recently,  Fokas \textit{et al.} \cite{Fokas2} introduced a method which combines the UTM  with a contraction mapping principle.  We caution that this is only a small sample of the extant works on these techniques.

\subsection{Biharmonic NLS equation} As mentioned in the beginning of this introduction, 4NLS equation or biharmonic NLS equation 
\begin{equation}
\label{fourthA}
i\partial_tu +\partial_x^2u-\partial_x^4u=\lambda |u|^2u,
\end{equation}
was introduced by Karpman \cite{Karpman} and Karpman and Shagalov \cite{KarSha}.
Huo and Jia \cite{HuoJia} studied the Cauchy problem of one-dimensional fourth-order nonlinear Schr\"odinger equation  related to the vortex filament. They proved the local well-posedness for initial data in $H^s(\mathbb{R})$ for $s\geq\frac{1}{2}$ by using the  Fourier restriction norm method under certain coefficient condition. Concerning local well-posedness of the nonlinear fourth order Schr\"odinger equations, we cite \cite{Hao,Seg}. With respect of the global well-posedness, in one dimensional case with some restriction in the initial data for various nonlinearities, we infer \cite{Hay,Hay1,Hay2,Hay3} and, finally, for the study $n$-dimensional case the reader can see \cite{Beno,Beno1}.

Lastly, in a recent work of IBVP for biharmonic Schr\"odinger equation on the half-line
\begin{equation}
\label{fourthB}
i\partial_tu +\partial_x^4u=\lambda |u|^pu,
\end{equation}
 \"Ozsari and Yolcu \cite{Ozsari}, proved local well-posedness on the high regularity function spaces $H^s(\R^+)$, for $\frac12<s<\frac92$, with $s\neq\frac{3}{2}$. The authors used the Fokas method \cite{Fokas,Fokas1} combined with contraction arguments to achieve the result. 

\subsection{Main result} Now, let us present the main result of this article. Consider the biharmonic Sch\"odinger equation on the right half-line
\begin{equation}\label{fourthc}
\begin{cases}
i\pt u +\gamma\px^4 u + \lambda |u|^2u=0, & (t,x)\in (0,T)  \times(0,\infty),\\
u(0,x)=u_0(x),                                   & x\in(0,\infty),\\
u(t,0)=f(t),\ u_x(t,0)=g(t)& t\in(0,T),
\end{cases}
\end{equation}
for $\gamma, \lambda\in\mathbb{R}$. We say that system \eqref{fourthc} is focusing if $\gamma\lambda< 0$ and defocusing when $\gamma\lambda > 0$.
In this paper we will study the case when $\gamma=-1$, however the approach used here can be applied when $\gamma\in\mathbb{R}\setminus\{0\}$. 

The presence of two boundary conditions in \eqref{fourthc} can be  motivated by integral identities on smooth decaying solutions for the linear equation 
\begin{equation}\label{eq:linear 4kdv}
i\partial_t u-\partial_x^4u=0.
\end{equation}
Indeed, for a smooth decaying solution $u$ of \eqref{eq:linear 4kdv} and $T>0$, we have 
\begin{equation}\label{unico1}
\begin{split}
	\int_0^{\infty}|u(T,x)|^2dx=&\int_0^{\infty}|u(0,x)|^2dx\\
	& -  \int_0^T\text{Im}(\partial_x^3u(t,0)
\overline{u}(t,0)) dt +\int_0^T\text{Im}(\partial_x^2u(t,0)\partial_x\overline{u}(t,0)) dt.
	\end{split}
\end{equation}
Thus, from \eqref{unico1} we can conclude  that if we assume $u(0,x)=u(t,0)=u_x(t,0)=0$ the linear solution for \eqref{eq:linear 4kdv} is the trivial one.

It is well-known by Kenig \textit{et al.} \cite{KPV1991} that the \textit{local smoothing effect} for the fourth-order linear group operator $e^{it\px^4}$ 
\begin{equation}\label{eq:lsm}
\|\partial_x^je^{it\partial_x^4}\phi\|_{L_x^{\infty}\dot{H}^{\frac{2s+3-2j}{8}}(\mathbb{R}_t)}\leq c\|\phi\|_{H^{s}(\mathbb{R})},\ \text{for}\ j=0,1\ \text{and} \ s\in \R,
\end{equation}
which motivates the relation of regularities among initial and boundary data. 

Thus, we are able to present the main goal in the paper: To answer the problem cited on the beginning of this introduction, that is, to show the local well-posedness of \eqref{fourthc} in the low regularity Sobolev space $H^s(\R^+)$, for $0\leq s<\frac12$.

We state the main theorem for IBVP \eqref{fourthc} as follows.
\begin{theorem}\label{theorem1}
	Let $s \in [0,\frac12)$. For given initial-boundary data $$(u_0,f,g)\in H^s(\mathbb{R}^+)\times H^{\frac{2s+3}{8}}(\mathbb{R}^+)\times H^{\frac{2s+1}{8}}(\mathbb{R}^+),$$ there exist a positive time $$T:=T\left(\|u_0\|_{H^s(\mathbb{R}^+)},\|f\|_{H^{\frac{2s+3}{8}}(\mathbb{R}^+)},\|g\|_{H^{\frac{2s+1}{8}}(\mathbb{R}^+)}\right),$$ and unique solution $u(t,x) \in C((0 , T);H^s(\R^+))$ of the IBVP \eqref{fourthc}, when $\gamma=-1$, satisfying
\[u \in C\bigl(\mathbb{R}^+;\; H^{\frac{2s+3}{8}}(0,T)\bigr) \cap X^{s,b}((0,T) \times \R^+) \; \mbox{ and } \; \partial_xu\in C\bigl(\R^+;\; H^{\frac{2s+1}{8}}(0,T)\bigr),\]
for some $b(s) < \frac12$.  Moreover, the map $(u_0,f,g)\longmapsto u$ is analytic from $H^s(\mathbb{R}^+)\times H^{\frac{2s+3}{8}}(\mathbb{R}^+)\times H^{\frac{2s+1}{8}}(\mathbb{R}^+)$ to  $C\big((0,T);\,H^s(\mathbb{R}^+)\big)$.
\end{theorem}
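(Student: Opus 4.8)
The plan is to adapt the Fourier restriction (Bourgain) method together with the Colliander--Kenig recasting of the half-line problem as a forced initial value problem on the whole line, in the spirit of the works of Holmer and Cavalcante--Corcho cited in the introduction. The strategy is to build the solution on $\R$ as a superposition of three pieces: the free evolution of an $H^s(\R)$-extension of $u_0$, a Duhamel term carrying the cubic nonlinearity, and two boundary forcing terms --- one for each of the inhomogeneous conditions $u(t,0)=f(t)$ and $u_x(t,0)=g(t)$ --- whose amplitudes are then tuned so that the total boundary traces match $f$ and $g$. Restricting the resulting whole-line solution to $x>0$ yields the solution of \eqref{fourthc}.

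First I would set up the linear theory. The free propagator $e^{-it\px^4}$ (for $\gamma=-1$) has symbol $\xi^4$, so the natural resolution space is the Bourgain space $X^{s,b}$ adapted to this fourth-order dispersion. Using the local smoothing estimate \eqref{eq:lsm} and its dual and Kato-type counterparts, I would record that $e^{-it\px^4}u_0^*$ lies in $X^{s,b}\cap C(\R_t;H^s(\R))$ and, crucially, has boundary traces controlled in $H^{\frac{2s+3}{8}}$ for $u$ and in $H^{\frac{2s+1}{8}}$ for $\px u$ --- exactly the compatibility of regularities dictated by \eqref{eq:lsm}. The core construction is the pair of Duhamel boundary forcing operators. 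Taking the Fourier transform in $t$ reduces the homogeneous equation $i\pt v-\px^4 v=0$ to $\px^4\wh v=-\tau\wh v$; the four roots $\mu$ of $\mu^4=-\tau$ split into two with $\mathrm{Re}\,\mu<0$ (admissible on the right half-line) and two that grow. Following Colliander--Kenig I would instead force the equation by a distribution concentrated at $x=0$, obtaining operators $\mathcal{L}^{1}$ and $\mathcal{L}^{2}$ defined on all of $\R$, smooth away from $x=0$, and expressible through Riemann--Liouville fractional integrals; the point is to prove that they map the boundary spaces $H^{\frac{2s+3}{8}}$ and $H^{\frac{2s+1}{8}}$ boundedly into $X^{s,b}$ while producing the prescribed traces for $v$ and $v_x$.

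With the linear estimates in hand, the second main ingredient is the trilinear estimate
\[
\norm{\,|u|^2u\,}_{X^{s,b-1}} \les \norm{u}_{X^{s,b}}^3, \qquad 0\le s<\tfrac12,\ b<\tfrac12,
\]
which controls the Duhamel contribution of the cubic nonlinearity. Because the fourth-order dispersion $\xi^4$ is considerably stronger than that of classical NLS or KdV, this should hold down to $s=0$ for a suitable $b$ slightly below $\frac12$; I would prove it by the standard dyadic and Plancherel reduction, splitting frequencies by size and exploiting the gain from the modulation weight $\bra{\tau+\xi^4}$ (equivalently a Bourgain--Strichartz $L^4$ bound for $e^{-it\px^4}$). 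The whole scheme is then closed by a contraction argument: I would write the integral equation for $u$, determine the two forcing amplitudes by imposing the boundary conditions --- which amounts to inverting a $2\times2$ linear system whose entries are the boundary traces of $\mathcal{L}^{1}$ and $\mathcal{L}^{2}$ --- and run the fixed point in a ball of $X^{s,b}$ on a short interval $[0,T]$, with $T$ depending on the three data norms. Uniqueness and the analyticity of the data-to-solution map follow automatically, since that map is a composition of bounded linear operators with the polynomial nonlinearity.

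The main obstacle, I expect, is the boundary forcing step: constructing $\mathcal{L}^{1}$ and $\mathcal{L}^{2}$ for the fourth-order operator, establishing their $X^{s,b}$ and trace-space continuity uniformly in the low-regularity range $0\le s<\frac12$, and --- most delicately --- verifying that the $2\times2$ trace matrix used to match $f$ and $g$ is boundedly invertible, since the two conditions $u(t,0)=f$ and $u_x(t,0)=g$ couple through the two decaying exponential modes. Handling the small values of $s$ near $0$ together with the half-integer thresholds produced by the fractional integrals (the Sobolev trace compatibility conditions) will require the most care; the trilinear estimate, by contrast, should be comparatively soft given the strength of the dispersion.
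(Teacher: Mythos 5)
Your proposal follows essentially the same route as the paper: recasting the IBVP as a forced problem on $\R$ via Colliander--Kenig Duhamel boundary forcing operators built from Riemann--Liouville fractional integrals, proving space-trace, derivative-time-trace and $X^{s,b}$ estimates with $b<\frac12$, invoking a trilinear estimate for the cubic nonlinearity, and closing a contraction after inverting the $2\times2$ matrix of boundary traces of the two forcing operators (whose invertibility, which you rightly flag as the delicate point, the paper settles by explicitly computing $\det A(\lambda_1,\lambda_2)$ and choosing admissible $\lambda_1,\lambda_2$). The only minor deviations are that the paper imports the trilinear estimate from the literature rather than proving it, and gains smallness by a scaling argument rather than by shrinking the time interval.
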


\begin{remarks}Finally, the following comments are now given in order:
\begin{itemize}
\item[1.]The proof of Theorem \ref{theorem1} is based on the Fourier restriction method for a suitable extension of solutions. We first convert the IBVP of \eqref{fourthc} posed in $\R^+ \times \R^+$ to the initial value problem (IVP) of \eqref{fourthc} (integral equation formula) in the whole space $\R \times \R$ (see Section \ref{sec:Duhamel boundary forcing operator}) by using the Duhamel boundary forcing operator. The energy and nonlinear estimates (will be established in Sections \ref{sec:energy}) allow us to apply the Picard iteration method for IVP of \eqref{fourthc}, and hence we can complete the proof. The new tools used here are the Duhamel boundary forcing operator for the fourth-order linear equation and its analysis.
\item[2.]Note that Theorem \ref{theorem1} give us the local well-posedness in low regularity for the biharmonic nonlinear Schr\"odinger equation. However, in \cite{Ozsari},  the authors showed the local well-posedness in the Sobolev spaces, by using Fokas approach. We point out that the low regularity in our   main result is obtained using the boundary forcing operator, proposed by Holmer, which has been obtained in an independent way and with other approach to the one proved in \cite{Ozsari}.
\item[3.] The approach used in our result, together with some extension as it was done in \cite{Cavalcante,CaK,HolmerNLS,Holmerkdv} also guarantee the local well-posedness result in high regularity.
\end{itemize}
\end{remarks}

\subsection{Notations} In all this paper, we will consider  $\R^+$ as $(0,\infty)$. Moreover, for positive real numbers $x,y \in \R^+$, we mean $x \lesssim y$ by $x \le Cy$ for some $C>0$. Also, denote $x \sim y$ by $x \lesssim y$ and $y\lesssim x$. Similarly, $\lesssim_s$ and $\sim_s$ can be defined, where the implicit constants depend on $s$. 

\vspace{0.2cm}

Our work is outlined in the following way: In Section \ref{sec:pre}, we introduce some function spaces defined on the half-line and construct the solution spaces. Section \ref{sec:Duhamel boundary forcing operator} is devoted to introduce the boundary forcing operator for the biharmonic Schr\"odinger equation. In section \ref{sec:energy}, we show the energy estimates and present the trilinear estimates, respectively. The main result of this article, Theorem \ref{theorem1}, is proved in Section \ref{sec:main proof 1}. Finally, Section \ref{further}, we present some open problems which seems to be of interest from the mathematical point of view.

\section{Preliminaries}\label{sec:pre}

Throughout the paper, we fix a cut-off function $\psi(t):=\psi$
\begin{equation}\label{eq:cutoff}
\psi \in C_0^{\infty}(\mathbb{R}) \quad \mbox{such that} \quad 0 \le \psi \le1, \quad  \psi \equiv 1 \; \mbox{ on } \; [0,1], \quad \psi \equiv 0, \;\text{for } |t| \ge 2, 
\end{equation}
and for $T>0$ we denote $\psi_{T}(t)=\frac{1}{T}\psi(\frac{t}{T})$.

\subsection{Sobolev spaces on the half-line}
For $s\geq 0$, we define the homogeneous $L^2$-based Sobolev spaces $\dot{H}^s=\dot H^s(\R)$  by the norm $\|\phi\|_{\dot{H}^s}=\||\xi|^s\hat{\psi}(\xi)\|_{L^2_{\xi}}$ and the $L^2$-based inhomogeneous Sobolev spaces $H^s=H^s(\mathbb{R})$ by the norm $\|\phi\|_{{H}^s}=\|(1+|\xi|^2)^{\frac{s}{2}}\hat{\psi}(\xi)\|_{L^2_{\xi}}$, where $\hat{\phi}$ denotes the Fourier transform of $\phi$. Moreover, we say that $f \in H^s(\mathbb{R}^+)$, if there exists $F \in H^s(\R)$ such that $f(x)=F(x)$ for $x>0$, in this case we set $$\|f\|_{H^s(\mathbb{R}^+)}=\inf_{F}\|F\|_{H^{s}(\mathbb{R})}.$$ On the other hand, for $s \in \R$, $f \in H_0^s(\mathbb{R}^+)$ provided that there exists $F \in H^s(\R)$ such that $F$ is the extension of $f$ on $\R$ and $F(x) = 0$ for $x<0$. In this case, we set $\norm{f}_{H^s_0(\R^+)} = \inf_{F} \norm{F}_{H^s(\R)}$. 
For $s<0$, we define $H^s(\mathbb{R}^+)$ as the dual space of $H_0^{-s}(\mathbb{R}^+)$.

Let us also define the sets $C_0^{\infty}(\mathbb{R}^+)=\{f\in C^{\infty}(\mathbb{R});\, \supp f \subset [0,\infty)\}$ and $C_{0,c}^{\infty}(\mathbb{R}^+)$ as the subset of $C_0^{\infty}(\mathbb{R}^+)$, whose members have a compact support on $(0,\infty)$. We remark that $C_{0,c}^{\infty}(\mathbb{R}^+)$ is dense in $H_0^s(\mathbb{R}^+)$ for all $s\in \mathbb{R}$.

To finish this subsection we will give some elementary properties of the Sobolev spaces.

\begin{lemma} \cite[Lemma 3.5]{JK1995}\label{sobolevh0}
For $-\frac{1}{2}<s<\frac{1}{2}$ and $f\in H^s(\mathbb{R})$, we have
	\begin{equation}\label{eq:0}
	\|\chi_{(0,\infty)}f\|_{H^s(\mathbb{R})}\leq c \|f\|_{H^s(\mathbb{R})}.
	\end{equation} 
\end{lemma}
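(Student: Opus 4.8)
The plan is to reduce the boundedness of multiplication by $\chi_{(0,\infty)}$ on $H^s(\mathbb{R})$ to a weighted $L^2$ estimate for the Hilbert transform, in which the constraint $|s|<\frac12$ surfaces naturally as the Muckenhoupt $A_2$ condition. First I would write
\[
\chi_{(0,\infty)}(x)=\tfrac12\bigl(1+\mathrm{sgn}(x)\bigr),
\]
so that the identity piece is trivially bounded on $H^s$ and the whole question reduces to the boundedness of $f\mapsto \mathrm{sgn}(x)\,f$. Taking the Fourier transform, multiplication by $\mathrm{sgn}$ in physical space becomes, up to the constant $-i$, the Hilbert transform $H$ acting on $\hat f$, because $\widehat{\mathrm{sgn}}$ is a constant multiple of $\mathrm{p.v.}\,\xi^{-1}$.

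Next, writing $\|f\|_{H^s}=\|\langle\xi\rangle^s\hat f\|_{L^2_\xi}$, the inequality I must prove is exactly
\[
\|\langle\xi\rangle^s H\hat f\|_{L^2}\lesssim \|\langle\xi\rangle^s \hat f\|_{L^2},
\]
that is, the Hilbert transform is bounded on the weighted space $L^2\bigl(\langle\xi\rangle^{2s}\,d\xi\bigr)$. By the Hunt--Muckenhoupt--Wheeden theorem this holds if and only if the weight $w(\xi)=\langle\xi\rangle^{2s}$ belongs to $A_2$. Since $w$ is comparable to $1$ near the origin and behaves like $|\xi|^{2s}$ at infinity, checking the $A_2$ balance on arbitrary intervals shows $w\in A_2$ precisely when $-1<2s<1$, i.e. $-\frac12<s<\frac12$, which recovers the stated range.

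The step I expect to be the main obstacle is the weighted Hilbert transform bound itself: invoking $A_2$ theory makes it clean, but it is the place where the sharp endpoint restriction is genuinely used. If one prefers a self-contained argument, I would instead split into three cases. For $s=0$ the estimate is immediate since $|\chi_{(0,\infty)}|\le 1$. For $0<s<\frac12$ I would use the Gagliardo seminorm characterization of $H^s$ and decompose the double integral according to the signs of $x$ and $y$; the region $x,y<0$ vanishes, the region $x,y>0$ is dominated by the seminorm of $f$, and the only delicate term is the cross region $x>0,\,y<0$, where $g(x)-g(y)=f(x)$ and integrating out $y$ leaves a multiple of
\[
\int_0^\infty |x|^{-2s}\,|f(x)|^2\,dx .
\]
This is controlled by the one-dimensional fractional Hardy inequality, valid exactly for $0\le s<\frac12$.

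Finally, for $-\frac12<s<0$ I would deduce the estimate by duality: multiplication by $\chi_{(0,\infty)}$ is self-adjoint with respect to the $L^2$ pairing that realizes $H^s(\mathbb{R})$ as the dual of $H^{-s}(\mathbb{R})$, so the bound on $H^s$ follows from the already established bound on $H^{-s}$ with positive exponent $0<-s<\frac12$. Either route yields the claim on the full range $-\frac12<s<\frac12$, and both make transparent that the endpoints $s=\pm\frac12$ are genuinely excluded.
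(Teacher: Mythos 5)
Your argument is correct, but note that the paper itself offers no proof of this lemma: it is simply imported as Lemma 3.5 of Jerison--Kenig \cite{JK1995}, so there is nothing internal to compare against. Both of your routes are standard and sound. The Fourier-side reduction is clean: writing $\chi_{(0,\infty)}=\tfrac12(1+\mathrm{sgn})$, multiplication by $\mathrm{sgn}$ conjugates under the Fourier transform to a constant multiple of the Hilbert transform on $\hat f$, and the desired bound becomes the boundedness of $H$ on $L^2(\langle\xi\rangle^{2s}\,d\xi)$; the weight $\langle\xi\rangle^{2s}$ is in $A_2$ exactly for $|s|<\tfrac12$ (testing on intervals $[0,R]$ with $R\to\infty$ shows the failure at $2s=\pm1$), and Hunt--Muckenhoupt--Wheeden closes the argument, with the sharpness of the range made transparent. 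Your self-contained alternative is also complete: the $L^2$ part of the norm is trivial, the Gagliardo seminorm splits into the vanishing region $x,y<0$, the region $x,y>0$ dominated by $[f]_{H^s}$, and the cross region where integrating $|x-y|^{-1-2s}$ in $y<0$ produces $\int_0^\infty x^{-2s}|f(x)|^2\,dx$, controlled by the fractional Hardy inequality precisely for $0\le s<\tfrac12$; the negative range then follows by duality since multiplication by $\chi_{(0,\infty)}$ is self-adjoint for the $L^2$ pairing realizing $H^{s}$ as the dual of $H^{-s}$. The first route buys an immediate if-and-only-if explanation of the endpoint restriction; the second avoids weighted singular-integral theory entirely and is closer in spirit to what one finds in \cite{JK1995}. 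Either is acceptable.
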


\begin{lemma}\cite[Lemma 2.8]{CK}\label{sobolev0}
If $0\leq s<\frac{1}{2}$, then, for the cut-off function $\psi$ defined in \eqref{eq:cutoff}, $\|\psi f\|_{H^s(\mathbb{R})}\leq c \|f\|_{\dot{H}^{s}(\mathbb{R})}$ and $\|\psi f\|_{\dot{H}^{-s}(\mathbb{R})}\leq c \|f\|_{H^{-s}(\mathbb{R})}$ , where the constant $c$ depends only on $s$ and $\psi$. 
\end{lemma}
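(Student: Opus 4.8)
The plan is to prove the two estimates separately, treating the first one as the essential statement and deriving the second from it by a duality argument. Throughout I use that for $s\ge 0$ one has $(1+|\xi|^2)^s \sim 1 + |\xi|^{2s}$, so that $\|\psi f\|_{H^s(\mathbb R)}^2 \sim \|\psi f\|_{L^2(\mathbb R)}^2 + \|\psi f\|_{\dot H^s(\mathbb R)}^2$. Thus the first estimate reduces to controlling the low-order piece $\|\psi f\|_{L^2}$ and the seminorm piece $\|\psi f\|_{\dot H^s}$ separately by $\|f\|_{\dot H^s}$. The case $s=0$ is trivial since then $\dot H^0 = H^0 = L^2$ and $\|\psi f\|_{L^2}\le \|\psi\|_{L^\infty}\|f\|_{L^2}$, so I assume $0<s<\tfrac12$ from now on.

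For the $L^2$ piece, the key input is the homogeneous Sobolev embedding $\dot H^s(\mathbb R)\hookrightarrow L^p(\mathbb R)$ with $\tfrac1p = \tfrac12 - s$ (valid precisely because $0<s<\tfrac12$); note that in this range $\dot H^s$ is realized as a genuine space of functions, so no constant/polynomial ambiguity arises. Since $\psi$ has compact support, Hölder's inequality with $\tfrac12 = \tfrac1p + \tfrac1q$ gives $\|\psi f\|_{L^2}\le \|\psi\|_{L^q}\|f\|_{L^p}\lesssim \|f\|_{\dot H^s}$, where $\|\psi\|_{L^q}<\infty$ because $\psi\in C_0^\infty$. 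For the seminorm piece I would use the Gagliardo characterization, valid for $0<s<1$,
\[\|g\|_{\dot H^s}^2 \sim \int_{\mathbb R}\int_{\mathbb R}\frac{|g(x)-g(y)|^2}{|x-y|^{1+2s}}\,dx\,dy,\]
and split $\psi(x)f(x)-\psi(y)f(y) = \psi(x)\big(f(x)-f(y)\big) + f(y)\big(\psi(x)-\psi(y)\big)$. The first resulting term is bounded by $\|\psi\|_{L^\infty}^2\|f\|_{\dot H^s}^2$ directly.

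The main obstacle is the second (commutator-type) term $\displaystyle\int_{\mathbb R}|f(y)|^2 W(y)\,dy$ with $W(y)=\int_{\mathbb R}\frac{|\psi(x)-\psi(y)|^2}{|x-y|^{1+2s}}\,dx$, since a crude bound would produce $\|f\|_{L^2}$, which is \emph{not} controlled by $\|f\|_{\dot H^s}$. The resolution is to exploit both the Lipschitz bound $|\psi(x)-\psi(y)|\lesssim |x-y|$ (for $|x-y|\le 1$, using $1-2s>-1$) and the uniform bound $|\psi(x)-\psi(y)|\lesssim 1$ together with the compact support of $\psi$ (for $|x-y|>1$, using $1+2s>1$), which yields the decay $W(y)\lesssim \langle y\rangle^{-1-2s}$. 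Then I apply Hölder once more with the conjugate pair $\tfrac{1}{1-2s}+\tfrac{1}{2s}=1$: the factor $\||f|^2\|_{L^{1/(1-2s)}} = \|f\|_{L^p}^2$ is paired against $W\in L^{1/(2s)}$, whose finiteness follows from the decay of $W$ and $s>0$, so that the commutator term is $\lesssim \|f\|_{L^p}^2\lesssim \|f\|_{\dot H^s}^2$ by the same Sobolev embedding. Combining the pieces gives $\|\psi f\|_{H^s}\lesssim \|f\|_{\dot H^s}$.

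Finally, the second estimate $\|\psi f\|_{\dot H^{-s}}\le c\|f\|_{H^{-s}}$ follows from the first by duality. Since $\psi$ is real-valued, multiplication by $\psi$ is self-adjoint for the $L^2$ pairing, and using that $\dot H^{-s}$ and $H^{-s}$ are the duals of $\dot H^s$ and $H^s$ respectively, one has for $f$ in the dense class $\mathcal S(\mathbb R)$
\[\|\psi f\|_{\dot H^{-s}} = \sup_{\|h\|_{\dot H^s}\le 1}|\langle \psi f, h\rangle| = \sup_{\|h\|_{\dot H^s}\le 1}|\langle f, \psi h\rangle| \le \|f\|_{H^{-s}}\sup_{\|h\|_{\dot H^s}\le 1}\|\psi h\|_{H^s} \lesssim \|f\|_{H^{-s}},\]
where the last inequality is exactly the first estimate applied to $h$. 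A density argument then extends this to all $f\in H^{-s}(\mathbb R)$, completing the proof.
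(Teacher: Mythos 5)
Your proof is correct. A point of order first: the paper contains no proof of this lemma at all --- it is quoted from Colliander--Kenig \cite{CK} (their Lemma 2.8) and the argument is omitted, so the comparison is really with the proof in the cited source rather than with anything in this paper. Against that benchmark, your outline agrees with the standard argument: split $\|\psi f\|_{H^s}$ into the $L^2$ piece and the $\dot{H}^s$ seminorm piece; control the $L^2$ piece by H\"older together with the Sobolev embedding $\dot{H}^s(\mathbb{R})\hookrightarrow L^{2/(1-2s)}$, which is exactly where the restriction $s<\frac12$ enters; and deduce the second estimate $\|\psi f\|_{\dot{H}^{-s}}\lesssim \|f\|_{H^{-s}}$ from the first by duality, using that $\psi$ is real so multiplication by $\psi$ is self-adjoint in the $L^2$ pairing, and that $(\dot{H}^s)^*=\dot{H}^{-s}$ and $(H^s)^*=H^{-s}$ --- legitimate here since $0\le s<\frac12<\frac{d}{2}$ with $d=1$, so $\dot{H}^{\pm s}$ are honest spaces of tempered distributions and Schwartz functions are dense.

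Where you genuinely differ is in the seminorm piece: arguments in the Colliander--Kenig tradition treat $\|D^s(\psi f)\|_{L^2}$ via the Kenig--Ponce--Vega fractional Leibniz/commutator estimates, whereas you use the elementary Gagliardo characterization of $\|\cdot\|_{\dot{H}^s}$ for $0<s<1$ and an explicit kernel computation. You correctly identified the one dangerous step --- a crude bound on the commutator term would produce $\|f\|_{L^2}$, which is \emph{not} dominated by $\|f\|_{\dot{H}^s}$ --- and your resolution is sound: the Lipschitz bound near the diagonal uses $1-2s>-1$, the tail bound uses $1+2s>1$ together with the compact support of $\psi$, giving $W(y)\lesssim \langle y\rangle^{-1-2s}$, and then H\"older with the conjugate pair $\bigl(\tfrac{1}{1-2s},\tfrac{1}{2s}\bigr)$ routes everything back through $\|f\|_{L^{2/(1-2s)}}\lesssim \|f\|_{\dot{H}^s}$; the required integrability $W\in L^{1/(2s)}$ holds because $(1+2s)/(2s)>1$. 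What your approach buys is self-containedness --- no harmonic-analysis machinery beyond Sobolev embedding and the double-integral seminorm --- at the cost of being tied to $0<s<1$, whereas the Fourier/Leibniz route generalizes more readily. The final density argument is routine and correctly handled. No gaps.
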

\begin{remark} Lemma \ref{sobolev0} is equivalent to $$\|f\|_{H^s(\mathbb{R})} \sim \|f\|_{\dot{H}^{s}(\mathbb{R})},$$ for $-\frac12 < s < \frac12$ where $f \in H^s(\mathbb{R})$ with $\supp f \subset [0,1]$.
\end{remark}

The following two auxiliaries lemmas can be found in \cite{CK} and its proofs will be omitted.

\begin{lemma}\cite[Proposition 2.4]{CK}\label{alta}
	If $\frac{1}{2}<s<\frac{3}{2}$ the following statements are valid:
	\begin{enumerate}
\item [(a)] $H_0^s(\R^+)=\big\{f\in H^s(\R^+);f(0)=0\big\},$\medskip
\item [(b)] If  $f\in H^s(\R^+)$ with $f(0)=0$, then $\|\chi_{(0,\infty)}f\|_{H_0^s(\R^+)}\leq c \|f\|_{H^s(\R^+)}$.
	\end{enumerate}
\end{lemma}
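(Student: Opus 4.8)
The plan is to reduce both parts to the low-regularity truncation bound of Lemma \ref{sobolevh0} by differentiating once, using the vanishing trace $f(0)=0$ to eliminate the boundary Dirac mass that the truncation would otherwise create. Throughout I would use two standard facts valid for $\frac12<s<\frac32$: first, the trace at the origin is a well-defined bounded functional on $H^s(\R)$, so that $f(0)$ is independent of the chosen $H^s(\R)$-extension of $f$; second, the norm equivalence
\[
\|g\|_{H^s(\R)}\sim\|g\|_{L^2(\R)}+\|\partial_x g\|_{H^{s-1}(\R)},
\]
which holds because $s-1\in(-\frac12,\frac12)$.

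For part (b) I would argue as follows. Fix $f\in H^s(\R^+)$ with $f(0)=0$ and let $F\in H^s(\R)$ be any extension of $f$; by the trace property $F(0)=f(0)=0$. The key step is the distributional identity
\[
\partial_x\bigl(\chi_{(0,\infty)}F\bigr)=\chi_{(0,\infty)}\,\partial_x F+F(0)\,\delta_0=\chi_{(0,\infty)}\,\partial_x F,
\]
where the boundary term drops out precisely because $F(0)=0$. Applying the norm equivalence to $g=\chi_{(0,\infty)}F$ gives
\[
\|\chi_{(0,\infty)}F\|_{H^s(\R)}\lesssim \|\chi_{(0,\infty)}F\|_{L^2(\R)}+\|\chi_{(0,\infty)}\,\partial_x F\|_{H^{s-1}(\R)}.
\]
The first term is bounded by $\|F\|_{L^2(\R)}$. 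For the second, since $s-1\in(-\frac12,\frac12)$, Lemma \ref{sobolevh0} (applied at regularity $s-1$ to $\partial_x F$) yields $\|\chi_{(0,\infty)}\partial_x F\|_{H^{s-1}(\R)}\lesssim\|\partial_x F\|_{H^{s-1}(\R)}\lesssim\|F\|_{H^s(\R)}$. Combining these and taking the infimum over all extensions $F$ produces $\|\chi_{(0,\infty)}f\|_{H^s(\R)}\lesssim\|f\|_{H^s(\R^+)}$; since $\chi_{(0,\infty)}F$ vanishes on $(-\infty,0)$ and restricts to $f$ on $\R^+$, this is exactly the claimed bound for $\|\chi_{(0,\infty)}f\|_{H_0^s(\R^+)}$.

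Part (a) would then follow from (b) together with the trace argument. For the inclusion $\supseteq$, if $f\in H^s(\R^+)$ with $f(0)=0$, the estimate in (b) shows that the zero-extension $\chi_{(0,\infty)}F$ lies in $H^s(\R)$, vanishes on $(-\infty,0)$, and agrees with $f$ on $\R^+$, so $f\in H_0^s(\R^+)$ by definition. For $\subseteq$, if $f\in H_0^s(\R^+)$ pick an extension $F\in H^s(\R)$ with $F\equiv0$ on $(-\infty,0)$; since $s>\frac12$ the function $F$ is continuous, so $F(0)=0$ and hence $f(0)=0$.

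The main obstacle, and the only genuinely delicate point, is justifying the distributional derivative identity and the accompanying claim that $f(0)$ is well-defined: one must verify that the jump contribution to $\partial_x(\chi_{(0,\infty)}F)$ is exactly $F(0)\,\delta_0$ and that it vanishes under the hypothesis. This is also where the range of $s$ is used sharply: $s>\frac12$ is needed for the trace to exist, so that $f(0)$ has meaning and the Dirac mass is killed, while $s<\frac32$ guarantees $s-1<\frac12$, which is precisely the regime in which Lemma \ref{sobolevh0} applies to $\partial_x F$. Once these points are in place, everything else is a routine chain of estimates.
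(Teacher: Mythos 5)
Your proof is correct; note that the paper itself does not prove Lemma \ref{alta} but defers to Colliander--Kenig \cite{CK}, and your argument --- differentiating once, using $f(0)=0$ (well defined by the trace theorem for $s>\frac12$) to cancel the Dirac mass $F(0)\delta_0$, and invoking the truncation bound of Lemma \ref{sobolevh0} at regularity $s-1\in(-\frac12,\frac12)$ together with the equivalence $\|g\|_{H^s(\R)}\sim\|g\|_{L^2(\R)}+\|\partial_x g\|_{H^{s-1}(\R)}$ --- is essentially the standard argument behind the cited result. The one point you rightly flag as delicate, the identity $\partial_x\bigl(\chi_{(0,\infty)}F\bigr)=\chi_{(0,\infty)}\partial_x F$ for non-smooth $F$, is indeed justified exactly as you suggest: approximate $F$ by smooth functions $F_n\to F$ in $H^s(\R)$, use trace continuity to get $F_n(0)\to 0$, and use the boundedness (hence density-defined continuity) of the truncation operator on $H^{s-1}(\R)$ to pass to the limit in $\partial_x(\chi_{(0,\infty)}F_n)=\chi_{(0,\infty)}\partial_x F_n+F_n(0)\delta_0$.
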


\begin{lemma}\cite[Proposition 2.5]{CK}\label{cut}
	Let $-\infty<s<\infty$ and $f\in  H_0^s(\mathbb{R}^+)$. For the cut-off function $\psi$ defined in \eqref{eq:cutoff}, we have $ \|\psi f\|_{H_0^s(\mathbb{R}^+)}\leq c \|f\|_{H_0^s(\mathbb{R}^+)}$.
\end{lemma}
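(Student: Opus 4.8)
The plan is to reduce the statement to the single fact that multiplication by a fixed smooth, compactly supported function is a bounded operator on $H^s(\R)$ for \emph{every} $s \in \R$, and then to verify that the cut-off preserves the support structure defining $H_0^s(\R^+)$. First I would recall that $H_0^s(\R^+)$ is isometrically identified with the closed subspace of $H^s(\R)$ of distributions supported in $[0,\infty)$: writing $\wt f \in H^s(\R)$ for the extension of $f$ by zero onto the negative axis, the requirement $F(x)=0$ for $x<0$ forces $F=\wt f$, so the infimum in the definition is attained and $\|f\|_{H_0^s(\R^+)} = \|\wt f\|_{H^s(\R)}$. Since $\supp\psi \subset [-2,2]$ and $\supp\wt f \subset [0,\infty)$, the product $\psi\wt f$ is supported in $[0,2]\subset[0,\infty)$ and is exactly the zero extension of $\psi f$; hence $\psi f \in H_0^s(\R^+)$ whenever $\psi\wt f \in H^s(\R)$, with $\|\psi f\|_{H_0^s(\R^+)} = \|\psi\wt f\|_{H^s(\R)}$. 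It therefore suffices to establish the multiplier bound
$$\|\psi g\|_{H^s(\R)} \le c\,\|g\|_{H^s(\R)}, \qquad g \in H^s(\R),$$
with $c=c(s,\psi)$.

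To prove this bound I would pass to the Fourier side, where $\wh{\psi g} = \wh\psi * \wh g$ and the $H^s$ norm is $\|\bra{\xi}^s \,\wh{\cdot}\,\|_{L^2_\xi}$. Invoking Peetre's inequality $\bra{\xi}^s \le 2^{|s|/2}\,\bra{\xi-\eta}^{|s|}\bra{\eta}^s$, valid for all real $s$, I would estimate
$$\bra{\xi}^s\,\abs{(\wh\psi * \wh g)(\xi)} \le 2^{|s|/2}\,\Big( (\bra{\cdot}^{|s|}\abs{\wh\psi}) * (\bra{\cdot}^s\abs{\wh g}) \Big)(\xi),$$
and then apply Young's inequality in the form $L^1 * L^2 \hookrightarrow L^2$ to conclude
$$\|\psi g\|_{H^s(\R)} \le 2^{|s|/2}\,\big\|\bra{\cdot}^{|s|}\wh\psi\big\|_{L^1}\,\|g\|_{H^s(\R)}.$$
Because $\psi \in C_0^\infty(\R)$, its transform $\wh\psi$ is Schwartz, so the weighted norm $\big\|\bra{\cdot}^{|s|}\wh\psi\big\|_{L^1}$ is finite for every $s$, giving the claim with $c = 2^{|s|/2}\big\|\bra{\cdot}^{|s|}\wh\psi\big\|_{L^1}$. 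Combined with the identification of the first paragraph, this yields the desired estimate $\|\psi f\|_{H_0^s(\R^+)} \le c\,\|f\|_{H_0^s(\R^+)}$.

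The one delicate point, and the step I expect to be substantive, is that the estimate must hold for all real $s$ — including negative and fractional orders — so an argument based on the Leibniz rule and a count of derivatives does not apply directly. The Fourier-analytic route circumvents this: the weighted convolution bound treats every $s$ on the same footing through Peetre's inequality, with the compact smoothness of $\psi$ entering solely to guarantee finiteness of the weighted $L^1$ norm of $\wh\psi$. By contrast, the support bookkeeping that secures membership in $H_0^s(\R^+)$ is routine, since multiplication by $\psi$ can only shrink supports.
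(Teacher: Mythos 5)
The paper does not actually prove this lemma --- it is quoted from \cite[Proposition 2.5]{CK} with the proof explicitly omitted --- so there is no in-paper argument to compare against. Your proof is the standard one and its core is correct: the multiplier bound $\|\psi g\|_{H^s(\R)}\le c(s,\psi)\|g\|_{H^s(\R)}$ via Peetre's inequality and Young's convolution inequality is exactly what makes the statement work uniformly for \emph{all} real $s$, and multiplication by $\psi$ manifestly preserves support in $[0,\infty)$. One imprecision should be repaired, though. For $s\le -\tfrac12$ the extension $F\in H^s(\R)$ of $f$ with $F=0$ on $(-\infty,0)$ is \emph{not} unique --- one may add any combination of $\delta_0$ and its derivatives lying in $H^s$ --- and ``extension by zero'' is not a canonically defined operation on distributions of negative order, so your claims that the infimum is attained at $\wt f$ and that $\|f\|_{H_0^s(\R^+)}=\|\wt f\|_{H^s(\R)}$ are not justified (and the second equality is the one your chain of inequalities actually leans on, in the direction $\|\wt f\|_{H^s}\le\|f\|_{H_0^s}$, which could fail if a better extension existed). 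The fix is immediate and does not change the substance: given $\varepsilon>0$, pick any admissible extension $F$ with $\supp F\subset[0,\infty)$ and $\|F\|_{H^s(\R)}\le(1+\varepsilon)\|f\|_{H_0^s(\R^+)}$; then $\psi F$ is supported in $[0,2]$ and restricts to $\psi f$ on $(0,\infty)$, hence is an admissible extension of $\psi f$, so
\[
\|\psi f\|_{H_0^s(\R^+)}\le \|\psi F\|_{H^s(\R)}\le c\,\|F\|_{H^s(\R)}\le c(1+\varepsilon)\|f\|_{H_0^s(\R^+)},
\]
and letting $\varepsilon\to0$ gives the lemma. With that adjustment your argument is complete.
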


\subsection{Solution spaces}\label{sec:sol space}
For $f \in \Sch (\R^2)$, we denote by $\wt{f}$ or $\ft (f)$ the Fourier transform of $f$ with respect to both spatial and time variables
\[\wt{f}(\tau , \xi)=\int _{\R ^2} e^{-ix\xi}e^{-it\tau}f(t,x) \;dxdt .\]
Moreover, we use $\ft_x$ and $\ft_t$ to denote the Fourier transform with respect to space and time variable respectively (also we use $\,\, \wh{\;}\,\,$ for both cases). 

In a celebrated paper in 90's, Bourgain \cite{Bourgain1993} established a way to prove the well-posedness of a classes of dispersive systems. More precisely, on the Sobolev spaces $H^s$, for smaller values of s, Bourgain  found a yet more suitable smoothing property for solutions of the Korteweg-de Vries equation. 

In this spirit, for $s,b\in \mathbb{R}$, we introduce the classical Bourgain spaces $X^{s,b}$ associated to \eqref{fourth} as the completion of $\Sch'(\mathbb{R}^2)$ under the norm
\[\norm{f}_{X^{s,b}}^2 = \int_{\R^2} \bra{\xi}^{2s}\bra{\tau +\xi^4}^{2b}|\wt{f}(\tau,\xi)|^2 \; d\xi d\tau, \]
where $\bra{\cdot} = (1+|\cdot|^2)^{1/2}$.

One of the basic property of $X^{s,b}$ can be read as follows:
\begin{lemma}\cite[Lemma 2.11 ]{Tao2006}\label{lem:Xsb}
Let $\psi(t)$ be a Schwartz function in time. Then, we have 
\[\norm{\psi(t)f}_{X^{s,b}} \lesssim_{\psi,b} \norm{f}_{X^{s,b}}.\]
\end{lemma}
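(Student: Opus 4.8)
The plan is to reduce the statement to the classical fact that multiplication by a Schwartz function is bounded on the one-dimensional Sobolev space $H^b$ in time, after first conjugating away the dispersive weight $\bra{\tau+\xi^4}$ by the free group. I would introduce the (spatially unitary) Fourier multiplier $S(t)=e^{it\px^4}$, whose symbol in $x$ is $e^{it\xi^4}$, and set $g(t,\cdot)=S(t)f(t,\cdot)$. A direct computation of the full spacetime Fourier transform gives $\wt g(\tau,\xi)=\wt f(\tau-\xi^4,\xi)$, so that after the change of variables $\sigma=\tau+\xi^4$ the modulation weight is flattened:
\[
\norm{f}_{X^{s,b}}^2=\int_{\R^2}\bra{\xi}^{2s}\bra{\tau+\xi^4}^{2b}|\wt f(\tau,\xi)|^2\,d\tau\,d\xi=\int_{\R^2}\bra{\xi}^{2s}\bra{\sigma}^{2b}|\wt g(\sigma,\xi)|^2\,d\sigma\,d\xi=\norm{g}_{H^b_tH^s_x}^2 .
\]

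The key observation is that multiplication by $\psi(t)$ commutes with this conjugation: since $\psi(t)$ is scalar multiplication at each fixed time while $S(t)$ acts only in $x$, one has $S(t)\bigl(\psi(t)f(t,\cdot)\bigr)=\psi(t)\,g(t,\cdot)$. Hence $\norm{\psi f}_{X^{s,b}}=\norm{\psi g}_{H^b_tH^s_x}$, and because the spatial weight $\bra{\xi}^s$ likewise commutes with multiplication in $t$, Fubini/Minkowski reduce the whole claim to the purely temporal estimate
\[
\norm{\psi h}_{H^b(\R)}\lesssim_{\psi,b}\norm{h}_{H^b(\R)}
\]
for a scalar function $h$ (applied with $h=\bra{D_x}^s g$ slicewise).

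For this last bound I would argue on the Fourier side. Writing $\wh{\psi h}=\wh\psi*\wh h$ and invoking Peetre's inequality $\bra{\tau}^b\lesssim\bra{\tau-\tau'}^{|b|}\bra{\tau'}^b$, the quantity $\bra{\tau}^b|\wh{\psi h}(\tau)|$ is dominated pointwise by the convolution $(K*F)(\tau)$, where $F(\tau')=\bra{\tau'}^b|\wh h(\tau')|\in L^2$ and $K(\tau)=\bra{\tau}^{|b|}|\wh\psi(\tau)|$. Since $\psi$ is Schwartz, so is $\wh\psi$, whence $K\in L^1$, and Young's inequality $\norm{K*F}_{L^2}\le\norm{K}_{L^1}\norm{F}_{L^2}$ closes the estimate with a constant $\norm{K}_{L^1}$ depending only on $\psi$ and $b$.

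The only genuinely delicate point is uniformity over arbitrary real $b$ — in particular non-integer or negative $b$. For integer $b\ge 0$ the Leibniz rule already suffices, but the Peetre–Young argument is what handles all $b$ at once, and it is precisely there that the Schwartz decay of $\psi$ (ensuring $K\in L^1$) is essential.
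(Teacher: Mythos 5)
Your proof is correct, and the paper offers no proof of its own to compare it against: the lemma is simply quoted from Tao's book (Lemma 2.11 there), where the argument is exactly the one you give --- conjugate by the free evolution to flatten the modulation weight $\bra{\tau+\xi^4}$, reduce slicewise in $\xi$ to the one-dimensional bound $\norm{\psi h}_{H^b_t}\lesssim\norm{h}_{H^b_t}$, and close that with Peetre's inequality plus Young's convolution inequality, using the Schwartz decay of $\wh\psi$ to get $\bra{\cdot}^{|b|}|\wh\psi|\in L^1$. The only cosmetic remark is that with the paper's sign convention $e^{it\px^4}\phi=\frac{1}{2\pi}\int e^{ix\xi}e^{-it\xi^4}\wh\phi(\xi)\,d\xi$ the conjugating multiplier should carry the symbol $e^{+it\xi^4}$, which is what your computation actually uses; this does not affect the argument.
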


Ginibre \textit{et al.} \cite{GTV} while establishing local well-posedness results for the Zakharov system showed the following important estimate:
\begin{lemma}\label{gvt}
	Let $-\frac{1}{2}< b'< b\leq 0$\, or\, $0\leq b'<b<\frac{1}{2}$, $w\in X^{s,b}(\phi)$ and $s\in \mathbb{R}$. Then
	\begin{equation*}
	\|\psi_{T}w\|_{ X^{s,b'}(\phi)}\leq c T^{b-b'}\|w\|_{ X^{s,b}(\phi)}.
	\end{equation*}
\end{lemma}

As is well-known, the space $X^{s,b}$  with $b> \frac12$ is well-adapted to study the IVP of dispersive equations. However, in the study of IBVP, the standard argument cannot be applied directly. This is due to the lack of hidden  regularity, more precisely, the control of (derivatives) time trace norms of the Duhamel boundary operator  requires  to work in $X^{s,b}$-type spaces for $b<\frac12$, since the full regularity range cannot be covered (see Lemma \ref{duhamel} inequality \eqref{derivative1}). 

Therefore, to treat the solution of our problem, set the solution space denoted by $Z^{s,b}$ with the following norm
\[\norm{f}_{Z^{s,b}(\R^2) }= \sup_{t \in \R} \norm{f(t,\cdot)}_{H^s(\R)} + \sum_{j=0}^{1}\sup_{x \in \R} \norm{\px^jf(\cdot,x)}_{H^{\frac{2s+3-2j}{8}}(\R)} + \norm{f}_{X^{s,b}} .\]
 The spatial and time restricted space of $Z^{s,b}(\R^2)$ is defined by the standard way:
\[Z^{s,b}((0 , T)\times \R^+) = Z^{s,b} \Big|_{(0 , T)\times \R^+}\]
equipped with the norm
\[\norm{f}_{Z^{s,b}((0 , T)\times \R^+) } = \inf\limits_{g \in Z^{s,b}} \set{\norm{g}_{Z^{s,b}} : g(t,x) = f(t,x) \; \mbox{ on } \; (0,T) \times \R^+}. \]

\subsection{Riemann-Liouville fractional integral} Before we begin our study of the IBVP for \eqref{fourth}, we just give a brief summary of the Riemann-Liouville fractional integral operator, the reader can see \cite{CK, Holmerkdv} for more details. 

Let us define the function $t_+$ as follows
\[t_+ = t \quad \mbox{if} \quad t > 0, \qquad t_+ = 0  \quad \mbox{if} \quad t \le 0.\]
The tempered distribution $\frac{t_+^{\alpha-1}}{\Gamma(\alpha)}$ is defined like a locally integrable function for Re $\alpha>0$ by
\begin{equation*}
	\left \langle \frac{t_+^{\alpha-1}}{\Gamma(\alpha)},\ f \right \rangle=\frac{1}{\Gamma(\alpha)}\int_0^{\infty} t^{\alpha-1}f(t)dt.
\end{equation*}
It follows that
\begin{equation}\label{gamma}
	\frac{t_+^{\alpha-1}}{\Gamma(\alpha)}=\partial_t^k\left( \frac{t_+^{\alpha+k-1}}{\Gamma(\alpha+k)}\right),
\end{equation}
for all $k\in\mathbb{N}$. Expression  \eqref{gamma} can be used to extend the definition of $\frac{t_+^{\alpha-1}}{\Gamma(\alpha)}$, $\forall \alpha \in \mathbb{C}$ in the sense of distributions. In fact, a change of contour shows that the Fourier transform of $\frac{t_+^{\alpha-1}}{\Gamma(\alpha)}$ is the following one
\begin{equation}\label{transformada}
	\left(\frac{t_+^{\alpha-1}}{\Gamma(\alpha)}\right)^{\widehat{}}(\tau)=e^{-\frac{1}{2}\pi i \alpha}(\tau-i0)^{-\alpha},
\end{equation}
where \begin{equation}\label{transformada2}
(\tau-i0)^{-\alpha} = |\tau|^{-\alpha}\chi_{(0,\infty)}+e^{\alpha \pi i}|\tau|^{-\alpha}\chi_{(-\infty,0)}
\end{equation} 
is the distributional limit. For $\alpha \notin \Z$, by using \eqref{transformada2}, we rewrite \eqref{transformada} in the following way
\begin{equation}\label{transformada1}
	\left(\frac{t_+^{\alpha-1}}{\Gamma(\alpha)}\right)^{\widehat{}}(\tau)=e^{-\frac12 \alpha \pi i}|\tau|^{-\alpha}\chi_{(0,\infty)}+e^{\frac12 \alpha \pi i}|\tau|^{-\alpha}\chi_{(-\infty,0)}.
\end{equation}

For $f\in C_0^{\infty}(\mathbb{R}^+)$, define $	\mathcal{I}_{\alpha}f$ as
\begin{equation*}
	\mathcal{I}_{\alpha}f=\frac{t_+^{\alpha-1}}{\Gamma(\alpha)}*f.
\end{equation*}
Thus, for $\mbox{Re }\alpha>0$, we have
\begin{equation}\label{eq:IO}
	\mathcal{I}_{\alpha}f(t)=\frac{1}{\Gamma(\alpha)}\int_0^t(t-s)^{\alpha-1}f(s) \; ds.
\end{equation}
The following properties easily hold 
\begin{align*}
\mathcal{I}_0f=f, \quad \mathcal{I}_1f(t)=\int_0^tf(s) \; ds, \quad  \mathcal{I}_{-1}f=f' \quad \text{and } \quad \mathcal{I}_{\alpha}\mathcal{I}_{\beta}=\mathcal{I}_{\alpha+\beta}.
\end{align*}
Moreover, the lemmas below can be found in \cite{Holmerkdv}, and we will omit the proofs.
\begin{lemma}\cite[Lemma 2.1]{Holmerkdv}
	If $f\in C_0^{\infty}(\mathbb{R}^+)$, then $\mathcal{I}_{\alpha}f\in C_0^{\infty}(\mathbb{R}^+)$, for all $\alpha \in \mathbb{C}$.
\end{lemma}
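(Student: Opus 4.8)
The plan is to treat first the case $\mathrm{Re}\,\alpha>0$, where $\mathcal{I}_{\alpha}f$ is given by the absolutely convergent integral \eqref{eq:IO}, and then to bootstrap to arbitrary $\alpha\in\mathbb{C}$ using the distributional identity \eqref{gamma} together with the semigroup law $\mathcal{I}_{\alpha}\mathcal{I}_{\beta}=\mathcal{I}_{\alpha+\beta}$. Throughout I use that $\supp f\subset[0,\infty)$ and that differentiation preserves the class $C_0^{\infty}(\R^+)$.

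\emph{Support.} Writing the convolution as $\mathcal{I}_{\alpha}f(t)=\frac{1}{\Gamma(\alpha)}\int_{\R}(t-s)_+^{\alpha-1}f(s)\,ds$, the factor $(t-s)_+^{\alpha-1}$ vanishes unless $s<t$, while $f(s)$ vanishes unless $s\ge 0$. Hence the integrand is supported in $\{0\le s<t\}$, which is empty when $t\le 0$; therefore $\mathcal{I}_{\alpha}f(t)=0$ for all $t\le 0$, and $\supp \mathcal{I}_{\alpha}f\subset[0,\infty)$. The same argument applied to any derivative of $f$ shows every iterate $\mathcal{I}_{\alpha}f^{(n)}$ also vanishes on $(-\infty,0]$.

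\emph{Smoothness for $\mathrm{Re}\,\alpha>0$.} After the change of variables $u=t-s$ I would write, for $t>0$,
\[\mathcal{I}_{\alpha}f(t)=\frac{1}{\Gamma(\alpha)}\int_0^{\infty}u^{\alpha-1}f(t-u)\,du,\]
where the $u$-integral is effectively over the compact set $u\in[0,t]$ because $f(t-u)=0$ for $u>t$. Since $\mathrm{Re}\,\alpha>0$, the weight $u^{\mathrm{Re}\,\alpha-1}$ is integrable near $u=0$, and on each compact $t$-interval both $u^{\alpha-1}f(t-u)$ and its $t$-derivative $u^{\alpha-1}f'(t-u)$ are dominated by a fixed integrable function of $u$. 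Differentiation under the integral sign is therefore justified and yields $\pt \mathcal{I}_{\alpha}f=\mathcal{I}_{\alpha}f'$. As $f'\in C_0^{\infty}(\R^+)$, iterating gives $\pt^{n}\mathcal{I}_{\alpha}f=\mathcal{I}_{\alpha}f^{(n)}$ for every $n$; each such function is continuous on all of $\R$ (the integral over $[0,t]$ tends to $0$ as $t\to 0^+$, giving right-continuity at $0$) and vanishes on $(-\infty,0]$ by the support step. Hence $\mathcal{I}_{\alpha}f\in C_0^{\infty}(\R^+)$.

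\emph{General $\alpha$.} For arbitrary $\alpha\in\mathbb{C}$, choose $k\in\N$ with $\mathrm{Re}(\alpha+k)>0$. Moving derivatives across the convolution in \eqref{gamma}, $\mathcal{I}_{\alpha}f=\big(\pt^k \tfrac{t_+^{\alpha+k-1}}{\Gamma(\alpha+k)}\big)*f=\tfrac{t_+^{\alpha+k-1}}{\Gamma(\alpha+k)}*f^{(k)}=\mathcal{I}_{\alpha+k}(f^{(k)})$ (equivalently $\mathcal{I}_{\alpha}=\mathcal{I}_{\alpha+k}\mathcal{I}_{-k}$ with $\mathcal{I}_{-k}=\pt^k$). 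Since $f^{(k)}\in C_0^{\infty}(\R^+)$ and $\mathrm{Re}(\alpha+k)>0$, the previous case applies and gives $\mathcal{I}_{\alpha}f\in C_0^{\infty}(\R^+)$. I expect the main obstacle to be the differentiation-under-the-integral step when $0<\mathrm{Re}\,\alpha<1$, where the kernel $u^{\alpha-1}$ is singular at $u=0$: the points requiring care are the locally uniform integrable domination that legitimises passing the $t$-derivative inside the integral, and the matching of all derivatives with their zero values across $t=0$, so that the resulting function is genuinely smooth on $\R$ and not merely on $(0,\infty)$.
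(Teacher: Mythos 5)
Your proof is correct, and it is essentially the standard argument: the paper itself omits the proof, deferring to Holmer \cite[Lemma 2.1]{Holmerkdv}, where the lemma is established in the same way you propose --- the absolutely convergent formula \eqref{eq:IO} with differentiation under the integral for $\mathrm{Re}\,\alpha>0$, followed by the reduction $\mathcal{I}_{\alpha}f=\mathcal{I}_{\alpha+k}\bigl(f^{(k)}\bigr)$ via \eqref{gamma} for general $\alpha\in\mathbb{C}$. Your attention to the two delicate points (locally uniform domination near the singular kernel $u^{\alpha-1}$, and the matching of all derivatives to zero across $t=0$, which is legitimate since only values of $f$ on compact intervals $[0,T]$ enter) is exactly what makes the sketch a complete proof.
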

\begin{lemma}\cite[Lemma 5.3]{Holmerkdv}\label{lio}
	If $0\leq \mathrm{Re} \ \alpha <\infty$ and $s\in \mathbb{R}$, then $\|\mathcal{I}_{-\alpha}h\|_{H_0^s(\mathbb{R}^+)}\leq c \|h\|_{H_0^{s+\alpha}(\mathbb{R}^+)}$, where $c=c(\alpha)$.
\end{lemma}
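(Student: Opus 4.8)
The plan is to reduce the estimate to a Fourier multiplier bound on the whole line, combined with the support structure of the spaces $H_0^\sigma(\mathbb{R}^+)$. Recall that a distribution lies in $H_0^\sigma(\mathbb{R}^+)$ precisely when it is the restriction to $\mathbb{R}^+$ of some $F\in H^\sigma(\mathbb{R})$ supported in $[0,\infty)$, the norm being the infimum of $\norm{F}_{H^\sigma(\mathbb{R})}$ over all such extensions. Since $\mathcal{I}_{-\alpha}$ is convolution with $\frac{t_+^{-\alpha-1}}{\Gamma(-\alpha)}$, whose support is $[0,\infty)$, it maps distributions supported in $[0,\infty)$ to distributions supported in $[0,\infty)$; at the smooth level this is exactly the preceding lemma, which yields $\mathcal{I}_{-\alpha}\bigl(C_{0,c}^{\infty}(\mathbb{R}^+)\bigr)\subset C_0^{\infty}(\mathbb{R}^+)$. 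Thus the heart of the matter is the whole-line estimate
\[\norm{\mathcal{I}_{-\alpha}F}_{H^s(\mathbb{R})}\le c(\alpha)\,\norm{F}_{H^{s+\alpha}(\mathbb{R})},\]
which I would prove first and then transfer to the half-line using the density of $C_{0,c}^{\infty}(\mathbb{R}^+)$ in $H_0^{s+\alpha}(\mathbb{R}^+)$.

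For the whole-line estimate I would pass to the Fourier side. From $\mathcal{I}_{-\alpha}F=\frac{t_+^{-\alpha-1}}{\Gamma(-\alpha)}*F$ and formula \eqref{transformada} with $\alpha$ replaced by $-\alpha$, one has $\widehat{\mathcal{I}_{-\alpha}F}(\tau)=e^{\frac12\pi i\alpha}(\tau-i0)^{\alpha}\widehat{F}(\tau)$, so everything reduces to a pointwise bound on the symbol $(\tau-i0)^{\alpha}$. Replacing $\alpha$ by $-\alpha$ in \eqref{transformada2} gives $(\tau-i0)^{\alpha}=|\tau|^{\alpha}\chi_{(0,\infty)}+e^{-\alpha\pi i}|\tau|^{\alpha}\chi_{(-\infty,0)}$, whence
\[|(\tau-i0)^{\alpha}|\le \max\{1,e^{\pi\,\mathrm{Im}\,\alpha}\}\,|\tau|^{\mathrm{Re}\,\alpha}=:c(\alpha)\,|\tau|^{\mathrm{Re}\,\alpha},\]
the unimodular factor $|\tau|^{i\,\mathrm{Im}\,\alpha}$ being harmless. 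Combined with the elementary inequality $|\tau|^{\mathrm{Re}\,\alpha}\le\bra{\tau}^{\mathrm{Re}\,\alpha}$, valid because $\mathrm{Re}\,\alpha\ge 0$, this yields the symbol bound $\bra{\tau}^{2s}|(\tau-i0)^{\alpha}|^{2}\le c(\alpha)^{2}\bra{\tau}^{2(s+\mathrm{Re}\,\alpha)}$ for every $\tau$. Integrating against $|\widehat{F}(\tau)|^{2}$ gives the whole-line estimate at once, the complex order $s+\alpha$ being read as $s+\mathrm{Re}\,\alpha$, since its imaginary part only inserts a unimodular weight.

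To descend to the half-line, for $h\in C_{0,c}^{\infty}(\mathbb{R}^+)$ the function $\mathcal{I}_{-\alpha}h\in C_0^{\infty}(\mathbb{R}^+)$ is its own admissible extension, so $\norm{\mathcal{I}_{-\alpha}h}_{H_0^s(\mathbb{R}^+)}\le\norm{\mathcal{I}_{-\alpha}h}_{H^s(\mathbb{R})}\le c(\alpha)\norm{h}_{H^{s+\alpha}(\mathbb{R})}$, and a density argument then extends the bound to all of $H_0^{s+\alpha}(\mathbb{R}^+)$, provided the right-hand norm can be replaced by $\norm{h}_{H_0^{s+\alpha}(\mathbb{R}^+)}$. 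I expect the main obstacle to be precisely this replacement: the infimum in the definition of the $H_0$ norm makes $\norm{h}_{H^{s+\alpha}(\mathbb{R})}$ and $\norm{h}_{H_0^{s+\alpha}(\mathbb{R}^+)}$ genuinely distinct when $s+\mathrm{Re}\,\alpha<-\frac12$, because then nonzero distributions supported at the origin belong to $H^{s+\alpha}(\mathbb{R})$ and can lower the extension norm. When $s+\mathrm{Re}\,\alpha\ge-\frac12$ the admissible extension is unique and the two norms coincide, closing the argument directly; for the remaining very negative range I would instead argue by duality, writing $H^{s}(\mathbb{R}^+)$ as the dual of $H_0^{-s}(\mathbb{R}^+)$ and using that the formal adjoint of $\mathcal{I}_{-\alpha}$ is again a Riemann–Liouville operator of the same order, so that the estimate already obtained in the positive-regularity regime transfers by duality. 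The symbol computation and the weight inequality are routine; all the real care lies in this matching of the half-line norms.
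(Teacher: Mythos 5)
The paper itself offers no proof of this lemma: it is imported verbatim from \cite[Lemma 5.3]{Holmerkdv} with the proofs explicitly omitted, so your proposal can only be compared with the argument in the cited source. Your core argument coincides with it and is correct: Fourier transform the kernel via \eqref{transformada} to get the symbol $e^{\frac{1}{2}\pi i\alpha}(\tau-i0)^{\alpha}$, bound $|(\tau-i0)^{\alpha}|\le c(\alpha)\,|\tau|^{\mathrm{Re}\,\alpha}\le c(\alpha)\,\langle\tau\rangle^{\mathrm{Re}\,\alpha}$ (this is exactly where $\mathrm{Re}\,\alpha\ge 0$ enters, and the imaginary part of $\alpha$ only contributes the harmless factors $e^{\pi\,\mathrm{Im}\,\alpha}$ and $|\tau|^{i\,\mathrm{Im}\,\alpha}$), observe that convolution with a kernel supported in $[0,\infty)$ preserves supports in $[0,\infty)$, and conclude on the dense class $C_{0,c}^{\infty}(\R^+)$, reading $s+\alpha$ as $s+\mathrm{Re}\,\alpha$.

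Where you depart from the cited proof is the appended treatment of the range $s+\mathrm{Re}\,\alpha<-\frac12$, and there the fix you propose is both unnecessary and, as sketched, not watertight. The worry is real under this paper's literal infimum definition of $H_0^{\sigma}(\R^+)$, but the standard resolution, implicit in \cite{Holmerkdv}, is definitional rather than analytic: one takes $H_0^{\sigma}(\R^+)$ to be the closed subspace of $H^{\sigma}(\R)$ of distributions supported in $[0,\infty)$ (equivalently, the closure of $C_{0,c}^{\infty}(\R^+)$ in $H^{\sigma}(\R)$) with the inherited norm, so that $\mathcal{I}_{-\alpha}$ acts on the distribution itself and your two-step argument already proves the lemma for every $s\in\R$, with no case split. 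Indeed, with the infimum definition the operator is not even well defined on restriction classes in the low range: two supported extensions differing by $\delta_0^{(k)}$ have images differing by $\mathcal{I}_{-\alpha}\delta_0^{(k)}=\partial_t^k\bigl(t_+^{-\alpha-1}/\Gamma(-\alpha)\bigr)$, which is in general nonzero on $(0,\infty)$; so what you are trying to cure by duality is a well-definedness problem, not an estimate problem, and no estimate can repair it. Moreover your duality sketch pairs against the wrong space: to bound the $H_0^{s}$ norm of $\mathcal{I}_{-\alpha}h$ you would need the duality $\bigl(H^{-s}(\R^+)\bigr)^{*}=H_0^{s}(\R^+)$, not the stated $H^{s}(\R^+)=\bigl(H_0^{-s}(\R^+)\bigr)^{*}$, and the pairing $\langle\mathcal{I}_{-\alpha}h,V\rangle$ would again depend on the choice of extension $V$ once distributions with mass at the origin are admissible, reproducing exactly the ambiguity you set out to avoid. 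In short: keep your multiplier-plus-support argument, which is the cited proof, and replace the duality appendix by fixing the convention for $H_0^{\sigma}(\R^+)$.
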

\begin{lemma}\cite[Lemma 5.4]{Holmerkdv}
	If $0\leq \mathrm{Re}\ \alpha <\infty$, $s\in \mathbb{R}$ and $\mu\in C_0^{\infty}(\mathbb{R})$, then
	$\|\mu\mathcal{I}_{\alpha}h\|_{H_0^s(\mathbb{R}^+)}\leq c \|h\|_{H_0^{s-\alpha}(\mathbb{R}^+)},$ where $c=c(\mu, \alpha)$.
\end{lemma}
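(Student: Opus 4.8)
The plan is to reduce everything to a Fourier multiplier estimate on the full line and to isolate the low-frequency singularity of the symbol as the only genuine difficulty. First I would argue by density: since $C_{0,c}^\infty(\mathbb R^+)$ is dense in $H_0^{s-\alpha}(\mathbb R^+)$, it suffices to prove the bound for $h\in C_{0,c}^\infty(\mathbb R^+)$ with a constant depending only on $\alpha,\mu,s$. For such $h$, the lemma quoted just above (that $\mathcal I_\alpha$ maps $C_{0,c}^\infty(\mathbb R^+)$ into $C_0^\infty(\mathbb R^+)$) shows that $\mathcal I_\alpha h$ is smooth and supported in $[0,\infty)$, hence so is $\mu\mathcal I_\alpha h$; being smooth, compactly supported, and supported in $[0,\infty)$, its extension by zero is the canonical one, so $\|\mu\mathcal I_\alpha h\|_{H_0^s(\mathbb R^+)}=\|\mu\mathcal I_\alpha h\|_{H^s(\mathbb R)}$, and likewise $\|h\|_{H_0^{s-\alpha}(\mathbb R^+)}=\|h\|_{H^{s-a}(\mathbb R)}$ with $a:=\mathrm{Re}\,\alpha$ (only the modulus of the weight $\langle\tau\rangle^{s-\alpha}$ enters the $L^2$ norm). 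Thus the goal becomes the multiplier estimate $\|\mu\,\mathcal I_\alpha h\|_{H^s(\mathbb R)}\lesssim\|h\|_{H^{s-a}(\mathbb R)}$, where by \eqref{transformada} the Fourier symbol of $\mathcal I_\alpha$ is $m_\alpha(\tau)=e^{-\frac12\pi i\alpha}(\tau-i0)^{-\alpha}$, with $|m_\alpha(\tau)|\sim|\tau|^{-a}$ by \eqref{transformada2}.

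The crucial preliminary reduction, and the main role of the cut-off $\mu$, is that $\mu\mathcal I_\alpha h$ depends only on $h$ restricted to a fixed compact set. Indeed, by \eqref{eq:IO} the value $\mathcal I_\alpha h(t)=\frac1{\Gamma(\alpha)}\int_0^t(t-s)^{\alpha-1}h(s)\,ds$ for $t\in\mathrm{supp}\,\mu\cap(0,\infty)\subset(0,R]$ (where $\mathrm{supp}\,\mu\subset(-R,R)$) only involves $h(s)$ with $0<s\le R$. Hence $\mu\mathcal I_\alpha h=\mu\mathcal I_\alpha(\chi h)$ for a fixed $\chi\in C_0^\infty(\mathbb R)$ with $\chi\equiv1$ on $[0,R]$, and since multiplication by $\chi$ is bounded on every $H^\sigma(\mathbb R)$ we may assume $h$ is supported in the fixed interval $[0,R+1]$. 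This makes all implicit constants uniform in $h$ and, in particular, $\widehat h$ entire with moments controlled by the weak norm, $|\widehat h^{(k)}(0)|=|\langle(-is)^k\chi,\,h\rangle|\lesssim_k\|h\|_{H^{s-a}(\mathbb R)}$, the pairing being legitimate because $h$ is supported where $\chi\equiv1$.

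Next I would split the symbol into high- and low-frequency pieces, $m_\alpha=m_\alpha\phi_{hi}+m_\alpha\phi_{lo}$, with $\phi_{lo}$ supported in $|\tau|\le2$ and $\phi_{hi}$ in $|\tau|\ge1$. On the high-frequency part the symbol is harmless: $|m_\alpha(\tau)\phi_{hi}(\tau)|\lesssim\langle\tau\rangle^{-a}$, so the Plancherel computation $\int\langle\tau\rangle^{2s}|m_\alpha\phi_{hi}|^2|\widehat h|^2\lesssim\int\langle\tau\rangle^{2(s-a)}|\widehat h|^2$ gives $\|\mathcal F^{-1}[m_\alpha\phi_{hi}\widehat h]\|_{H^s}\lesssim\|h\|_{H^{s-a}}$, after which multiplication by $\mu$ is bounded on $H^s(\mathbb R)$. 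The delicate term is the low-frequency piece, where $m_\alpha\phi_{lo}\sim|\tau|^{-a}$ is singular at $\tau=0$ and fails to lie in $L^2$ once $a\ge\frac12$; this is exactly the obstruction that forces the hypothesis to involve a cut-off instead of following the clean pattern of Lemma \ref{lio}. To resolve it I would Taylor expand $\widehat h$ at the origin, $\widehat h(\tau)=\sum_{k=0}^{M}\frac{\widehat h^{(k)}(0)}{k!}\tau^k+R_M(\tau)$, choosing $M$ with $M+1>a-\frac12$. The remainder obeys $|R_M(\tau)|\lesssim|\tau|^{M+1}$ near $0$, so $|\tau|^{-a}R_M\in L^2(|\tau|\le2)$ and this contribution is controlled in $H^s$ by $\|h\|_{H^{s-a}}$ just as in the high-frequency case. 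Each polynomial term contributes $\widehat h^{(k)}(0)\,\mathcal F^{-1}[m_\alpha\phi_{lo}\tau^k]$, a fixed Schwartz function (the inverse transform of a compactly supported symbol) times the scalar $\widehat h^{(k)}(0)$; after multiplication by $\mu$ its $H^s(\mathbb R)$ norm is a fixed constant times $|\widehat h^{(k)}(0)|\lesssim\|h\|_{H^{s-a}}$ by the moment bound. Summing the finitely many terms completes the estimate.

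I expect the main obstacle to be precisely this low-frequency analysis: controlling the non-$L^2$ singularity $|\tau|^{-a}$ of the symbol at $\tau=0$ when $a\ge\frac12$. The two ideas that overcome it — localizing $h$ to a fixed compact set via $\mu$ (so that the moments of $h$ are bounded by the weak norm $\|h\|_{H^{s-a}}$) and peeling off the finitely many offending Taylor modes of $\widehat h$, which the cut-off then absorbs into fixed functions — are where the hypotheses $\mu\in C_0^\infty(\mathbb R)$ and the support of $h$ are genuinely used. The remaining steps are routine harmonic analysis together with the density argument.
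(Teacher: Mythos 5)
The paper itself contains no proof of this lemma: it is quoted from \cite[Lemma 5.4]{Holmerkdv}, and the authors explicitly state that the proofs of this block of lemmas are omitted, so there is no in-paper argument to compare against. Your proposal is, in substance, a correct and self-contained proof, and its mechanism is the right one: reduce by density to $h\in C_{0,c}^{\infty}(\R^+)$; use causality of the kernel $t_+^{\alpha-1}/\Gamma(\alpha)$ (supported in $[0,\infty)$) to localize $h$ to a fixed compact set, which is indeed the essential role of the compact cut-off $\mu$; split the multiplier $(\tau-i0)^{-\alpha}$ into high and low frequencies; and neutralize the non-$L^2$ singularity $|\tau|^{-a}$, $a=\mathrm{Re}\,\alpha$, at $\tau=0$ by peeling off finitely many Taylor modes of $\wh{h}$, whose coefficients are controlled by $\norm{h}_{H^{s-a}(\R)}$ precisely because $h$ lives in a fixed compact set. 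This also correctly explains why no cut-off is needed in the companion Lemma \ref{lio}, where the symbol is nonsingular.

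Three small repairs are needed, none structural. (i) $\mathcal{F}^{-1}[m_{\alpha}\phi_{lo}\tau^k]$ is \emph{not} a Schwartz function: the symbol is compactly supported but not smooth at $\tau=0$, and for $k<a-1$ it is only a compactly supported distribution; by Paley--Wiener--Schwartz its inverse transform $G_k$ is smooth with polynomial growth but need not decay (compare $\mathcal{F}^{-1}[\chi_{[-1,1]}]$). This costs nothing, since you only use $\mu G_k\in C_c^{\infty}(\R)\subset H^s(\R)$ with a fixed norm, but it should be stated that way. (ii) The remainder bound must be quantified in $h$: you need $|R_M(\tau)|\lesssim |\tau|^{M+1}\norm{h}_{H^{s-a}(\R)}$ on $|\tau|\le 2$, which follows from the integral form of the Taylor remainder together with $\sup_{|\sigma|\le2}|\wh{h}^{(M+1)}(\sigma)|\lesssim\norm{h}_{H^{s-a}(\R)}$; this is the same compact-support pairing you used at $\sigma=0$, applied to $e^{-i\sigma t}(-it)^{M+1}\tilde{\chi}$ uniformly in $|\sigma|\le2$, whereas as written you only control the derivatives at the origin. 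Note also that $M+1>a-\tfrac12$ already forces $M+1>a-1$, so $|\tau|^{-a}R_M$ is locally integrable and the pointwise product agrees with the distributional one when $a\ge1$, a point worth one sentence since for $a\ge 1$ the symbol is a priori only a distribution. (iii) The two claimed norm \emph{equalities} should be the one-sided inequalities you actually need: $\norm{\mu\mathcal{I}_{\alpha}h}_{H_0^{s}(\R^+)}\le\norm{\mu\mathcal{I}_{\alpha}h}_{H^{s}(\R)}$ because the zero extension is admissible, and $\norm{h}_{H^{s-a}(\R)}$ is comparable to the right-hand norm under the convention of \cite{Holmerkdv}, where $H_0^{\sigma}(\R^+)$ is the subspace of $H^{\sigma}(\R)$ of elements supported in $[0,\infty)$; with the infimum-over-extensions definition of Section \ref{sec:pre}, exact equality is convention-dependent when $s-a<-\tfrac12$, since extensions may differ by distributions supported at the origin. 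With these touch-ups your argument is complete and fills a proof the paper defers entirely to the literature.
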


\subsection{Oscillatory integral} In this subsection, we will define the oscillatory integral which one is the key to define, in the next section, the Duhamel boundary forcing operator. Let 
\begin{equation}\label{eq:oscil}
B(x)=\frac{1}{2\pi}\int_{\R}e^{ix\xi}e^{-i \xi^4} \; d\xi.
\end{equation} 
We first calculate $B(0)$. A change of variable ($\eta = \xi^4$), give us the following
\begin{align*}
B(0) &=\frac{1}{2\pi}\int_{\R}e^{-i \xi^4} \;d\xi=\frac{1}{4\pi}\int_{0}^{+\infty}e^{-i \eta}\eta^{-\frac34}\;d\eta.
\end{align*}
Now, a change of contour yields
\begin{align*}
B(0) &= \frac{(-i)^{1-3/4}}{4\pi}\int_{0}^{+\infty}e^{-t}t^{\frac{1}{4}-1}\;dt  =\frac{(-i)^{1/4}}{4\pi}\Gamma\left( \frac14 \right) =-\frac{i^{7/4}}{\pi}\Gamma\left( \frac54 \right). 
\end{align*}


Let us obtain the Mellin transform of $B(x)$.
\begin{lemma} \label{mellin} For Re $\lambda>0$ we have
		\begin{equation}\label{mellin1}
		\int_0^{\infty}x^{\lambda-1}B(x)dx=\frac{\Gamma(\lambda)\Gamma\left(\frac14-\frac{\lambda}{4}\right)}{8\pi}\left(e^{-i\frac{\pi}{8}(1+3\lambda)}+e^{-i\frac{\pi}{8}(1-5\lambda)}\right).
		\end{equation}
\end{lemma}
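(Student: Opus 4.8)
The plan is to reduce the Mellin transform of $B$ to two classical one-dimensional integrals — the cosine Mellin transform and the Gamma integral — after interchanging the $x$- and $\xi$-integrations in the definition \eqref{eq:oscil} of $B(x)$. First, since $(-\xi)^4=\xi^4$, I would symmetrize the defining integral and write
\[B(x)=\frac{1}{2\pi}\int_{\R}e^{ix\xi}e^{-i\xi^4}\,d\xi=\frac{1}{\pi}\int_0^{\infty}\cos(x\xi)\,e^{-i\xi^4}\,d\xi,\]
so that the Mellin variable $x$ couples to $\xi$ only through $\cos(x\xi)$. Substituting this into $\int_0^\infty x^{\lambda-1}B(x)\,dx$ and exchanging the order of integration, the inner $x$-integral becomes the classical cosine Mellin transform
\[\int_0^{\infty}x^{\lambda-1}\cos(x\xi)\,dx=\Gamma(\lambda)\cos\!\left(\frac{\pi\lambda}{2}\right)\xi^{-\lambda},\qquad 0<\mathrm{Re}\,\lambda<1,\]
leaving a single $\xi$-integral $\int_0^\infty \xi^{-\lambda}e^{-i\xi^4}\,d\xi$ to evaluate.

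Next I would compute this integral by the substitution $\eta=\xi^4$, which turns it into $\frac14\int_0^\infty \eta^{\frac{1-\lambda}{4}-1}e^{-i\eta}\,d\eta$, and then apply the Gamma integral
\[\int_0^{\infty}\eta^{\mu-1}e^{-i\eta}\,d\eta=\Gamma(\mu)\,e^{-i\pi\mu/2}\]
with $\mu=\frac{1-\lambda}{4}$; this is the analytic continuation to $z=i$ of $\int_0^\infty \eta^{\mu-1}e^{-z\eta}\,d\eta=\Gamma(\mu)z^{-\mu}$. The result is $\frac14\Gamma\!\left(\frac14-\frac{\lambda}{4}\right)e^{-i\frac{\pi}{8}(1-\lambda)}$, so that
\[\int_0^\infty x^{\lambda-1}B(x)\,dx=\frac{\Gamma(\lambda)\,\Gamma\!\left(\frac14-\frac{\lambda}{4}\right)}{4\pi}\cos\!\left(\frac{\pi\lambda}{2}\right)e^{-i\frac{\pi}{8}(1-\lambda)}.\]
Finally, writing $\cos(\pi\lambda/2)=\tfrac12\big(e^{i\pi\lambda/2}+e^{-i\pi\lambda/2}\big)$ and collecting the exponents reproduces exactly the two phases $-\tfrac{i\pi}{8}(1+3\lambda)$ and $-\tfrac{i\pi}{8}(1-5\lambda)$ of \eqref{mellin1}; this is a short algebraic simplification and the factor $\tfrac14\cdot\tfrac12=\tfrac18$ gives the stated constant.

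The main obstacle is rigorously justifying the interchange of the $x$- and $\xi$-integrations, since the oscillatory integral defining $B$ and the resulting double integral are only conditionally convergent. I would resolve this by inserting regularizing factors — a Gaussian damping $e^{-\varepsilon\xi^4}$ in the $\xi$-variable, together with an Abel factor $e^{-\delta x}$ in $x$ if needed — so that Fubini's theorem applies, carrying out the computation above with $i$ replaced by $\varepsilon+i$, and then letting $\varepsilon,\delta\to 0^+$. Equivalently, the final integral $\int_0^\infty \xi^{-\lambda}e^{-i\xi^4}\,d\xi$ can be evaluated by rotating the contour to the ray $\arg\xi=-\frac{\pi}{8}$, on which $\xi^4=-ir^4$ and hence $e^{-i\xi^4}=e^{-r^4}$ decays, making the integral absolutely convergent. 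All steps are valid in the strip $0<\mathrm{Re}\,\lambda<1$, where both the cosine and the Gamma transforms converge; the identity then extends to all $\mathrm{Re}\,\lambda>0$ by analytic continuation, since both sides are meromorphic in $\lambda$.
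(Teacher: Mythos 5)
Your proposal is correct and follows essentially the same route as the paper: regularize so that Fubini applies, evaluate the Mellin transform in $x$ of the oscillatory exponential by a contour shift, then reduce the remaining $\xi$-integral via $\eta=\xi^4$ and a rotation of contour to a Gamma integral. The only cosmetic difference is that you keep the two half-line frequency contributions together as a single cosine Mellin transform and extract the two phases $e^{-i\frac{\pi}{8}(1+3\lambda)}$ and $e^{-i\frac{\pi}{8}(1-5\lambda)}$ by expanding $\cos(\pi\lambda/2)$ at the end, whereas the paper splits $B=B_1+B_2$ at the outset and obtains one phase from each piece; the computations are identical in substance and your constants and exponents check out.
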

\begin{proof}
By analyticity argument, we can assume that $\lambda$ is a real number in the set $(0,3/8)$.

Consider $$B_1(x)=\frac{1}{2\pi}\int_{0}^{+\infty}e^{i x \xi}e^{-i \xi^4}d\xi$$ and $$B_2(x)=\frac{1}{2\pi}\int_{-\infty}^{0}e^{i x \xi}e^{-i \xi^4}d\xi=\frac{1}{2\pi}\int_0^{\infty}e^{-ix\xi}e^{-i\xi^4}d\xi,$$ then we have $B(x)= B_1(x)+B_2(x)$. Define $$B_{1,\epsilon}(x)=\frac{1}{2\pi}\int_{0}^{+\infty}e^{i x \xi}e^{-i \xi^4}e^{-\epsilon \xi}d\xi.$$ By using dominated convergence theorem and Fubini's theorem we have
\begin{equation}
\begin{split}
\int_{0}^{\infty}x^{\lambda-1}B_1(x)dx&=\lim_{\epsilon \rightarrow 0}\lim_{\delta \rightarrow 0}\int_{0}^{\infty}e^{-\delta x}x^{\lambda-1}B_{1,\epsilon}(x)dx\\
& =\lim_{\epsilon \rightarrow 0}\lim_{\delta \rightarrow 0} \frac{1}{2\pi}\int_{0}^{+\infty}e^{-i \xi^4}e^{-\epsilon \xi} \int_{0}^{\infty}e^{i x \xi}e^{-\delta x}x^{\lambda-1}dx d\xi.
\end{split}
\end{equation}
Using a change of contour, we get that
\begin{equation}
\begin{split}
\int_{0}^{+\infty}e^{i x \xi}e^{-\delta x}x^{\lambda-1}dx=\xi^{-\lambda }e^{i \lambda \frac{\pi}{2}}\Gamma\left(\lambda, -\frac{\delta}{\xi}\right),
\end{split}
\end{equation}
where $\Gamma(\lambda,z)=\int_0^{+\infty}r^{\lambda-1}e^{irz}e^{-r}dr$.
Again, thanks to dominated convergence theorem it follows that
\begin{equation}
\lim_{\delta \rightarrow 0} \int_0^{+\infty}e^{i x \xi}e^{-\delta x}x^{\lambda-1}dx=\xi^{-\lambda }e^{i \lambda \frac{\pi}{2}}\Gamma\left( \lambda\right).
\end{equation}
Once more applying dominated convergence theorem and changing the contour we conclude that
\begin{equation}
\begin{split}
\int_{0}^{+\infty}x^{\lambda-1}B_1(x)dx&=\frac{\Gamma(\lambda)}{2\pi}e^{i\lambda\frac{\pi}{2}}\lim_{\varepsilon \rightarrow 0}\int_0^{+\infty}e^{-i \xi^4}e^{-\epsilon \xi} \xi^{-\lambda}d\xi\\
&=\frac{\Gamma(\lambda)}{2\pi}e^{i\lambda\frac{\pi}{2}}\frac{1}{4}\lim_{\varepsilon \rightarrow 0}\int_{0}^{+\infty}e^{-i\eta}e^{-\epsilon \eta^{1/4} }(\eta)^{-\frac{\lambda+3}{4}}d\eta\\
&=\frac{\Gamma(\lambda)}{2\pi}e^{i\lambda\frac{\pi}{2}}\frac{1}{4}e^{-\frac{\pi i}{2}(\frac{1-\lambda}{4})}\Gamma\left(\frac14-\frac{\lambda}{4}\right)\\
&=\frac{\Gamma(\lambda)\Gamma\left(\frac14-\frac{\lambda}{4}\right)}{8\pi}e^{-i\frac{\pi}{8}(1-5\lambda)}.
\end{split}
\end{equation}

In the similar way, by using the following identity
\begin{equation}
\begin{split}
	\
	\int_{0}^{+\infty}e^{-i x \xi}e^{-\delta x}x^{\lambda-1}dx=\xi^{-\lambda }e^{-i \lambda \frac{\pi}{2}}\Gamma\left(\lambda, \frac{\delta}{\xi}\right),
\end{split}
\end{equation}
we obtain
\begin{equation}
\begin{split}
\int_{0}^{+\infty}x^{\lambda-1}B_2(x)dx&=\frac{\Gamma(\lambda)}{2\pi}e^{-i\lambda\frac{\pi}{2}}\frac{1}{4}e^{-\frac{\pi i}{2}(\frac{1-\lambda}{4})}\Gamma\left(\frac14-\frac{\lambda}{4}\right)\\
&=\frac{\Gamma(\lambda)\Gamma\left(\frac14-\frac{\lambda}{4}\right)}{8\pi}e^{-i\frac{\pi}{8}(1+3\lambda)}.
\end{split}
\end{equation}
Finally, as we can split by $B(x)= B_1(x)+B_2(x)$,  \eqref{mellin1} holds.
\end{proof}

\section{Duhamel boundary forcing operator}\label{sec:Duhamel boundary forcing operator}
In this section, we study the Duhamel boundary forcing operator, which was introduced by Colliander and Kenig \cite{CK}, in order to construct the solution to \eqref{fourth} forced by boundary conditions. We also refer the following works \cite{Cavalcante,CC,HolmerNLS} for a well exposition about this topic.
\subsection{Duhamel boundary forcing operator class}

Let us introduce the Duhamel boundary forcing operator associated to the linearized biharmonic Schr\"odinger equation. Consider
\begin{equation}\label{eq:M}
M = \frac{1}{ B(0)\Gamma(3/4)}.
\end{equation} 
For $f\in C_0^{\infty}(\mathbb{R}^+)$, define the boundary forcing operator $\mathcal{L}^0$ (of order $0$) as
\begin{equation}\label{eq:BFO}	
\mathcal{L}^0f(t,x):=M\int_0^te^{i(t-t')\partial_x^4}\delta_0(x)\mathcal{I}_{-\frac34}f(t')dt',
\end{equation}
where $e^{it\partial_x^4}$ denotes the group associated to \eqref{eq:linear 4kdv} given by 
\begin{align*}
e^{it\partial_x^4}\psi(x)=\frac{1}{2\pi}\int_{\mathbb{R}} e^{i x \xi}e^{-it\xi^4}\hat{\psi}(\xi)d\xi.
\end{align*}
Note that the property of convolution operator ($\partial_x (f * g) = (\partial_xf) * g = f * (\partial_xg)$) and the integration by parts in $t'$ of \eqref{eq:BFO} yield that
\begin{equation}\label{eq:BFO1}
i\mathcal{L}^0(\partial_t f)(t,x) = iM\delta_0(x)\mathcal{I}_{-\frac34}f(t) + \partial_x^4\mathcal{L}^0f(t,x).
\end{equation}

By a change of variable and using \eqref{eq:oscil}, we get that
\begin{equation}\label{forcing}
\begin{split}
	\mathcal{L}^0f(t,x)&=M\int_0^te^{i(t-t')\partial_x^4}\delta_0(x)\mathcal{I}_{-\frac34}f(t')dt'\\
	&= M \int_0^t B\left(\frac{x}{(t-t')^{1/4}}\right)\frac{\mathcal{I}_{-\frac34}f(t')}{(t-t')^{1/4}}dt'.
\end{split}	
\end{equation}
We are now in position to make it precise when the boundary forcing term is continuous or discontinuous. More precisely, the following lemma holds.

\begin{lemma}\label{continuity}
	Let $f\in C_{0,c}^{\infty}(\mathbb{R}^+)$.
	\begin{itemize}
\item[(a)] For fixed $ 0 \le t \le 1$, $\partial_x^k \mathcal{L}^0f(t,x)$, $k=0,1,2$, is continuous in $x \in \mathbb{R}$ and has the decay property in terms of the spatial variable as follows:
\begin{equation}\label{eq:decay1}
|\partial_x^k \mathcal{L}^0f(t,x)| \lesssim_{N} \norm{f}_{H^{N+k}}\bra{x}^{-N}, \quad N \ge 0.
\end{equation}
\item[(b)] For fixed $ 0 \le t \le 1$, $\partial_x^3\mathcal{L}^0f(t,x)$ is continuous in $x$ for $x\neq 0$ and it is discontinuous at $x =0$ satisfying
\[\lim_{x\rightarrow 0^{-}}\partial_x^3\mathcal{L}^0f(t,x)=-i\frac{M}{2}\mathcal{I}_{-3/4}f(t),\ \lim_{x\rightarrow 0^{+}}\partial_x^3\mathcal{L}^0f(t,x)=i\frac{M}{2}\mathcal{I}_{-3/4}f(t).\] $\partial_x^3\mathcal{L}^0f(t,x)$ also has the decay property in terms of the spatial variable
\begin{equation}\label{eq:decay2}
|\partial_x^3\mathcal{L}^0f(t,x)| \lesssim_{N} \norm{f}_{H^{N+3}}\bra{x}^{-N}, \quad N \ge 0.
\end{equation}
	\end{itemize}
\end{lemma}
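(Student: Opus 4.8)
The plan is to reduce everything to two properties of the oscillatory kernel $B$ in \eqref{eq:oscil} and then feed them into the time integral \eqref{forcing}. First I would record the structural facts about $B$ and its derivatives $B^{(k)}$. Since $\xi^4$ is even, the substitution $\xi\mapsto-\xi$ shows $B$ is even, so $B^{(k)}(-y)=(-1)^kB^{(k)}(y)$; in particular $B^{(3)}$ is odd and $B^{(k)}(0)=\frac{i^k}{2\pi}\int_{\R}\xi^k e^{-i\xi^4}\,d\xi$ vanishes for odd $k$. Next, a stationary phase analysis of $B^{(k)}(y)=\frac{1}{2\pi}\int_{\R}(i\xi)^k e^{i(y\xi-\xi^4)}\,d\xi$ at the critical point $\xi_c=(y/4)^{1/3}$ of the phase yields the size bound $|B^{(k)}(y)|\lesssim\langle y\rangle^{(k-1)/3}$, together with the oscillatory factor $e^{ic|y|^{4/3}}$ that will be needed for $k=3$. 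The Mellin transform from Lemma \ref{mellin} gives an independent and exact route to the same asymptotics and will also serve to evaluate the constant in the jump.

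For the continuity I would work from $\partial_x^k\mathcal{L}^0f(t,x)=M\int_0^t u^{-(k+1)/4}B^{(k)}\!\big(x u^{-1/4}\big)\,\mathcal{I}_{-3/4}f(t-u)\,du$, obtained from \eqref{forcing} by differentiating under the integral sign. For $k=0,1,2$ the exponent $(k+1)/4<1$ is integrable at $u=0$, and combining it with $|B^{(k)}(xu^{-1/4})|\lesssim\langle xu^{-1/4}\rangle^{(k-1)/3}$ produces, for $x$ in any bounded set, a dominating function that is a sum of negative powers of $u$ with exponents exceeding $-1$ (namely $-(k+1)/4$ and $-(2k+1)/6$), hence integrable on $[0,t]$. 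Dominated convergence then gives continuity of $\partial_x^k\mathcal{L}^0f(t,\cdot)$ on all of $\R$ and, simultaneously, the $L^\infty_x$ bound needed for small $|x|$ in \eqref{eq:decay1}. For $k=3$ the exponent equals $1$ and this crude bound is no longer integrable at $u=0$; however, for fixed $x\neq0$ the argument $xu^{-1/4}\to\infty$ and the non-stationary phase of $B^{(3)}$ supplies the extra decay that restores convergence and continuity away from the origin, which is exactly the dichotomy asserted in (b).

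The heart of the matter is the jump at $x=0$. For $x>0$ I would change variables $y=xu^{-1/4}$ in the $k=3$ integral to get $\partial_x^3\mathcal{L}^0f(t,x)=4M\int_{x t^{-1/4}}^{\infty}\frac{B^{(3)}(y)}{y}\,\mathcal{I}_{-3/4}f\big(t-x^4y^{-4}\big)\,dy$. Near $y=0$ the integrand is bounded because $B^{(3)}(0)=0$ forces $B^{(3)}(y)/y$ to have a finite limit, while at $y=\infty$ the integral converges only conditionally through the oscillation $e^{icy^{4/3}}$; after justifying the passage to the limit (an integration by parts in $y$ to tame the tail), letting $x\to0^+$ gives $4M\,\mathcal{I}_{-3/4}f(t)\int_0^{\infty}\frac{B^{(3)}(y)}{y}\,dy$. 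The remaining constant I would evaluate by Plancherel, writing $\int_0^{\infty}\frac{B^{(3)}(y)}{y}\,dy=\tfrac12\int_{\R}\frac{B^{(3)}(y)}{y}\,dy$ (valid since $B^{(3)}$ is odd) and pairing $\widehat{B^{(3)}}(\xi)=(i\xi)^3e^{-i\xi^4}$ against the Fourier transform of $\mathrm{p.v.}\,1/y$; an Abel-regularized evaluation of $\int_0^\infty\xi^3 e^{-i\xi^4}\,d\xi$ then fixes the explicit constant and, after the sign bookkeeping, yields the limit $i\frac{M}{2}\,\mathcal{I}_{-3/4}f(t)$. For $x<0$ the oddness $B^{(3)}(-y)=-B^{(3)}(y)$ flips the sign, giving $-i\frac{M}{2}\,\mathcal{I}_{-3/4}f(t)$, which is precisely (b).

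Finally, for the rapid spatial decay in \eqref{eq:decay1}–\eqref{eq:decay2} at large $|x|$, I would return to the frequency representation $\partial_x^k\mathcal{L}^0f(t,x)=\frac{M}{2\pi}\int_{\R}(i\xi)^k e^{ix\xi}G(t,\xi)\,d\xi$, where $G(t,\xi)=\int_0^t e^{-iu\xi^4}\mathcal{I}_{-3/4}f(t-u)\,du$, and integrate by parts $N$ times in $\xi$ to extract the factor $|x|^{-N}$. The resulting $\xi$-integral of $\partial_\xi^N[(i\xi)^kG]$ is rendered convergent by repeatedly integrating $G$ by parts in $u$: each step gains a factor $\xi^{-4}$ at the cost of one more derivative of $\mathcal{I}_{-3/4}f$, and the boundary values produced are controlled, via Sobolev embedding, by $\|f\|_{H^{N+k}}$; together with the $|x|\lesssim1$ bound from the previous step this gives the stated $\langle x\rangle^{-N}$ estimate. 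I expect the main obstacle to be the jump computation: both the rigorous justification of the limit $x\to0^+$ against an only conditionally convergent oscillatory tail and the exact evaluation of $\int_0^\infty B^{(3)}(y)/y\,dy$ with the correct constant demand careful regularization, and it is this computation—rather than the continuity or the decay—that pins down the precise jump $\pm i\frac{M}{2}\,\mathcal{I}_{-3/4}f(t)$.
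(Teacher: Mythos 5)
Your proposal is sound in outline but follows a genuinely different route from the paper's. The paper's proof is a two-line sketch: continuity for $k=0,1,2$ is read off from \eqref{forcing}, the decay \eqref{eq:decay1} is delegated to Holmer's argument, and the jump of $\partial_x^3\mathcal{L}^0f$ is extracted from the distributional identity \eqref{eq:BFO1}, which exhibits $\partial_x^4\mathcal{L}^0f$ as a continuous function plus the multiple $-iM\,\mathcal{I}_{-3/4}f(t)\,\delta_0(x)$; the coefficient of the delta is the jump, and the evenness of $B$ splits it into the two one-sided limits. You instead compute everything from the kernel: stationary-phase bounds $|B^{(k)}(y)|\lesssim\bra{y}^{(k-1)/3}$ give continuity for $k\le 2$, the change of variables $y=xu^{-1/4}$ plus a Plancherel evaluation of $\int_0^\infty B^{(3)}(y)/y\,dy$ gives the jump, and frequency-side integration by parts gives the decay. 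Your route is more self-contained and actually determines which sign goes with which side; the paper's route gets the jump for free from \eqref{eq:BFO1} and avoids ever touching the conditionally convergent $k=3$ integral.

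Three soft spots. First, your phrase ``the non-stationary phase of $B^{(3)}$ supplies the extra decay'' is wrong as a mechanism: the phase $y\xi-\xi^4$ has a real critical point for every $y$, so $B^{(3)}(y)\sim |y|^{2/3}e^{ic|y|^{4/3}}$ \emph{grows}; for fixed $x\neq0$ the integral $\int_0^t u^{-1}B^{(3)}(xu^{-1/4})\mathcal{I}_{-3/4}f(t-u)\,du$ is only conditionally convergent, so both the validity of differentiating under the integral sign at $k=3$ and the continuity away from $x=0$ require one integration by parts against the oscillation \emph{before} any dominated-convergence argument — you do this for the jump but not for the continuity claim. Second, in the decay argument the boundary terms produced by integrating $G(t,\xi)$ by parts in $u$ carry factors $\xi^{-4(j+1)}$ that are singular at $\xi=0$, so you must cut off to $|\xi|\gtrsim1$ and treat low frequencies separately. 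Third, the ``sign bookkeeping'' you defer is the only nontrivial content of the constant: carrying out your own Plancherel computation ($\widehat{B^{(3)}}(\xi)=(i\xi)^3e^{-i\xi^4}$ paired against $-i\pi\,\sgn\xi$, then $\int_0^\infty\xi^3e^{-i\xi^4}d\xi=-i/4$) gives $\lim_{x\to0^+}\partial_x^3\mathcal{L}^0f(t,x)=-i\frac{M}{2}\mathcal{I}_{-3/4}f(t)$, which agrees with what \eqref{eq:BFO1} predicts but is the opposite of the sign printed in the lemma (and asserted in your conclusion); you should resolve this discrepancy explicitly rather than declare agreement.
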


\begin{proof}
In fact, the continuity of  $\partial_x^k \mathcal{L}^0f(t,x)$ follows from \eqref{forcing}, for $k=0,1,2$ and the proof of \eqref{eq:decay1} exactly follows the idea  introduced by Holmer in  \cite[Lemma 12]{HolmerNLS}.  Moreover, \eqref{eq:decay1} and \eqref{eq:BFO1} yield that $\partial_x^4\mathcal{L}^0f(t,x)$ is discontinuous only at $x =0$ of size $M\mathcal{I}_{-\frac34}f(t)$ (where $M$ is defined as \eqref{eq:M}), and the decay bounds \eqref{eq:decay2} holds.  
	\end{proof}
	\begin{remark} Lemma \ref{continuity} ensures that $\mathcal{L}^0f(t,0) = f(t)$.
	\end{remark}
We are now in position to generalize the boundary forcing operator \eqref{eq:BFO}. Firstly, for $\mbox{Re}\ \lambda > -4$ and given $g\in C_0^{\infty}(\mathbb{R}^+)$, we define
\begin{equation}\label{eq:BFOC}
\mathcal{L}^{\lambda}g(t,x)=\left[\frac{x_{-}^{\lambda-1}}{\Gamma(\lambda)}*\mathcal{L}^0\big(\mathcal{I}_{-\frac{\lambda}{4}}g\big)(t, \cdot)   \right](x),
\end{equation}
where $*$ denotes the convolution operator and $\frac{x_{-}^{\lambda-1}}{\Gamma(\lambda)}=\frac{(-x)_+^{\lambda-1}}{\Gamma(\lambda)}$.  In particular, for Re $\lambda>0$, we have
 \begin{equation}\label{forcing2}
\mathcal{L}^{\lambda}g(t,x)=\frac{1}{\Gamma(\lambda)}\int_{x}^{\infty}(y-x)^{\lambda-1}\mathcal{L}^0\big(\mathcal{I}_{-\frac{\lambda}{4}}g\big)(t,y)dy.
\end{equation}
Property of convolution operator ($\partial_x^4 (f * g) = (\partial_x^4f) * g = f * (\partial_x^4g)$) and \eqref{eq:BFO1} give us
\begin{equation}\label{classe22}
\begin{aligned}
\mathcal{L}^{\lambda}g(t,x)&=\left[\frac{x_{-}^{(\lambda+4)-1}}{\Gamma(\lambda+4)}*\px^4\mathcal{L}^0\big(\mathcal{I}_{-\frac{\lambda}{4}}g\big)(t, \cdot)   \right](x)\\
&=iM\frac{x_{-}^{(\lambda+4)-1}}{\Gamma(\lambda+4)}\mathcal{I}_{-\frac{3}{4}-\frac{\lambda}{4}}g(t)+i\int_{x}^{\infty}\frac{(y-x)^{(\lambda+4)-1}}{\Gamma(\lambda+4)}\mathcal{L}^0\big(\partial_t\mathcal{I}_{-\frac{\lambda}{4}}g\big)(t,y)dy,
\end{aligned}
\end{equation}
for $\mbox{Re}\  \lambda > -4$, where $M$ is defined as \eqref{eq:M}.  From \eqref{eq:BFO1} and \eqref{eq:BFOC}, we have
\begin{equation*}
(i\partial_t-\partial_x^4)\mathcal{L}^{\lambda}g(t,x)=iM\frac{x_{-}^{\lambda-1}}{\Gamma(\lambda)}\mathcal{I}_{-\frac{3}{4}-\frac{\lambda}{4}}g(t),
\end{equation*}\color{black}
in the distributional sense. 

To finish this subsection, we will give two lemmas concerning  the spatial continuity and decay properties of the $\mathcal{L}^{\lambda}g(t,x)$ and the explicit values for $\mathcal{L}^{\lambda}f(t,0)$, respectively. 
\begin{lemma}\label{holmer1}
	Let $g\in C_0^{\infty}(\mathbb{R}^+)$ and $M$ be as in \eqref{eq:M}. Then, we have
	\begin{equation}\label{eq:relation}
\mathcal{L}^{-k}g=\partial_x^k\mathcal{L}^{0}\mathcal{I}_{\frac{k}{4}}g, \qquad k=0,1,2,3.
	\end{equation}
	Moreover, $\mathcal{L}^{-3}g(t,x)$ is continuous in $x \in \R \setminus \set{0}$ and has a step discontinuity  at $x=0$. For real $\lambda$ satisfying $\lambda>-3$, $\mathcal{L}^{\lambda}g(t,x)$ is continuous in $x \in\mathbb{R}$. For $-3\leq\lambda\leq 1$ and  $0\leq t\leq 1$, $\mathcal{L}^{\lambda}g(t,x)$ satisfies the following decay bounds:
	\begin{align*}
	&|\mathcal{L}^{\lambda}g(t,x)|\leq c_{\lambda,g}\langle x\rangle^{\lambda-1}, \quad \text{for all} \quad  x\geq 0\\
	\intertext{and}
	&|\mathcal{L}^{\lambda}g(t,x)|\leq c_{m,\lambda,g}\langle x\rangle^{-m}, \quad \text{for all} \quad x\geq 0 \quad \text{and} \quad m\geq0.
	\end{align*}
\end{lemma}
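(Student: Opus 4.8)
The plan is to derive the whole statement from the single factorization \eqref{eq:relation}, which collapses the generalized operator $\mathcal{L}^{-k}$ into spatial derivatives of the order-zero operator $\mathcal{L}^0$ already analyzed in Lemma \ref{continuity}, and then to handle non-integer $\lambda$ through the convolution representations \eqref{forcing2} and \eqref{classe22}. So the first task is to establish \eqref{eq:relation}. Setting $\lambda=-k$ in the definition \eqref{eq:BFOC} gives $\mathcal{L}^{-k}g(t,x)=\bigl[\tfrac{x_-^{-k-1}}{\Gamma(-k)}*\mathcal{L}^0(\mathcal{I}_{k/4}g)(t,\cdot)\bigr](x)$, since $\mathcal{I}_{-\lambda/4}=\mathcal{I}_{k/4}$ at $\lambda=-k$. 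Iterating the distributional identity \eqref{gamma} (written for $t_+$) together with $\tfrac{t_+^{-1}}{\Gamma(0)}=\delta_0$ identifies $\tfrac{t_+^{-k-1}}{\Gamma(-k)}=\partial_t^k\delta_0$; substituting $t=-x$ and recording the reflection, $\tfrac{x_-^{-k-1}}{\Gamma(-k)}$ is a $k$-th derivative of $\delta_0$. Since convolution against $\partial_x^k\delta_0$ acts exactly as $\partial_x^k$, and since $\mathcal{L}^0(\mathcal{I}_{k/4}g)(t,\cdot)$ is smooth with rapid spatial decay by \eqref{eq:decay1}, the convolution is classically defined and produces $\mathcal{L}^{-k}g=\partial_x^k\mathcal{L}^0\mathcal{I}_{k/4}g$ for $k=0,1,2,3$.

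Next I would read off the continuity statements from \eqref{eq:relation} and \eqref{classe22}. For $\lambda=-3$ the identity gives $\mathcal{L}^{-3}g=\partial_x^3\mathcal{L}^0\mathcal{I}_{3/4}g$, so Lemma \ref{continuity}(b) immediately yields that it is continuous in $x\in\R\setminus\set{0}$ and has a step discontinuity at $x=0$. For real $\lambda>-3$ I would instead use the representation \eqref{classe22}, valid for $\mbox{Re}\,\lambda>-4$, which splits $\mathcal{L}^\lambda g$ into a boundary term proportional to $\tfrac{x_-^{\lambda+3}}{\Gamma(\lambda+4)}\mathcal{I}_{-3/4-\lambda/4}g(t)$ and an integral term with kernel $(y-x)^{\lambda+3}$ against $\mathcal{L}^0(\partial_t\mathcal{I}_{-\lambda/4}g)$. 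The boundary term is continuous in $x$ precisely when the exponent $\lambda+3>0$ (it degenerates to a step exactly at $\lambda=-3$, which is consistent with the previous paragraph), while the integral term is continuous for all $\lambda>-4$ because its kernel is locally integrable and the integrand is continuous with rapid decay by \eqref{eq:decay1}. Hence $\mathcal{L}^\lambda g(t,\cdot)$ is continuous on all of $\R$ whenever $\lambda>-3$.

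For the decay bounds with $-3\le\lambda\le1$ and $0\le t\le1$, I would feed the spatial decay estimates \eqref{eq:decay1}--\eqref{eq:decay2} for $\mathcal{L}^0$ into the representations \eqref{forcing2}/\eqref{classe22} and estimate the resulting $y$-integral, splitting it into the near region $|y-x|\lesssim\langle x\rangle$ and its complement. The rapid decay of the oscillatory profile $B$ transmitted through $\mathcal{L}^0$ governs one region and yields the bound $|\mathcal{L}^\lambda g(t,x)|\lesssim_{m,\lambda,g}\langle x\rangle^{-m}$ for every $m\ge 0$, whereas the singular convolution kernel $\tfrac{x_-^{\lambda-1}}{\Gamma(\lambda)}$ governs the other region and produces the polynomial bound $|\mathcal{L}^\lambda g(t,x)|\lesssim_{\lambda,g}\langle x\rangle^{\lambda-1}$. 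Uniformity in $t\in[0,1]$ follows from $\mathcal{I}_{-\frac34-\frac\lambda4}g$, $\mathcal{I}_{-\frac\lambda4}g$ being bounded on $[0,1]$ for $g\in C_0^\infty(\R^+)$. This is exactly the scheme carried out by Holmer in \cite{HolmerNLS,Holmerkdv} for KdV-type boundary forcing, and I would follow it verbatim with $B$ in place of the Airy profile.

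The main obstacle I anticipate is twofold. First, the bookkeeping at negative integer $\lambda$: carefully justifying the passage from the formal convolution against $\tfrac{x_-^{-k-1}}{\Gamma(-k)}$ to the operator $\partial_x^k$, keeping track of the reflection sign coming from $x_-=(-x)_+$ and of the distributional differentiation in \eqref{gamma}. Second, and more seriously, obtaining the decay estimates \emph{uniformly up to the endpoints} $\lambda=-3$ and $\lambda=1$, where the convolution kernel $x_-^{\lambda-1}$ becomes borderline non-integrable and the near-region estimate must be handled with extra care; this is where the decay property \eqref{eq:decay2} of $\partial_x^3\mathcal{L}^0$ at the discontinuity, rather than merely \eqref{eq:decay1}, becomes essential.
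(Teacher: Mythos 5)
Your proposal is correct and follows essentially the same route as the paper, which itself only sketches the argument by citing Holmer: identify $\frac{x_-^{-k-1}}{\Gamma(-k)}$ with a $k$-th derivative of $\delta_0$ to get \eqref{eq:relation}, read the continuity and step-discontinuity off Lemma \ref{continuity} and the representation \eqref{classe22}, and obtain the decay bounds by splitting the convolution integral against the rapidly decaying profile $\mathcal{L}^0(\cdot)$. The only caveat, inherited from the paper's own statement rather than introduced by you, is that the reflection actually produces $(-1)^k\partial_x^k\delta_0$, so \eqref{eq:relation} holds up to a sign $(-1)^k$ (as in Holmer's original lemma); this is harmless for all the qualitative conclusions.
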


\begin{proof}
We give below the sketch of the proof. The detailed argument can be found in \cite{Holmerkdv}. By using \eqref{classe22}, we have  that \eqref{eq:relation} follows. Moreover, Lemma \ref{continuity} together with \eqref{eq:relation} guarantee the continuity (except for $x = 0$ when $\lambda = -3$) and discontinuity at $x =0$ of $\mathcal{L}^{\lambda}g$ for $\lambda \ge -3$ and $\lambda = -3$, respectively. The proof of decay bounds can be obtained by using \eqref{classe22}, \eqref{eq:BFO1} and Lemma \ref{continuity}. 
\end{proof}
\begin{lemma}\label{trace1}
	 For $\mbox{Re}\ \lambda>  -4$ and $f\in C_0^{\infty}(\R^+)$, we have the following value of $\mathcal{L}^{\lambda}f(t,0)$:
	\begin{equation}\label{lr0}
	\mathcal{L}^{\lambda}f(t,0)=\frac{M}{8\ }f(t)\left(\frac{e^{-i\frac{\pi}{8}(1+3\lambda)}+e^{-i\frac{\pi}{8}(1-5\lambda)}}{\sin(\frac{1-\lambda}{4}\pi)}\right).
	\end{equation}
	\end{lemma}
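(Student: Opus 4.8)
The plan is to compute the boundary trace $\mathcal{L}^{\lambda}f(t,0)$ directly from the convolution representation of $\mathcal{L}^{\lambda}$ in \eqref{forcing2}, reducing everything to a spatial integral that the Mellin transform of $B$ from Lemma \ref{mellin} can evaluate. First I would use analyticity in $\lambda$ to assume $\mbox{Re}\,\lambda$ lies in a convenient strip where \eqref{forcing2} converges absolutely, so that
\[
\mathcal{L}^{\lambda}f(t,0)=\frac{1}{\Gamma(\lambda)}\int_{0}^{\infty}y^{\lambda-1}\,\mathcal{L}^0\big(\mathcal{I}_{-\frac{\lambda}{4}}f\big)(t,y)\,dy,
\]
and the final formula \eqref{lr0} will then follow for all $\mbox{Re}\,\lambda>-4$ by analytic continuation. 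The key reduction is to insert the explicit form of $\mathcal{L}^0$ from \eqref{forcing}, namely $\mathcal{L}^0 h(t,y)=M\int_0^t B\!\pl \frac{y}{(t-t')^{1/4}}\pr \frac{\mathcal{I}_{-3/4}h(t')}{(t-t')^{1/4}}\,dt'$, and exchange the order of integration so that the $y$-integral becomes a Mellin transform of $B$.

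The main step is the $y$-integration. After swapping integrals, the inner integral is $\int_0^\infty y^{\lambda-1} B\!\pl \frac{y}{(t-t')^{1/4}}\pr dy$; substituting $x=y/(t-t')^{1/4}$ produces a factor $(t-t')^{\lambda/4}$ together with exactly the Mellin integral $\int_0^\infty x^{\lambda-1}B(x)\,dx$ evaluated in Lemma \ref{mellin}. This turns the bracketed $\lambda$-dependence into
\[
\frac{\Gamma(\lambda)\Gamma\!\pl\frac14-\frac{\lambda}{4}\pr}{8\pi}\pl e^{-i\frac{\pi}{8}(1+3\lambda)}+e^{-i\frac{\pi}{8}(1-5\lambda)}\pr,
\]
and the leading $\Gamma(\lambda)$ cancels the $1/\Gamma(\lambda)$ prefactor in \eqref{forcing2}. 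What remains in the $t$-variable is $\int_0^t (t-t')^{\lambda/4-1/4}\,\mathcal{I}_{-3/4}\mathcal{I}_{-\lambda/4}f(t')\,dt'$, which up to constants is a Riemann--Liouville integral: recognizing $\mathcal{I}_{\lambda/4}\big(\mathcal{I}_{-3/4}\mathcal{I}_{-\lambda/4}f\big)=\mathcal{I}_{-3/4}f$ via the composition law $\mathcal{I}_\alpha \mathcal{I}_\beta=\mathcal{I}_{\alpha+\beta}$ collapses the whole time integral back to $f(t)$ (the $\mathcal{I}_{-3/4}$ is absorbed by the constant $M=1/(B(0)\Gamma(3/4))$, recalling $B(0)=-\frac{i^{7/4}}{\pi}\Gamma(5/4)$ computed in the text).

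Finally I would collect constants, using the reflection formula $\Gamma\!\pl\frac14-\frac\lambda4\pr\Gamma\!\pl\frac34+\frac\lambda4\pr=\pi/\sin\!\pl \frac{1-\lambda}{4}\pi\pr$ to convert the Gamma factors into the $\sin\!\pl\frac{1-\lambda}{4}\pi\pr$ in the denominator of \eqref{lr0}, and verify that all stray factors of $8\pi$ and the phase from $B(0)$ cancel to leave the clean factor $M/8$. The main obstacle I anticipate is bookkeeping: tracking the fractional phases $e^{\pm i\cdot}$ and the $\Gamma$-function constants so that the cancellations with $B(0)$, $\Gamma(3/4)$, and the $t$-integration all conspire to the stated normalization, and ensuring the interchange of integrations and the time-convolution collapse are justified before invoking analytic continuation in $\lambda$.
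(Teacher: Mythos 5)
Your proposal follows essentially the same route as the paper: reduce by analyticity to a real $\lambda$ in a convenient range, use the representation \eqref{forcing2} with \eqref{forcing}, apply Fubini and the scaling substitution to produce the Mellin transform of $B$ from Lemma \ref{mellin}, and finish with the reflection formula. One bookkeeping slip should be corrected: the remaining time integral $\int_0^t (t-t')^{\frac{\lambda-1}{4}}\,\mathcal{I}_{-\frac34}\mathcal{I}_{-\frac{\lambda}{4}}f(t')\,dt'$ equals $\Gamma\bigl(\tfrac{\lambda+3}{4}\bigr)\,\mathcal{I}_{\frac{\lambda+3}{4}}\bigl(\mathcal{I}_{-\frac34}\mathcal{I}_{-\frac{\lambda}{4}}f\bigr)=\Gamma\bigl(\tfrac{\lambda}{4}+\tfrac34\bigr)f(t)$, not $\mathcal{I}_{\frac{\lambda}{4}}$ of that quantity; with the correct order the composition collapses exactly to $f(t)$, the factor $\Gamma\bigl(\tfrac{\lambda}{4}+\tfrac34\bigr)$ pairs with $\Gamma\bigl(\tfrac14-\tfrac{\lambda}{4}\bigr)$ via reflection to give $\pi/\sin\bigl(\tfrac{1-\lambda}{4}\pi\bigr)$, and nothing needs to be ``absorbed by $M$'' --- an operator $\mathcal{I}_{-3/4}$ cannot be cancelled against a constant, and indeed $M$ survives verbatim in \eqref{lr0}.
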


\begin{proof}
By using formula \eqref{classe22} we get
	\[\mathcal{L}^{\lambda}f(t,0)
	=i\int_{0}^{\infty}\frac{y^{(\lambda+4)-1}}{\Gamma(\lambda+4)}\mathcal{L}^0\big(\pt \mathcal{I}_{-\frac{\lambda}{4}}f\big)(t,y)dy.\]
This show that	 $	\mathcal{L}^{\lambda}f(t,0)$  is analytic, in $\lambda$, for $\mbox{Re}\ \lambda> -4$.

By analyticity argument, it suffices to consider the case when $\lambda$ is a positive real number and \eqref{forcing}, where $M$ is defined as in \eqref{eq:M}. In fact, in order to use \eqref{mellin1}, we  take $\lambda \in (0,3/8)$ in \eqref{forcing2}. Thus, in the calculations,  we use the representation \eqref{forcing2} for $\lambda > 0$. Fubini's Theorem, the change of variable, \eqref{mellin} and \eqref{eq:IO}, yield that
	\[\begin{split}
	\mathcal{L}^{\lambda}f(t,0)&=\frac{M}{\Gamma(\lambda)}\int_0^{\infty}y^{\lambda-1}\int_0^t B\left(\frac{y}{(t-t')^{1/4}}\right)\frac{\mathcal{I}_{\frac{-\lambda - 3}{4}}f(t')}{(t-t')^{1/4}}\;dt'dy\\
	&=\frac{M}{\Gamma(\lambda)}\int_0^t (t-t')^{\frac{\lambda + 3}{4}-1}\mathcal{I}_{\frac{-\lambda - 3}{4}}f(t') \int_0^{\infty} y^{\lambda-1}B(y) \; dy dt'\\
	&=\frac{M}{\Gamma(\lambda)}\Gamma\left(\frac{\lambda}{4} + \frac34\right)f(t)\frac{\Gamma(\lambda)\Gamma\left(\frac14-\frac{\lambda}{4}\right)}{8\pi}\left(e^{-i\frac{\pi}{8}(1+3\lambda)}+e^{-i\frac{\pi}{8}(1-5\lambda)}\right)\\
		&=\frac{M}{8\ }f(t)\left(\frac{e^{-i\frac{\pi}{8}(1+3\lambda)}+e^{-i\frac{\pi}{8}(1-5\lambda)}}{\sin (\frac{1-\lambda}{4}\pi)}\right),
	\end{split}\]
where in the last equality we used the fact that $$\Gamma(z)\Gamma(1-z)=\frac{\pi}{\sin\ \pi z}.$$
Thus, the proof is complete.	\end{proof}

\subsection{Construction of the solution}\label{section4}
Let us describe how we can construct the solution for the linear fourth order Schr\"odinger equation
\begin{equation}\label{eq:lin.fourth}
i\partial_t u -\partial_x^4u =0.
\end{equation}
\subsubsection{Linear version}
First, we define the unitary group associated to \eqref{eq:lin.fourth} as
\begin{equation*}
e^{it\partial_x^4}\phi(x)=\frac{1}{2\pi}\int_{\R} e^{ix\xi}e^{-it\xi^4}\hat{\phi}(\xi)d\xi,
\end{equation*}
which allows 
\begin{equation}\label{linear}
\begin{cases}
(i\partial_t-\partial_x^4)e^{it\partial_x^4}\phi(x) =0,& (t,x)\in\mathbb{R}\times\mathbb{R},\\
e^{it\partial_x^4}\phi(x)\big|_{t=0}=\phi(x),& x\in\mathbb{R}.
\end{cases}
\end{equation}
Recall $\mathcal{L}^{\lambda}$ in \eqref{classe22} for the right half-line problem. Let
\begin{align*}
u(t,x)= \mathcal{L}^{\lambda_1}\gamma_1(t,x)+\mathcal{L}^{\lambda_2}\gamma_2(t,x),
\end{align*}
and
\begin{align*}
\partial_xu(t,x)= \mathcal{L}^{\lambda_1-1}\mathcal{I}_{-\frac14}\gamma_1(t,x)+\mathcal{L}^{\lambda_2-1}\mathcal{I}_{-\frac14}\gamma_2(t,x),
\end{align*}
where $\gamma_j$ ($j=1,2$) will be chosen later in terms of the given boundary data $f$ and $g$. 

Let $a_j$ and $b_j$ be constants depending on $\lambda_j$, $j=1,2$, given by
\begin{equation}\label{eq:entries}
a_j = \frac{M}{8\ }\left(\frac{e^{-i\frac{\pi}{8}(1+3\lambda_{j})}+e^{-i\frac{\pi}{8}(1-5\lambda_j)}}{\sin(\frac{1-\lambda_{j}}{4}\pi)}\right) \quad \text{and}\quad b_j =\frac{M}{8\ }\left(\frac{e^{-i\frac{\pi}{8}(-2+3\lambda)}+e^{-i\frac{\pi}{8}(6-5\lambda)}}{\sin(\frac{2-\lambda_j}{4}\pi)}\right).
\end{equation}
By Lemmas \ref{holmer1} and \ref{trace1}, we get
\begin{equation}\label{eq:f0}
f(t)= u(t,0) = a_1\gamma_1(t)+ a_2\gamma_2(t)
\end{equation}
and
\begin{equation}\label{eq:g0}
g(t)=\partial_x u(t,0) =b_1\mathcal{I}_{-\frac14}\gamma_1(t)+b_2\mathcal{I}_{-\frac14}\gamma_2(t).
\end{equation}
Thanks to \eqref{eq:f0} and \eqref{eq:g0}, we can write a matrix in the following form 
\[ \left[\begin{array}{c}
f(t) \\
\mathcal{I}_{\frac14}g(t)  \end{array} \right]=
A
 \left[\begin{array}{c}
\gamma_1(t)\\
\gamma_2(t)   \end{array} \right], \]
where
\[A(\lambda_1,\lambda_2)=
\left[\begin{array}{cc}
a_1 & a_2 \\
	b_1 & b_2 \end{array} \right].\]
Choosing appropriate $\lambda_j$, $j=1,2$, such that $A$ is invertible, we have that $u$ solves
\begin{equation}\label{forcante00}
\begin{cases}
(i\partial_t-\partial_x^4)u(t,x)= iM\frac{x_{-}^{\lambda_1-1}}{\Gamma(\lambda_1)}\mathcal{I}_{-\frac{3}{4}-\frac{\lambda}{4}}\gamma_1(t)+iM\frac{x_{-}^{\lambda_2-1}}{\Gamma(\lambda)}\mathcal{I}_{-\frac{3}{4}-\frac{\lambda}{4}}\gamma_2(t), & (t,x)\in\mathbb{R}^+\times\mathbb{R}, \\
u(0,x) = 0,& x\in\mathbb{R}, \\
u(t,0) = f(t), \; \px u(t,0) = g(t) ,& t\in\mathbb{R}^+.
\end{cases}
\end{equation}
By restriction of the  function $u$ on the set $\R^+\times\R^+$,  we can construct a solution for the linear fourth order dispersive equation \eqref{eq:lin.fourth} posed on the right half-line.

\subsubsection{Nonlinear version}\label{section4-1}
Now, we define the classical Duhamel inhomogeneous solution operator $\mathcal{D}$ by
\begin{equation*}
\mathcal{D}w(t,x)=-i\int_0^te^{i(t-t')\partial_x^4}w(t',x)dt'.
\end{equation*}
It follows that
\[\left \{
\begin{array}{l}
(i\partial_t-\partial_x^4)\mathcal{D}w(t,x) =w(t,x),\ (t,x)\in\mathbb{R}\times\mathbb{R},\\
\mathcal{D}w(x,0) =0,\ x\in\mathbb{R}.
\end{array}
\right.\]
Let 
\[u(t,x)= \mathcal{L}^{\lambda_1}\gamma_1(t,x)+\mathcal{L}^{\lambda_2}\gamma_2(t,x)+ e^{it\px^4}\phi(x) + \mathcal{D}w.\]
Similarly as it was done in Subsection \ref{section4}, taking  $\gamma_1$ and $\gamma_2$ appropriate depending of $f$, $g$, $e^{it\px^4}\phi(x)$ and $\mathcal{D}w$, we see that $u$ solves
\begin{equation}\label{sys4NLS}
\begin{cases}
(i\partial_t-\partial_x^4)u(t,x)= w(t,x), & (t,x)\in \R^+\times\R^+\\
u(0,x) = \phi(x), & x\in \R^+\\
u(t,0) = f(t), \; \px u(t,0) = g(t),  & t\in \R^+.
\end{cases}
\end{equation}
This discussion about the structure of the system \eqref{sys4NLS}  can be found in section \ref{sec:main proof 1} and, in this moment, will be omitted.

\section{Energy estimates}\label{sec:energy}
The main purpose of this section is to prove the energy estimate of the solutions of the fourth order nonlinear Schr\"odinger equation in the Bourgain spaces $X^{s,b}$.

\begin{lemma}\label{grupo}
	Let $s\in\mathbb{R}$ and $b \in \R$. If $\phi\in H^s(\mathbb{R})$, then the following estimates holds
	\begin{equation}\label{space}
	\|\psi(t)e^{it\partial_x^4}\phi(x)\|_{C_t\big(\mathbb{R};\,H_x^s(\mathbb{R})\big)}\lesssim_{\psi} \|\phi\|_{H^s(\mathbb{R})};
	\end{equation}
	\begin{equation}\label{derivative}
	\|\psi(t) \partial_x^{j}e^{it\partial_x^4}\phi(x)\|_{C_x\left(\mathbb{R};H_t^{\frac{2s+3-2j}{8}}(\mathbb{R})\right)}\lesssim_{\psi, s, j} \|\phi\|_{H^s(\mathbb{R})}, \quad j\in\{0,1\};
	\end{equation}
	\begin{equation}\label{bourgain}
	\|\psi(t)e^{it\partial_x^4}\phi(x)\|_{X^{s,b}}\lesssim_{\psi, b}  \|\phi\|_{H^s(\mathbb{R})}.
	\end{equation}
Estimates \eqref{space}, \eqref{derivative} and \eqref{bourgain} are so called space traces, derivative time traces and Bourgain spaces estimates, respectively.
\end{lemma}
\begin{proof}
The proofs of \eqref{space} and \eqref{bourgain} are standard and the proof of \eqref{derivative} follows from the smoothness of $\psi$ and the local smoothing estimate \eqref{eq:lsm}, thus we will omit the details.
\end{proof}

\begin{lemma}\label{duhamel}
Let $0 < b < \frac12 $ and $j=0,1$, we have the following inequalities
\begin{equation}\label{space1}
\|\psi(t)\mathcal{D}w(t,x)\|_{C\big(\mathbb{R}_t;\,H^s(\mathbb{R}_x)\big)}\lesssim \|w\|_{X^{s,-b}},
\end{equation}
for $s\in \mathbb{R}$; 
\begin{equation}\label{derivative1}
		\|\psi(t) \partial_x^j\mathcal{D}w(t,x)\|_{C\left(\mathbb{R}_x;H^{\frac{2s+3-2j}{8}}(\mathbb{R}_t)\right)}\lesssim	\|w\|_{X^{s,-b}},
\end{equation}
for $-\frac32+j<s<\frac12+j$;
\begin{equation}\label{bourgain1}
\|\psi(t) \partial_x^j\mathcal{D}w(t,x)\|_{X^{s,b}}\lesssim	\|w\|_{X^{s,-b}},
\end{equation}
for $s\in \mathbb{R}$. 

Estimates \eqref{space1}, \eqref{derivative1} and \eqref{bourgain1} are so called space traces, derivative time traces and Bourgain spaces estimates, respectively.
\end{lemma}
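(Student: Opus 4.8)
The plan is to pass to the space--time Fourier side and exploit the explicit multiplier of $\mathcal{D}$. Writing $\sigma=\tau+\xi^4$ for the modulation variable and computing the time integral in $\mathcal{D}w(t,x)=-i\int_0^t e^{i(t-t')\partial_x^4}w(t',x)\,dt'$, one finds that the spatial Fourier transform of $\mathcal{D}w$ is
\[
\widehat{\mathcal{D}w}(t,\xi)=-\frac{1}{2\pi}\int_{\mathbb{R}}\frac{e^{it\tau}-e^{-it\xi^4}}{\tau+\xi^4}\,\widetilde{w}(\tau,\xi)\,d\tau .
\]
I would then insert a smooth cutoff $a(\sigma)$ with $a\equiv1$ on $\{|\sigma|\le1\}$ and $\mathrm{supp}\,a\subset\{|\sigma|\le2\}$, and split $\mathcal{D}w=\mathcal{D}_{\mathrm{low}}w+\mathcal{D}_{\mathrm{high}}w$ accordingly. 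The whole strategy is to reduce each piece either to the free-group estimates already proved in Lemma \ref{grupo} or to one elementary Fourier-multiplier bound; the restriction $b<\tfrac12$ enters precisely in making the relevant $\tau$-integrals converge.

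For the low-modulation part, on the support of $a$ the factor $\tfrac{e^{it\tau}-e^{-it\xi^4}}{\tau+\xi^4}=e^{-it\xi^4}\tfrac{e^{it\sigma}-1}{\sigma}$ is regular, and I would Taylor-expand $\tfrac{e^{it\sigma}-1}{\sigma}=\sum_{k\ge1}\tfrac{(it)^k\sigma^{k-1}}{k!}$. This expresses $\psi(t)\mathcal{D}_{\mathrm{low}}w$ as a series $\sum_{k\ge1}c_k\,t^k\psi(t)\,e^{it\partial_x^4}\phi_k$ of modulated free solutions, where $\widehat{\phi_k}(\xi)=\tfrac{1}{2\pi}\int a(\sigma)\sigma^{k-1}\widetilde{w}\,d\tau$. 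A Cauchy--Schwarz estimate in $\tau$ using $\langle\sigma\rangle\sim1$ on the support gives $\|\phi_k\|_{H^s}\lesssim C^k(2k-1)^{-1/2}\|w\|_{X^{s,-b}}$, and since $t^k\psi(t)$ is again an admissible Schwartz cutoff, applying the three parts of Lemma \ref{grupo} termwise and summing the factorially convergent series closes this case for all three norms.

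For the high-modulation part I would write $\mathcal{D}_{\mathrm{high}}w=h+e^{it\partial_x^4}\phi_\ast$, where $\widetilde{h}(\tau,\xi)=-\tfrac{1}{2\pi}\tfrac{1-a(\sigma)}{\sigma}\widetilde{w}(\tau,\xi)$ and $\widehat{\phi_\ast}(\xi)=\tfrac{1}{2\pi}\int\tfrac{1-a(\sigma)}{\sigma}\widetilde{w}\,d\tau$. The genuinely inhomogeneous term $h$ gains one factor of $\langle\sigma\rangle^{-1}$, hence $\|h\|_{X^{s,1-b}}\lesssim\|w\|_{X^{s,-b}}$; since $1-b>\tfrac12$ this controls the Bourgain norm \eqref{bourgain1} after applying Lemma \ref{lem:Xsb}, and also the space trace \eqref{space1} via the embedding $X^{s,1-b}\hookrightarrow C(\mathbb{R}_t;H^s_x)$. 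The free piece $e^{it\partial_x^4}\phi_\ast$ is handled by Lemma \ref{grupo} once $\|\phi_\ast\|_{H^s}\lesssim\|w\|_{X^{s,-b}}$, which follows from Cauchy--Schwarz because $\int_{|\sigma|\ge1}\langle\sigma\rangle^{2b}\sigma^{-2}\,d\sigma<\infty$ exactly when $b<\tfrac12$.

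The delicate point will be the derivative time-trace estimate \eqref{derivative1} for the inhomogeneous term $\partial_x^j h$, and this is where the hypothesis $-\tfrac32+j<s<\tfrac12+j$ is forced. Indeed that range is equivalent to $0<\tfrac{2s+3-2j}{8}<\tfrac12$, i.e.\ the time-trace Sobolev exponent lies strictly between $0$ and $\tfrac12$; only in this window can one localize in time with the cutoff $\psi$ and apply the sharp restriction/extension lemmas (Lemmas \ref{sobolevh0} and \ref{sobolev0}) without picking up boundary obstructions, while simultaneously using the $\langle\sigma\rangle^{-1}$ gain to absorb the negative exponent $-b$. I expect this estimate, rather than the space-trace or Bourgain bounds, to require the most care, and it is precisely the mechanism behind the remark preceding the lemma that the full regularity range cannot be covered when $b<\tfrac12$.
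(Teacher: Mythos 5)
Your overall architecture matches the paper's: the same Fourier representation of $\mathcal{D}w$, the same low/high modulation splitting by a bump function in $\sigma=\tau+\xi^4$, the Taylor expansion of $(e^{it\sigma}-1)/\sigma$ turning the low-modulation piece into a factorially convergent series of modulated free evolutions handled termwise by Lemma \ref{grupo}, and a residual free piece $e^{it\partial_x^4}\phi_\ast$ with $\|\phi_\ast\|_{H^s}\lesssim\|w\|_{X^{s,-b}}$ by Cauchy--Schwarz, which is exactly where $b<\tfrac12$ enters. Your treatment of \eqref{space1} and \eqref{bourgain1} is correct; for the high-modulation part of \eqref{space1} the paper bounds $\|\langle\xi\rangle^{s}\mathcal{F}[\psi\mathcal{D}w]\|_{L^2_\xi L^1_\tau}$ using the $L^1$-integrability of $\widehat{\psi}$ rather than the embedding $X^{s,1-b}\hookrightarrow C(\R_t;H^s_x)$ combined with Lemma \ref{lem:Xsb}, but both routes work.

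The genuine gap is \eqref{derivative1} for the high-modulation inhomogeneous term (your $h$, the paper's $w_2$). You explicitly defer it and offer only a heuristic: that $-\tfrac32+j<s<\tfrac12+j$ is equivalent to the time-trace exponent $\tfrac{2s+3-2j}{8}$ lying in $(0,\tfrac12)$, so that Lemmas \ref{sobolevh0} and \ref{sobolev0} can be applied. The arithmetic is right, but that is not the mechanism: no half-line restriction/extension lemma is needed at this point, since everything lives on all of $\R_t$. What is actually required is a quantitative weighted bound. After Fourier inversion in $x$, the substitution $\eta=\xi^4$ and Cauchy--Schwarz reduce the claim (for $j=0$) to showing
\[
G(\tau)=\int_{\eta}\langle\tau+\eta\rangle^{2b-2}\,|\eta|^{-\frac34}\,\langle\eta\rangle^{-\frac{s}{2}}\,d\eta\;\lesssim\;\langle\tau\rangle^{-\frac{2s+3}{4}},
\]
and this is proved by a case analysis over $|\tau|<1$ versus $|\tau|\ge 1$, and within the latter over the regions $|\eta|\le 1$, $2|\eta|\le|\tau|$, $|\tau|\le 2|\eta|$; the lower bound $s>-\tfrac32$ and the upper bound on $s$ are each consumed in specific regions of this analysis. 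Since \eqref{derivative1} is precisely the sharp hidden-regularity estimate on which the half-line construction hinges --- it is the only one of the three bounds with a restricted range of $s$ --- the mere gain of $\langle\sigma\rangle^{-1}$ you invoke does not by itself produce a time-trace bound in $H^{\frac{2s+3-2j}{8}}(\R_t)$ uniform in $x$: you must actually establish the decay of $G(\tau)$ above, and that step is missing from your argument.
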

\begin{proof}
The idea to prove this lemma follows a variation of the proof due Kenig \textit{et al.} in \cite{KPV1991}. Here, we will give the sketch of the proof for sake of completeness.

\vspace{0.2cm}

\noindent\textbf{Estimate \eqref{space1}}:

By using $2\chi_{(0,t)}(t')=\text{sgn}t'+\sgn(t-t')$, $\widehat{\text{sgn}}(\tau)=\text{p.v.} \frac{2}{i\tau}$ and $e^{i\tau\xi^4}\hat{f}(\tau)=\hat{f}(\tau+\xi^4)$ we have
\begin{equation}\label{eq:duhamel fourier(a)}
\psi(t)\mathcal{D}w(t,x) = c\int e^{ix\xi}e^{-it\xi^4}\psi(t) \int \wt{w}(\tau',\xi) \frac{e^{it(\tau'+\xi^4) }-1}{(\tau' + \xi^4)} \; d\tau'd\xi.
\end{equation}
We denote by $w = w_1 + w_2$, where 
\[\wt{w}_1(\tau,\xi) = \eta_0(\tau+\xi^4)\wt{w}(\tau,\xi),\] and 
\[\wt{w}_2(\tau,\xi) = (1-\eta_0(\tau+\xi^4))\wt{w}(\tau,\xi).\]
Here,  $\eta_0: \R \to [0,1]$ is a smooth bump function supported in $ [-2,2]$ and equal to $1$ in $[-1,1]$.
For $w_1$, we use the  Taylor expansion of $e^x$ at $x =0$. Then, we can rewrite \eqref{eq:duhamel fourier(a)} for $w_1$ as
\[
\begin{aligned}
\psi(t)\mathcal{D}w(t,x) &= c\int e^{ix\xi}e^{-it\xi^4}\psi(t) \int \wt{w}_1(\tau',\xi) \frac{e^{it(\tau'+\xi^4) }- 1}{(\tau' + \xi^4)} \; d\tau'd\xi\\
&=c\sum_{k=1}^{\infty}\frac{i^{k-1}}{k!}\psi^k(t) \int e^{ix\xi}e^{-it\xi^4}\wh{F}_1^k(\xi)\; d\xi\\
&=c\sum_{k=1}^{\infty}\frac{i^{k-1}}{k!}\psi^k(t)e^{it\partial_x^4}F_1^k(x),
\end{aligned}
\]
where $\psi^k(t) = t^k\psi(t)$ and 
\begin{equation}\label{eq:linear estimate0}
\wh{F}_1^k(\xi) = \int \wt{w}_1(\tau,\xi) (\tau+\xi^4)^{k-1} \; d\tau.
\end{equation}
Since
\begin{equation}\label{eq:linear estimate}
\norm{F_1^k}_{H^s} = \left(\int \bra{\xi}^{2s} \left| \int \wt{w}_1(\tau,\xi) (\tau+\xi^4)^{k-1} \; d\tau \right|^2 \; d\xi \right)^{\frac12} \lesssim \norm{w}_{X^{s,-b}},
\end{equation}
we have from \eqref{space} that
\[
\norm{\psi(t) \mathcal{D}w(t,x)}_{C_tH^s} \lesssim \sum_{k=1}^{\infty}\frac{1}{k!}\norm{F_1^k}_{H_x^s} \lesssim \norm{w}_{X^{s,-b}}.
\]
For $w_2$, a direct calculation gives
\begin{equation}\label{eq:duhamel fourier}
\ft[\psi\mathcal{D}w](\tau,\xi) = c\int \wt{w}_2(\tau',\xi) \frac{\wh{\psi}(\tau-\tau') - \wh{\psi}(\tau + \xi^4)}{(\tau' + \xi^4)} \; d\tau'.
\end{equation}
Since $\norm{\psi \mathcal{D}w}_{C_tH^s} \lesssim \norm{\bra{\xi}^s\ft[\psi\mathcal{D}w](\tau,\xi)}_{L_{\xi}^2L_{\tau}^1}$, it suffices to bound the following term
\begin{equation}\label{eq:a.1}
\left(\int \bra{\xi}^{2s}\left|\int |\wt{w}_2(\tau',\xi)| \int \frac{|\wh{\psi}(\tau-\tau') - \wh{\psi}(\tau + \xi^4)|}{|\tau' + \xi^4|} \; d\tau d\tau' \right|^2 \; d\xi \right)^{\frac12},
\end{equation}
due to \eqref{eq:duhamel fourier}. We use the $L^1$ integrability of $\wh{\psi}$, to bound \eqref{eq:a.1} by
\[
c \left(\int \bra{\xi}^{2s}\left|\int_{|\tau'+\xi^4| > 1} \frac{|\wt{w}_2(\tau',\xi)|}{|\tau' + \xi^4|} d\tau' \right|^2 \; d\xi \right)^{\frac12} \lesssim \norm{w}_{X^{s,-b}}.
\]

\vspace{0.2cm}

\noindent\textbf{Estimate \eqref{derivative1}}:
 
We only  consider the case $j=0$, since the estimate for $j=1$ is a direct consequence of the case $j=0$. Initially, take $\theta(\tau)\in C^{\infty}(\mathbb{R})$ such that $\theta(\tau)=1$ for $|\tau|<\frac{1}{2}$ and supp $ \theta \subset[-\frac{2}{3},\frac{2}{3}]$. A standard calculation gives 
 \begin{eqnarray*}
 	& &\mathcal{F}_x\left( \psi(t)\int_0^te^{(t-t')\partial_x^4}w(x,t')\right)(\xi)=c\psi(t)\int_{\tau}\frac{e^{it\tau}-e^{-it\xi^4}}{\tau+\xi^4}\wt{w}(\xi,\tau)d\tau
 	\\
 	& &\quad=c \psi(t)e^{it\xi^4}\int_{\tau}\frac{e^{-it(\tau+\xi^4)}-1}{\tau+\xi^4}\theta(\tau+\xi^4)\wt{w}(\xi,\tau)d\tau+c\psi(t)\int_{\tau}e^{it\tau}\frac{1-\theta(\tau+\xi^4)}{\tau+\xi^4}\wt{w}(\xi,\tau)d\tau\\
 	& &\quad \quad -c\psi(t)e^{it\xi^4}\int_{\tau}\frac{1-\theta(\tau+\xi^4)}{\tau+\xi^4}\wt{w}(\xi,\tau)d\tau:=\mathcal{F}_xw_1+\mathcal{F}_xw_2-\mathcal{F}_xw_3.
 \end{eqnarray*}
 By the power series expansion for $e^{-it(\tau+\xi^4)}$, we have $$w_1(x,t)=\displaystyle\sum_{k=1}^{\infty}\frac{\psi_k(t)}{k!}e^{it\partial_x^4}\phi_k(x).$$ 
 Here, $\psi_k(t)=i^kt^k\theta(t)$ and $$\hat{\phi}_k(\xi)=\int_{\tau}(\tau+\xi^4)^{k-1}\theta(\tau+\xi^4)\wt{w}(\xi,\tau)d\tau.$$ By using  \eqref{derivative}, it suffices to show that $\|\phi_k\|_{H^s(\mathbb{R})}\leq c\|u\|_{X^{s,-b}}$, for $b<\frac12$. Using the definition of $\phi_k$ and the Cauchy-Schwarz inequality, follows  that
 \begin{eqnarray*}
 	\|\phi_k\|_{H^s(\mathbb{R}_x)}^2&=&c\int_{\xi}\langle \xi \rangle^{2s}\left( \int_{\{\tau:|\tau+\xi^4|\leq\frac{2}{3}\}}\sum_{k=1}^{\infty}(\tau+\xi^4)^{k-1}\theta(\tau+\xi^4)\wt{u}(\xi,\tau)\right)^2d\xi\\
 	&\leq&c\int_{\xi}\langle \xi \rangle^{2s}\int_{\tau}\langle \tau+\xi^4\rangle^{2c}|\wt{u}(\xi,\tau)|^2d\tau d\xi.
 \end{eqnarray*}
 This completes the estimate of $w_1$.  Now we treat $w_2$. By using the change of variable $\eta=\xi^4$ and Cauchy-Schwarz inequality we obtain
 \begin{equation*}
	\|w_2\|_{C\big(\mathbb{R}_x;\,H^{\frac{2s+3}{8}}(\mathbb{R}_t)\big)}^2\leq c \int_{\tau}\langle \tau\rangle^{\frac{2s+3}{4}}G(\tau)\int_{\xi}\langle \tau+\xi^4\rangle^{-2b}\langle\xi\rangle^{2s}|\wt{w}_2(\xi,\tau)|^2d\xi d\tau,
 \end{equation*}
 where $G(\tau)= c\int_{\eta}\langle \tau+\eta\rangle^{-2+2b}|\eta|^{-\frac{3}{4}}\langle\eta\rangle^{-s/2} d\eta.$ To conclude the estimate of $w_2$,
we need to prove the following estimate
\begin{equation}\label{obj}
G(\tau)\leq c \langle\tau \rangle^{-\frac{2s+3}{4}}.
\end{equation}
We split it in two cases. In the first case, we consider $|\tau|<1$. For this, we use $\langle \tau+\eta\rangle \sim \langle \eta\rangle$ to get
\begin{equation*}
G(\tau)\leq c \int \langle \eta \rangle^{-2+2b-\frac{s}{2}}|\eta|^{-3/4}d\eta.
\end{equation*}
The above integral is bounded in the case $s>-\frac72+4b$, since $-b>-\frac12$. Also, this estimate is valid for $s>-\frac32$.

 Now, the second case $|\tau|\geq 1$ can be estimated by separating the integral in three regions  
 $|\eta|\leq 1,\ 2|\eta|\leq |\tau|,\ |\tau|\leq 2|\eta|$ and using that $-\frac{3}{2}<s\leq \frac{1}{2}$, so \eqref{obj} follows. 
 
 
Finally, to bound $w_3$, let us rewrite $w_3$ like $w_3=\psi(t)e^{it\partial_x^4}\phi(x)$, where $$\hat{\phi}(\xi)=\int\frac{1-\theta(\tau+\xi^4)}{\tau+\xi^4}\wt{w}(\xi,\tau)d\tau.$$ Thanks to \eqref{derivative} and Cauchy-Schwarz inequality, we obtain
 \begin{eqnarray*}
	\|w_3\|^2_{C\left(\mathbb{R}_x;H^{\frac{2s+3}{8}}(\mathbb{R}_t)\right)}&{=}&{c\|\psi(t)e^{it\partial_x^4}\phi(x)\|_{C\left(\mathbb{R}_x;H^{\frac{2s+3}{8}}(\mathbb{R}_t)\right)}^2\leq c\|\phi\|_{H^s(\mathbb{R})}^2}\\
 	&{\leq}&{c\int_{\xi}\langle \xi \rangle^{2s}\left(\int_{\tau}|\wt{w}(\xi,\tau)|^2\langle\tau+\xi^4\rangle^{{-2b}}d\tau \int \frac{d\tau}{\langle\tau+\xi^4\rangle^{2{-2b}}}\right) d\xi.}
 \end{eqnarray*}
 Since $b<\frac{1}{2}$, we have $\int \frac{1}{\langle\tau+\xi^4\rangle^{2-2b}}d\tau\leq c$. By using \eqref{obj}, estimate \eqref{derivative1} for $w_3$ follows and, consequently, \eqref{derivative1}  holds true for $w=w_1+w_2+w_3$.

\vspace{0.2cm}

\noindent\textbf{Estimate \eqref{bourgain1}}:

Finally,  again we split $w=w_1+w_2$, similarly as it was done in the proof of \eqref{space1}. For $w_1$, estimates \eqref{bourgain} and \eqref{eq:linear estimate} yield that
\[\norm{\psi(t) \mathcal{D}w_1(t,x)}_{X^{s,b}} \lesssim \sum_{k=1}^{\infty}\frac{1}{k!}\norm{F_1^k}_{H_x^s} \lesssim \norm{w}_{X^{s,-b}},\]
where $F_1^k$ is defined as in \eqref{eq:linear estimate0}.
 
For $w_2$, note that
\[
\begin{aligned}
\psi\partial_x^j\mathcal{D}w(t,x) =& c\int e^{ix\xi}e^{-it\xi^4}(i\xi)^j\psi(t) \int \frac{\wt{w}(\tau',\xi)}{(\tau' + \xi^4)} \left(e^{it(\tau'+\xi^4) }- 1\right) \; d\tau'd\xi\\
=&c\int e^{ix\xi}e^{-it\xi^4}(i\xi)^j\psi(t) \int \frac{\wt{w}(\tau',\xi)}{(\tau' + \xi^4)} e^{it(\tau'+\xi^4) } \; d\tau'd\xi\\
& -c\int e^{ix\xi}e^{-it\xi^4}(i\xi)^j\psi(t) \int \frac{\wt{w}(\tau',\xi)}{(\tau' + \xi^4)}  \; d\tau'd\xi\\
=& I -II.
\end{aligned}
\]
Let \[\wh{W}(\xi) = \int \frac{\wt{w}_2(\tau,\xi)}{(\tau+\xi^4)} \; d\tau.\]
Therefore, we use \eqref{bourgain} in $II$ to obtain
\[\norm{\psi  e^{it\partial_x^4}W}_{X^{s,b}} \lesssim \norm{W}_{H^s} \lesssim \norm{w}_{X^{s,-b}},\]
 for $b < \frac12$.

Now, it remains to show the following estimate
\begin{equation}\label{eq:c.4}
\Big(\int_{|\xi| > 1}|\xi|^{2s}\int\bra{\tau+\xi^4}^{2b} \Big| \int \frac{\wt{w}_2(\tau',\xi)}{i(\tau'+\xi^4)}\wh{\psi}(\tau - \tau') \; d\tau'\Big|^2 \; d\tau d\xi\Big)^{\frac12}\lesssim \norm{w}_{X^{s,-b}}.
\end{equation}
This follows by using the same argument as we did to prove \eqref{derivative1}. In fact, the proof of \eqref{eq:c.4}  is easier than proof of \eqref{derivative1}, since $L^2$ integral with respect to $\xi$ is negligible and hence it is enough to consider the relation between $\tau + \xi^4$ and $\tau'+\xi^4$. Thus, as consequence, we have
\[\norm{\psi \mathcal{D}w}_{X^{s,b}} \lesssim \norm{w}_{X^{s,-b}}.\]
Therefore, Lemma \ref{duhamel} is proved.
\end{proof}

\begin{lemma}\label{edbf}
	Let $s\in\mathbb{R}$. 
\begin{itemize}
\item[(a)] For $\frac{2s-7}{2} <\lambda< \frac{1+2s}{2}$ and $\lambda< \frac12$ the following inequality holds
		\begin{equation}\label{space2}
		\|\psi(t)\mathcal{L}^{\lambda}f(t,x)\|_{C\big(\mathbb{R}_t;\,H^s(\mathbb{R}_x)\big)}\leq c \|f\|_{H_0^\frac{2s+3}{8}(\mathbb{R}^+)};
		\end{equation}
\item[(b)] For $-4+j<\lambda<1+j$, $j=0,1$, we have 
		\begin{equation}\label{eq:(b)0}
		\|\psi(t)\partial_x^j\mathcal{L}^{\lambda}f(t,x)\|_{C\big(\mathbb{R}_x;\,H_0^{\frac{2s+3-2j}{8}}(\mathbb{R}_t^+)\big)}\leq c \|f\|_{H_0^\frac{2s+3}{8}(\mathbb{R}^+)};
		\end{equation}
\item[(c)] If $ s <4-4b $, $b < \frac12$, $-5<\lambda<\frac12$ and $s+4b-2<\lambda<s+\frac12$ yields that
\begin{equation}\label{bourgain2}
\|\psi(t)\mathcal{L}^{\lambda}f(t,x)\|_{X^{s,b}}\leq c \|f\|_{H_0^\frac{2s+3}{8}(\mathbb{R}^+)}.
\end{equation}
\end{itemize}
Estimates \eqref{space2}, \eqref{eq:(b)0} and \eqref{bourgain2} are so called space traces, derivative time traces and Bourgain spaces estimates, respectively.
\end{lemma}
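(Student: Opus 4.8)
The plan is to reduce all three estimates to the Duhamel bounds of Lemma~\ref{duhamel} via an exact representation of $\mathcal{L}^{\lambda}f$ as a Duhamel term with explicit distributional forcing. By \eqref{eq:BFO1} and \eqref{classe22}, $\mathcal{L}^{\lambda}f$ solves $(i\pt-\px^4)\mathcal{L}^{\lambda}f(t,x)=iM\frac{x_{-}^{\lambda-1}}{\Gamma(\lambda)}\mathcal{I}_{-\frac34-\frac{\lambda}{4}}f(t)$ distributionally, with $\mathcal{L}^{\lambda}f(0,\cdot)=0$; since $\mathcal{D}$ solves the same forced problem with vanishing data, I would first record the identity $\mathcal{L}^{\lambda}f=\mathcal{D}w$ with $w(t,x):=iM\frac{x_{-}^{\lambda-1}}{\Gamma(\lambda)}\mathcal{I}_{-\frac34-\frac{\lambda}{4}}f(t)$ (consistent with $\lambda\to0$, where $\frac{x_{-}^{\lambda-1}}{\Gamma(\lambda)}\to\delta_0$ recovers \eqref{eq:BFO}). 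Using \eqref{transformada1} and reflection for $\widehat{x_{-}^{\lambda-1}/\Gamma(\lambda)}(\xi)\sim|\xi|^{-\lambda}$, together with $\widehat{\mathcal{I}_{-\frac34-\frac{\lambda}{4}}f}(\tau)\sim|\tau|^{\frac34+\frac{\lambda}{4}}\hat f(\tau)$, I would then compute $|\wt{w}(\tau,\xi)|\sim|\xi|^{-\lambda}|\tau|^{\frac34+\frac{\lambda}{4}}|\hat f(\tau)|$.

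For parts (a) and (c) I would invoke \eqref{space1} and \eqref{bourgain1} of Lemma~\ref{duhamel} (both valid for all $s\in\R$), reducing everything to the single core inequality $\|w\|_{X^{s,-b}}\lesssim\|f\|_{H_0^{\frac{2s+3}{8}}(\R^+)}$. Substituting the expression for $\wt{w}$, this amounts to showing, for a.e.\ $\tau$, that $\int\langle\xi\rangle^{2s}\langle\tau+\xi^4\rangle^{-2b}|\xi|^{-2\lambda}\,d\xi\lesssim\langle\tau\rangle^{\frac{2s-3-2\lambda}{4}}$, after which the leftover weight $|\tau|^{\frac32+\frac{\lambda}{2}}$ reconstructs exactly $\langle\tau\rangle^{\frac{2s+3}{4}}$, the symbol of $\|f\|_{H_0^{(2s+3)/8}}$. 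The integrability at $\xi=0$ forces $\lambda<\frac12$ in both cases. In (a) the exponent $b\in(0,\frac12)$ is free and may be optimized, which yields the range $\frac{2s-7}{2}<\lambda<\frac{1+2s}{2}$; in (c) the exponent $b$ is pinned to that of the target $X^{s,b}$ norm, producing the tighter $b$-dependent bands $s<4-4b$, $-5<\lambda<\frac12$ and $s+4b-2<\lambda<s+\frac12$. In each case the endpoints are dictated by the interplay of the resonance $\xi^4\approx-\tau$ with the decay as $|\xi|\to\infty$.

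Part (b) must be handled directly, because the derivative-time-trace bound \eqref{derivative1} holds only for $-\frac32+j<s<\frac12+j$, while \eqref{eq:(b)0} is asserted for all $s$. I would first lower the order $j$ using $\px^j\mathcal{L}^{\lambda}f=\mathcal{L}^{\lambda-j}\mathcal{I}_{-\frac{j}{4}}f$ (from the construction of Subsection~\ref{section4} and Lemma~\ref{holmer1}) together with the Riemann--Liouville bound of Lemma~\ref{lio}, which converts $\|\mathcal{I}_{-\frac{j}{4}}f\|_{H_0^{(2s-2j+3)/8}}$ into $\|f\|_{H_0^{(2s+3)/8}}$ and moves $-4+j<\lambda<1+j$ to $-4<\lambda-j<1$. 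It then suffices to prove the case $j=0$, that is $\sup_x\|\psi\,\mathcal{L}^{\lambda}f(\cdot,x)\|_{H_0^{(2s+3)/8}(\R^+)}\lesssim\|f\|_{H_0^{(2s+3)/8}}$ for $-4<\lambda<1$. For this I would pass to the temporal Fourier transform: writing $\mathcal{L}^0 g(\cdot,x)=M\,K_x*\mathcal{I}_{-\frac34}g$ with causal kernel $K_x(t)=[e^{it\px^4}\delta_0](x)$, one has $\wh{K_x}(\tau)=\frac{1}{2\pi i}\int_{\R}\frac{e^{ix\xi}}{\tau+\xi^4}\,d\xi$, and a scaling/residue computation gives the uniform bound $|\wh{K_x}(\tau)|\lesssim\langle\tau\rangle^{-3/4}$ with exponential spatial decay on scale $|x|\sim|\tau|^{-1/4}$. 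Combining this with the $|\tau|^{3/4+\lambda/4}$ from the fractional integrals and the $x$-convolution against $x_{-}^{\lambda-1}/\Gamma(\lambda)$, the $\tau$-powers balance to yield $|\mathcal{F}_t\mathcal{L}^{\lambda}f(\cdot,x)(\tau)|\lesssim|\hat f(\tau)|$ uniformly in $x$; multiplying by $\langle\tau\rangle^{(2s+3)/8}$ gives (b) for every $s$, while $-4<\lambda<1$ is exactly what makes the $x$-convolution converge.

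I expect the principal obstacle to be the core inequality for $\|w\|_{X^{s,-b}}$ underlying (a) and (c): bounding the $\xi$-integral uniformly in $\tau$ requires splitting into the region near $\xi=0$, the resonant region $\xi^4\approx-\tau$, and $|\xi|\to\infty$, and it is the combination of these with the weight $\langle\tau+\xi^4\rangle^{-2b}$ that determines the admissible parameter bands. The second delicate point is the uniform-in-$x$ control of the temporal trace in (b), which rests on the residue estimate $|\wh{K_x}(\tau)|\lesssim\langle\tau\rangle^{-3/4}$. Once these are established, the identifications between $H_0^{\sigma}(\R^+)$ and $H^{\sigma}(\R)$ follow from the support of $\mathcal{L}^{\lambda}f$ in $t\ge0$ together with Lemmas~\ref{sobolev0} and \ref{cut}.
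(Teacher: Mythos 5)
Your reduction of parts (a) and (c) to Lemma \ref{duhamel} via the identity $\mathcal{L}^{\lambda}f=\mathcal{D}w$ with $w(t,x)=iM\frac{x_{-}^{\lambda-1}}{\Gamma(\lambda)}\mathcal{I}_{-\frac34-\frac{\lambda}{4}}f(t)$ contains a genuine gap: the core inequality $\norm{w}_{X^{s,-b}}\lesssim\norm{f}_{H_0^{(2s+3)/8}(\R^+)}$ is false for $b<\tfrac12$. With $|\wt{w}(\tau,\xi)|\sim|\xi|^{-\lambda}|\tau|^{\frac34+\frac{\lambda}{4}}|\wh{f}(\tau)|$, your requirement becomes $J(\tau):=\int\langle\xi\rangle^{2s}|\xi|^{-2\lambda}\langle\tau+\xi^4\rangle^{-2b}\,d\xi\lesssim\langle\tau\rangle^{\frac{2s-3-2\lambda}{4}}$. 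But for $\tau\to-\infty$ the resonant set $\xi^4\approx|\tau|$ contributes, after the substitution $\eta=\xi^4$,
\begin{equation*}
J(\tau)\gtrsim|\tau|^{\frac{s-\lambda}{2}-\frac34}\int_{1\le|\eta-|\tau||\le|\tau|/2}\langle\tau+\eta\rangle^{-2b}\,d\eta\sim\langle\tau\rangle^{\frac{2s-2\lambda-3}{4}}\cdot\langle\tau\rangle^{1-2b},
\end{equation*}
an excess of $\langle\tau\rangle^{1-2b}$ with $1-2b>0$. Since $\wh{f}$ does not vanish for $\tau<0$, this loss is unavoidable and translates into a loss of $\tfrac{1-2b}{2}$ derivatives on $f$: at best you recover (a) with an $\varepsilon$-loss by pushing $b\uparrow\tfrac12$, and (c) not at all for the fixed $b$ in the statement. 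The structural reason is that $\widetilde{\mathcal{D}w}\approx\wt{w}/(\tau+\xi^4)$, so the quantity one should estimate carries the weight $\langle\tau+\xi^4\rangle^{2b-2}$, which is integrable across the resonance ($2-2b>1$), whereas passing through the black box $\norm{w}_{X^{s,-b}}$ replaces it by $\langle\tau+\xi^4\rangle^{-2b}$, which is not ($2b<1$). This is precisely why the paper (following Colliander--Kenig and Holmer) does not feed the boundary forcing into Lemma \ref{duhamel}, but instead computes $\mathcal{F}_x(\psi\mathcal{L}^{\lambda}f)$ explicitly, splits off the singular set with a cut-off $\theta(\tau'+\xi^4)$, and estimates the pieces $f_1,f_2,f_3$ keeping the full factor $\frac{1-\theta(\tau'+\xi^4)}{\tau'+\xi^4}$; it is from that computation (and the corresponding one for (a), done directly with Lemmas \ref{sobolevh0} and \ref{lio}) that the parameter bands $\frac{2s-7}{2}<\lambda<\frac{1+2s}{2}$ and $s+4b-2<\lambda<s+\frac12$ actually arise -- note your own endpoint bookkeeping ($\lambda>s-4b+\tfrac12$ at $|\xi|\to\infty$) does not reproduce them.

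Your part (b) is essentially the paper's route: reduce to $j=0$ via $\px^{j}\mathcal{L}^{\lambda}f=\mathcal{L}^{\lambda-j}\mathcal{I}_{-\frac{j}{4}}f$ and Lemma \ref{lio}, commute the fractional time derivative through, and prove a uniform-in-$x$ $L^2_t$ multiplier bound. Be aware, though, that the step you summarize as ``a scaling/residue computation gives $|\wh{K_x}(\tau)|\lesssim\langle\tau\rangle^{-3/4}$'' is where all the work lies: for $\tau<0$ the rescaled symbol $\int e^{ix|\tau|^{1/4}\xi}\,\xi_{\pm}^{-\lambda}(1-\xi^4)^{-1}d\xi$ has poles on the real axis, and its uniform boundedness in both $x$ and $\tau$ is exactly the $g_{21}=\Theta\ast\Psi$ argument in the paper; asserting it without that decomposition leaves the key point unproved.
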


\begin{proof}
Let us first to prove \eqref{space2}. By density, we may assume that $f\in C_{0,c}^{\infty}(\mathbb{R}^{+})$. Moreover, from definition of $\mathcal{L}^{\lambda}$, it suffices to consider $\mathcal{L}^{\lambda}f(t,x)$ (removing $\psi$) for $\supp f \subset [0,1]$, thanks to Lemma \ref{cut}.

From \eqref{transformada}, \eqref{eq:BFO} and \eqref{eq:BFOC}, we see that 
\[\mathcal{F}_x(\mathcal{L}^{\lambda}f)(t,\xi)=Me^{-\frac{i\pi\lambda}{2}}(\xi-i0)^{-\lambda}\int_0^te^{-i(t-t')\xi^4}\mathcal{I}_{-\frac{\lambda}{4}-\frac{3}{4}}f(t') \;dt'.\] 
By using the following change of variable $\eta=\xi^4,$  \eqref{transformada2} and the definition of the Fourier transform we have that
	\begin{eqnarray*}
		\|\mathcal{L}^{\lambda}f(t,\cdot)\|_{H^s(\mathbb{R})}^2&\leq& c \int_{\eta}|\eta|^{-\frac{\lambda}{2}-\frac{3}{4}}\langle\eta\rangle^{\frac{s}{2}}\left|\int_0^te^{-i(t-t')\eta}\mathcal{I}_{-\frac{\lambda}{4}-\frac{3}{4}}f(t')dt'\right|^2d\eta\\
		&=&c\int_{\eta}|\eta|^{-\frac{\lambda}{2}-\frac{3}{4}}\langle\eta\rangle^{\frac{s}{2}}\left|\big(\chi_{(-\infty,t)}\mathcal{I}_{-\frac{\lambda}{4}-\frac{3}{4}}f\big)^{\widehat{}}(\eta)\right|^2d\eta,
	\end{eqnarray*}
for a fixed $t$.  Note that, by Lemma \ref{sobolev0}, we can replace  $|\eta|^{-\frac{\lambda}{2}-\frac{3}{4}}$ by $\bra{\eta}^{-\frac{\lambda}{2}-\frac{3}{4}}$,  since $$-1 < -\frac{\lambda}{2}-\frac{3}{4}\Leftrightarrow \lambda < \frac12.$$  Moreover, Lemma \ref{sobolevh0} (under the condition $-1 < -\frac{\lambda}{2}-\frac{3}{4} + \frac{s}{2} < 1$ for removing $\chi_{(-\infty,t)})$ and Lemma \ref{lio} (under the condition $-5 < \lambda$) yield that
\[
\begin{aligned}
\int_{\eta}|\eta|^{-\frac{\lambda}{2}-\frac{3}{4}}\langle\eta\rangle^{\frac{s}{2}}\left|(\chi_{(-\infty,t)}\mathcal{I}_{-\frac{\lambda}{4}-\frac{3}{4}}f)^{\widehat{}}(\eta)\right|^2d\eta &\leq c\int_{\eta}\langle\eta\rangle^{\frac{s}{2}-\frac{\lambda}{2}-\frac{3}{4}}\left|(\chi_{(-\infty,t)}\mathcal{I}_{-\frac{\lambda}{4}-\frac{3}{4}}f)^{\widehat{}}(\eta)\right|^2d\eta\\
&\leq c\norm{\mathcal{I}_{-\frac{\lambda}{4}-\frac{3}{4}}f}_{H^{\frac{s}{4}-\frac{\lambda}{4}-\frac{3}{8}}}^2 \leq c \|f\|_{H_0^{\frac{2s+3}{8}}}^2,	
\end{aligned}
\]
which proves \eqref{space2} thanks to the definition of $H_0^s(\R^+)$-norm. 

Now we prove \eqref{eq:(b)0}. A direct calculation gives
\[\px^j\mathcal{L}^{\lambda}f = \mathcal{L}^{\lambda - j}(\mathcal{I}_{-\frac{j}{4}}f).\]
With the previous equality in hand and Lemma \ref{lio}, it suffices to show \eqref{eq:(b)0} for $j=0$. Lemma \ref{cut} ensures us to ignore the cut-off function $\psi$. The change of variable $t\rightarrow  t-t'$ gives
$$(I-\partial_t^2)^{\frac{2s+3}{16}}\left(\frac{x_{-}^{\lambda-1}}{\Gamma(\lambda)}*\int_{-\infty}^te^{i(t-t')\partial_x^4}\delta(x)h(t')dt'\right)=\left(\frac{x_{-}^{\lambda-1}}{\Gamma(\lambda)}*\int_{-\infty}^te^{i(t-t')\partial_x^4}\delta(x)(I-\partial_{t'}^2)^{\frac{2s+3}{16}}h(t')dt'\right). $$
So, we just need to prove that
\begin{equation}\label{eq:(b)}
\left\|\int_{\xi}e^{ix\xi}(\xi-i0)^{-\lambda}\int_{-\infty}^te^{-i(t-t')\xi^4}(\mathcal{I}_{-\frac{\lambda}{4}-\frac{3}{4}}f)(t')dt'd\xi\right\|_{L_x^{\infty}L_t^2(\mathbb{R})} \leq c \|f\|_{L_t^2(\mathbb{R}^+)},
\end{equation}
thanks to $\pt^{\sigma}(\mathcal{I}_{\alpha}f) = \mathcal{I}_{\alpha}(\pt^{\sigma}f)$. We use $\chi_{(-\infty,t)}=\frac{1}{2}\mbox{sgn}(t-t')+\frac{1}{2}$ to obtain
\[\begin{aligned}
\int_{\xi}e^{ix\xi}(\xi-i0)^{-\lambda}&\int_{-\infty}^te^{-i(t-t')\xi^4}(\mathcal{I}_{-\frac{\lambda}{4}-\frac{3}{4}}f)(t') \; dt'd\xi\\
=&~{}\frac12\int_{\xi}e^{ix\xi}(\xi-i0)^{-\lambda}\int_{-\infty}^{\infty}\mbox{sgn}(t-t')e^{-i(t-t')\xi^4}(\mathcal{I}_{-\frac{\lambda}{4}-\frac{3}{4}}f)(t') \; dt'd\xi\\
&+\frac12\int_{\xi}e^{ix\xi}(\xi-i0)^{-\lambda}\int_{-\infty}^{\infty}e^{-i(t-t')\xi^4}(\mathcal{I}_{-\frac{\lambda}{4}-\frac{3}{4}}f)(t') \; dt'd\xi\\
:=& I(t,x)+II(t,x).
\end{aligned}\]
We will treat $I(t,x):=I$ and $II(t,x):=II$ separately. To estimate $I$, we can rewrite it as
\[I(t,x)=\frac12\int_{\xi}e^{ix\xi}(\xi-i0)^{-\lambda} \left((e^{-i\cdot\xi^4}\mbox{sgn}(\cdot))*\mathcal{I}_{-\frac{\lambda}{4}-\frac{3}{4}}f\right)(t) \; d\xi.\]
A direct calculation gives
$$\mathcal{F}_t\left((e^{-i\cdot\xi^4}\mbox{sgn}(\cdot))*\mathcal{I}_{-\frac{\lambda}{4}-\frac{3}{4}}f\right)(\tau)=\frac{(\tau-i0)^{\frac{3+\lambda}{4}}\hat{f}(\tau)}{i(\tau+\xi^4)}.$$ 
Fubini's theorem and dominated converge theorem imply that
\begin{equation*}
I(t,x)=\int_{\tau}e^{it\tau}\lim_{\epsilon\rightarrow 0}\int_{|\tau+\xi^4|>\epsilon}\frac{e^{ix\xi}(\tau-i0)^{\frac{\lambda+3}{4}}(\xi-i0)^{-\lambda}}{i(\tau+\xi^4)}\wh{f}(\tau) \; d\xi d\tau.
\end{equation*}
Thus, once we show that the function 
$$g(\tau):=\lim_{\epsilon\rightarrow 0}\int_{|\tau+\xi^4|>\epsilon}\frac{e^{ix\xi}(\tau-i0)^{\frac{\lambda+3}{4}}(\xi-i0)^{-\lambda}}{(\tau+\xi^4)} \; d\xi$$
is bounded independently of $\tau$ variable, the Plancherel's theorem enables us to obtain \eqref{eq:(b)}. The change of variable $\xi \mapsto |\tau|^{\frac{1}{4}}\xi$  and the fact that
\[(|\tau|^{\frac{1}{4}}\xi-i0)^{-\lambda}=|\tau|^{-\frac{\lambda}{4}}(\xi_{+}^{-\lambda}+e^{i\pi\lambda}\xi_{-}^{-\lambda})\]
gives
\begin{eqnarray*}
	g(\tau)&=&\chi_{\{\tau>0\}}\int_{\xi}e^{ix|\tau|^{\frac{1}{4}}\xi} \; \frac{\xi_{+}^{-\lambda}+e^{i\pi\lambda}\xi_{-}^{-\lambda}}{1+\xi^4} \; d\xi - e^{-\frac{i\pi(\lambda + 3)}{4}}\chi_{\{\tau<0\}}\int_{\xi}e^{ix|\tau|^{\frac{1}{4}}\xi} \; \frac{\xi_{+}^{-\lambda}+e^{i\pi\lambda}\xi_{-}^{-\lambda}}{1-\xi^4} \; d\xi\\
	&:=&g_1 - e^{-\frac{i\pi(\lambda + 3)}{4}}g_2.
\end{eqnarray*}
We only consider $g_2$, since $g_1$ is uniformly bounded in $\tau$ for $-3 < \lambda < 1$. Let us define the following cut-off function $\zeta\in C^{\infty}(\mathbb{R})$ such that 
\begin{equation*}
\zeta:=\begin{cases}
1, &\text{in } [\frac{3}{4},\frac{4}{3}]\\
0, &\text{outside } (\frac{1}{2},\frac{3}{2}).
\end{cases}
\end{equation*}
Then, we obtain
\begin{eqnarray*}
	g_2&=&\chi_{\{\tau<0\}}\int_{\xi}e^{ix|\tau|^{\frac{1}{4}}\xi} \zeta(\xi) \frac{\xi_{+}^{-\lambda}}{1-\xi^4} \; d\xi+ \chi_{\{\tau<0\}}\int_{\xi}e^{ix|\tau|^{\frac{1}{4}}\xi} (1-\zeta(\xi)) \frac{\xi_+^{-\lambda}+e^{i\pi\lambda}\xi_{-}^{-\lambda}}{1-\xi^4} \; d\xi\\
	&=&g_{21}+g_{22}.
\end{eqnarray*}
It is clear that $g_{22}$ is bounded independently of $\tau$ when $\lambda>-3$, and hence it remains to deal with $g_{21}$. Consider the following functions
\[\wh{\Theta}(\xi) = \frac{\zeta(\xi) \xi_+^{-\lambda}}{1+\xi + \xi^2 + \xi^3 } \quad \mbox{and} \quad \wh{\Psi}(\xi) = \frac{1}{i(\xi - 1)}.\]
We remark that $\wh{\Theta}$ is a Schwartz function, and hence $\Theta \in \Sch(\R)$. Moreover, we immediately know  that
\[\Psi(x) = \frac12 e^{ix} \mbox{sgn}(x),\]
since $\ft_x[\mbox{sgn}(x)](\xi) = \text{v.p.} \frac{2}{i\xi}$.
Then, $g_{21}$ can be written as
\[g_{21}(\tau) = -i\chi_{\{\tau<0\}}\int_{\xi}e^{ix|\tau|^{\frac{1}{4}}\xi}\wh{\Theta}(\xi)\wh{\Psi}(\xi) \; d\xi = -2i\pi \chi_{\{\tau<0\}}(\Theta \ast \Psi)(|\tau|^{\frac14}x),\]
which implies
\[
\begin{aligned}
|g_{21}(\tau)| &\lesssim \left|\int \Theta(y)\Psi(|\tau|^{\frac14}x - y) \; dy  \right|\lesssim \int |\Theta(y)| \; dy \lesssim_{\zeta} 1.
\end{aligned}
\]

Now, we bound $II$. By using the definition of Fourier transform and \eqref{transformada2} we have, after the change of variable $\eta=\xi^4$ and contour, that
\[\begin{aligned}
	II(t,x)=&~{}\frac12\int_{\xi}e^{ix\xi}e^{-it\xi^4}(\xi^4-i0)^{\frac{\lambda+3}{4}}\wh{f}(\xi^4)(\xi-i0)^{-\lambda} \; d\xi\\
	=&~{}\frac12\int_{0}^{+\infty}e^{it\eta}e^{-ix\eta^{\frac{1}{4}}}(\eta-i0)^{\frac{\lambda+3}{4}}(\eta^{\frac{1}{4}}-i0)^{-\lambda}\eta^{-\frac{3}{4}}\wh{f}(\eta) \;d\eta\\
	=&~{}cf(t),
\end{aligned}\]
for some $c \in \mathbb{C}$, which implies $\|II(\cdot,x)\|_{L_t^2} \lesssim  \|f\|_{L_t^2}$. Therefore, we complete the proof of \eqref{eq:(b)0}.	
	
Lastly, let us show \eqref{bourgain2}. A direct calculation ensures that
\begin{equation*}
\mathcal{F}_x(\psi(t)\mathcal{L}^{\lambda}f)(t,\xi)=Me^{-\frac{i\pi\lambda}{2}}e^{\frac{i\pi(\lambda+4)}{10}}(\xi-i0)^{-\lambda}\psi(t)e^{-it\xi^4}\int\frac{e^{it(\tau'+\xi^4)}-1}{i(\tau'+\xi^4)}(\tau'-i0)^{\frac{\lambda}{4}+\frac{3}{4}}\wh{f}(\tau')\; d\tau',
\end{equation*}
which can be divided into the following quantities
\begin{equation*}
\wh{f}_1(t,\xi)=Me^{-\frac{i\pi\lambda}{2}}e^{\frac{i\pi(\lambda+4)}{10}}(\xi-i0)^{-\lambda}\psi(t)\int\frac{e^{it\tau'}-e^{-it\xi^4}}{i(\tau'+\xi^4)}\theta(\tau'+\xi^4)(\tau'-i0)^{\frac{\lambda}{4}+\frac{3}{4}}\wh{f}(\tau')\; d\tau',
\end{equation*} 
\begin{equation*}
\wh{f}_2(t,\xi)=Me^{-\frac{i\pi\lambda}{2}}e^{\frac{i\pi(\lambda+4)}{10}}(\xi-i0)^{-\lambda}\psi(t)\int\frac{e^{it\tau'}}{i(\tau'+\xi^4)}(1-\theta(\tau'+\xi^4))(\tau'-i0)^{\frac{\lambda}{4}+\frac{3}{4}}\wh{f}(\tau')\; d\tau'
\end{equation*} 
and
\begin{equation*}
\wh{f}_3(t,\xi)=Me^{-\frac{i\pi\lambda}{2}}e^{\frac{i\pi(\lambda+4)}{10}}(\xi-i0)^{-\lambda}\psi(t)\int\frac{e^{-it\xi^4}}{i(\tau'+\xi^4)}(1-\theta(\tau'+\xi^4))(\tau'-i0)^{\frac{\lambda}{4}+\frac{3}{4}}\wh{f}(\tau')\; d\tau'.
\end{equation*} 
Here $\theta \in \mathcal{S}(\R)$ is defined by
\begin{equation}\label{theta}
\theta(\tau):=\begin{cases}
1, &\text{for } |\tau|\leq 1\\
0, &\text{for } |\tau|\geq 2.
\end{cases}
\end{equation}
 It follows that $\psi(t)\mathcal{L}^{\lambda}f= f_1 + f_2 - f_3$.  

For $f_1$, we use the same argument as it was done for $w_1$, in the proof of  inequality \eqref{bourgain1}. By the Taylor series expansion for $e^{it(\tau'+\xi^4)}$ at $it(\tau'+\xi^4) = 0$, we write 
\[
\psi(t)\mathcal{L}^{\lambda}f_1(t,x) = c\sum_{k=1}^{\infty}\frac{i^{k-1}}{k!}\psi^k(t)e^{it\partial_x^4}F_1^k(x),
\]
for some constant $c \in \mathbb{C}$, where $\psi^k(t) = t^k\psi(t)$ and 
\[
\wh{F}_1^k(\xi) = (\xi-i0)^{-\lambda}\int \theta(\tau'+\xi^4)(\tau'+\xi^4)^{k-1}\tau'^{\frac{\lambda}{4}+\frac{3}{4}}\wh{f}(\tau') \; d\tau'.
\]	
By using \eqref{transformada2}, \eqref{bourgain1} and \eqref{theta}, it is enough to show that
\begin{equation}\label{crb100}
\int_{\xi}\langle \xi\rangle^{2s}|\xi|^{-2\lambda}\left|\int_{|\tau'+\xi^4|\leq 1}|\tau'+\xi^4|^{k-1}|\tau'|^{\frac{\lambda+3}{4}}|\wh{f}(\tau')| \; d\tau' \right|^2 d\xi \lesssim \norm{f}_{H_0^{\frac{2s+3}{8}}}^2.
\end{equation}
Let us split $|\xi|$ in two regions: $|\xi| \le 1$ and $|\xi| > 1$. For the region $|\xi| \le 1$ and $|\tau'| \lesssim 1$ ($|\xi| \le 1$ and $|\tau'+\xi^4| \le 1$ imply $|\tau'| \lesssim 1$) we have that both $|\xi|^{-2\lambda}$ and $|\tau'|^{\frac{(\lambda+3)}{2}}$ are integrable,  for $-5 < \lambda < \frac12$, respectively. So, we obtain \eqref{crb100} by using Cauchy-Schwarz inequality in $\tau'$.

 Now, assume that $|\xi| > 1$, which in addition with $|\tau'+\xi^4| \le 1$ implies $|\tau'| \sim |\xi|^4 > 1$. Let $\wh{f}^{\ast}(\tau') = \bra{\tau'}^{\frac{2s+3}{8}}\wh{f}(\tau')$. Then, the change of variable $\xi^4 \mapsto \eta$, gives that the left-hand side of \eqref{crb100} is bounded by
\[
 \int_{|\xi| > 1}|\xi|^3 |\mathcal{M}\wh{f}^{\ast}(\xi^4)|^2 \; d\xi\lesssim \int_{|\eta| > 1}|\mathcal{M}\wh{f}^{\ast}(\eta)|^2 \; d\eta\lesssim \norm{f^{\ast}}_{L^2}^2 = \norm{f}_{H_0^{\frac{2s+3}{8}}}^2,
\]
where $\mathcal{M}\wh{f}^{\ast}$ is the Hardy-Littlewood maximal function of $\wh{f}^{\ast}$, and $f_1$ is controlled.

For $f_2$, from \eqref{transformada2}, the definition of inverse Fourier transform and Lemma \ref{lem:Xsb}, follows that
\begin{eqnarray*}
	\|f_2\|_{X^{s,b}}^2&\lesssim& \int\int\langle \xi\rangle^{2s}|\xi|^{-2\lambda}\langle \tau+\xi^4\rangle^{2b}\frac{(1-\theta(\tau+\xi^4))^2}{|\tau+\xi^4|^2}|\tau|^{\frac{\lambda+3}{2}}|\wh{f}(\tau)|^2 \; d\tau d\xi\\
	&\lesssim&\int  |\tau|^{\frac{\lambda+3}{2}}\left( \int\frac{\langle \xi\rangle^{2s}|\xi|^{-2\lambda} }{\langle \tau+\xi^4\rangle^{2-2b}}\; d\xi\right)|\hat{f}(\tau)|^2 \; d\tau.
\end{eqnarray*}
Thus, by the change of variable $\eta = \xi^4$ and Lemma \ref{sobolev0}, for $-5 < \lambda$ (we may assume $\supp f \subset [0,1]$, thanks to Lemma \ref{cut}), it suffices to show
\[I(\tau)=\int\frac{|\eta|^{-\frac{3}{4}-\frac{\lambda}{2}}\langle \eta\rangle^{\frac{s}{2}}}{\langle \tau+\eta\rangle^{2-2b}} \; d\eta \lesssim  \langle\tau\rangle^{\frac{2s}{4}-\frac{2\lambda}{4}-\frac{3}{4}}.\]
Here, we split $|\tau|$ in two regions: $|\tau|\leq2$ and $|\tau|>2$. When $|\tau|\leq2$, we have $\langle\tau+\eta\rangle\sim\langle\eta\rangle$. For $s<4-4b$  and $s+4b-\frac72 < \lambda<\frac{1}{2}$, we get
\begin{equation*}
I(\tau)\lesssim \int_{|\eta|\leq 1} |\eta|^{\frac{-3-2\lambda}{4}}+\int\frac{d\eta}{\langle \eta\rangle^{2-2b-\frac{s}{2}+\frac{3}{4}+\frac{\lambda}{2}}}\lesssim 1.
\end{equation*}
Now, working in the region $|\tau|>2$, we divide the integral region in $\eta$ into $|\eta|< \frac{|\tau|}{2}$ and  $|\eta| \ge \frac{|\tau|}{2}$. In the first region, for $b<\frac12$ and $\lambda < \min(\frac12, s+\frac12)$, we bound in the following way 
\begin{equation*}
 \langle\tau\rangle^{2b-2}\left(\int_{|\eta| \le 1}|\eta|^{-\frac{3}{4}-\frac{2\lambda}{4}} \; d\eta +\int_{1 < |\eta|\leq \frac{|\tau|}{2}}|\eta|^{\frac{-3-2\lambda+2s}{4}}d\eta \right) \lesssim \langle\tau\rangle^{\frac{-3-2\lambda+2s}{4}}.
\end{equation*} 
On the other hand,  in the second region, we have that $|\tau+\eta| \geq \frac{1}{2} |\tau| > 1$. Then, for $s-2 < \lambda$ and $b<\frac{1}{2}$, holds that
\begin{equation*}
\begin{aligned}
I(\tau) &\lesssim \langle\tau\rangle^{\frac{-3-2\lambda+2s}{4}}\int \frac{d\eta}{\langle\tau+\eta\rangle^{2-2b}} \lesssim \langle\tau\rangle^{\frac{-3-2\lambda+2s}{4}}\int_{|s| > 1} \frac{ds}{|s|^{2-2b}}\lesssim \langle\tau\rangle^{\frac{-3-2\lambda+2s}{4}},
\end{aligned}
\end{equation*}
so, 
$$\norm{f_2}_{X^{s,b}}\lesssim \norm{f}_{H_0^{\frac{2s+3}{8}}},$$
this completes the estimate for $f_2$.

Finally, let us show that $f_3$ can be controlled. Similarly as for $f_1$, it suffices to show 
\begin{equation}\label{23110}
\int\langle \xi\rangle^{2s}|\xi|^{-2\lambda}\left|\int (1-\theta(\tau'+\xi^4))|\tau'+\xi^4|^{-1}|\tau'|^{\frac{\lambda+3}{4}}|\wh{f}(\tau')| \; d\tau' \right|^2 d\xi \lesssim \norm{f}_{H_0^{\frac{2s+3}{8}}}^2.
\end{equation}
Again, we split the region $|\xi|$ as follows: $|\xi| \le 1$ and $|\xi| >1$. Considering $|\xi| \le 1$, since $|\xi|^{-2\lambda}$ is integrable, for $\lambda < \frac12$, and we may ignore the integration in $\xi$. Let us work in the region $|\tau'| \le 1$. In this region $|\tau'|^{\frac{(\lambda+3)}{2}}$ is integrable, for $ \lambda>-5$, and hence we get \eqref{23110}. 

On the region $|\tau'| > 1$, since $|\tau' +\xi^4| \sim |\tau'|$ and $|\tau'|^{\frac{-s+\lambda -3}{5}}$ are $L^2$ integrable, for $\lambda < s+\frac12$, we also get \eqref{23110} by using the Cauchy-Schwarz inequality in $\tau'$. Still looking on the region $|\tau'| > 1$, since $|\tau' +\xi^4| \sim |\tau'|$ we have that the left-hand side of \eqref{23110} is bounded by
\begin{equation}
\begin{split}
c\left(\int_{|\tau'|>1} |\tau'|^{\frac{\lambda-1}{4}}|\hat{f}|^2d\tau'\right)^2&\sim \left(\int_{|\tau'|>1} \frac{|\tau'|^{\frac{\lambda-1}{4}}}{\langle \tau'\rangle^{\frac{2s+3}{8}}}\langle \tau'\rangle^{\frac{2s+3}{8}}|\hat{f}|^2d\tau'\right)^2\\
&\lesssim \int \frac{\langle\tau'\rangle^{\frac{\lambda-1}{2}}}{\langle\tau'\rangle^{\frac{2s+3}{8}}}d\tau' \|f\|_{H^{\frac{2s+3}{4}}}^2\lesssim \|f\|_{H_0^{\frac{2s+3}{8}}}^2,
\end{split}
\end{equation}
where we have used that $\lambda<s+\frac12$, and the result follows on $|\xi| \le 1$. On the other hand, in the region $|\xi| > 1$ and $|\tau'| \le 1$, since $|\tau' +\xi^4| \sim |\xi|^4\sim \langle \xi \rangle^4$ and $\langle \xi\rangle^{2s-2\lambda-8
 }$ are integrable for $\lambda>-\frac72+s$, we also get \eqref{23110}.

Considering the region $|\xi| > 1$ and $|\tau'| > 1$. There are two possibilities:
	\vspace{0.1cm}
\begin{itemize} \item[I.]  $|\tau'| \le \frac12|\xi|^4$;
	\vspace{0.1cm}
	\item[II.] $\frac12|\xi|^4 <|\tau'|$.
	\end{itemize}
		\vspace{0.1cm}
	 In view of the proof of \cite[Lemma 5.8 (d)]{Holmerkdv} (see also \cite{HolmerNLS}), one can replace $\frac{1-\theta(\tau'+\xi^4)}{\tau'+\xi^4}$ by $\beta(\tau'+\xi^4)$ for some $\beta \in \Sch(\R)$. Hence, the left-hand side of \eqref{23110} is dominated by
\begin{equation}\label{eq:23110}
c\int_{|\xi| > 1}|\xi|^{2s-2\lambda}\left|\int_{|\tau'|>1} |\tau'+\xi^4|^{-N}|\tau'|^{\frac{\lambda+3}{4}}|\wh{f}(\tau')| \; d\tau' \right|^2 d\xi,
\end{equation}
for $N \ge 0$. By Cauchy-Schwarz inequality and choosing $N = N(s,\lambda) \gg 1$, we have \eqref{23110} for both cases. Indeed, for the case $I$ (in this case $|\tau'+\xi^4| \sim |\xi|^4$), \eqref{eq:23110} can be controlled by
\[
c \norm{f}_{H^{\frac{2s+3}{8}}}^2\int_{|\xi| > 1}|\xi|^{2s-2\lambda-4N}  \int_{1 < |\tau'| \le \frac12|\xi|^4} |\tau'|^{\frac{-2s-3 + 2\lambda+6-4N}{4}}d\tau'd\xi \lesssim \norm{f}_{H_0^{\frac{2s+3}{8}}}^2.
\]
For case $II$ (in this case $|\tau'+\xi^4| \sim |\tau'|$), \eqref{eq:23110} is bounded by
\[c \int_{|\xi| > 1}|\xi|^{2s-2\lambda}\left|\int_{\frac12|\xi|^4 <  |\tau'|} |\tau'|^{\frac{2\lambda+6-2s-3- 4N}{4}}|\tau'|^{\frac{2s+3}{8}}|\wh{f}(\tau')| \; d\tau' \right|^2 d\xi \lesssim \norm{f}_{H_0^{\frac{2s+3}{8}}}^2,\]
thus
$$\norm{f_3}_{X^{s,b}}\lesssim \norm{f}_{H_0^{\frac{2s+3}{8}}},$$ this finishes the estimate for $f_3$. 

Remembering that $\psi(t)\mathcal{L}^{\lambda}f= f_1 + f_2 - f_3$, and using the estimates of $f_i$, $i=1,2,3$,  \eqref{bourgain2} follows and the proof is complete.
\end{proof}

To close this section, let us enunciate the trilinear estimates associated to the fourth order nonlinear Schr\"odinger equation \eqref{fourth}. The proof of this estimate can be found in \cite{Tzvetkov} (see also \cite{CaCa}), thus we will omit it.

\begin{proposition}\label{prop:bi1}
	For $s\geq 0 $, there exists $b = b(s) < 1/2$ such that we have
	\begin{equation}\label{eq:bilinear1}
	\norm{u_1u_2\overline{u}_3}_{X^{s,-b}} \le c\norm{u_1}_{X^{s,b}}\norm{u_2}_{X^{s,b}}\norm{u_3}_{X^{s,b}}.
	\end{equation}
\end{proposition}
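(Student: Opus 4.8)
The plan is to argue by duality and reduce the trilinear bound to a weighted convolution inequality, whose summability is governed by the algebraic structure of the fourth-order resonance function. Since $X^{s,-b}$ is dual to $X^{-s,b}$, I would write
\[
\norm{u_1u_2\overline{u}_3}_{X^{s,-b}} = \sup_{\norm{v}_{X^{-s,b}}\le 1}\left|\int_{\R^2} u_1u_2\overline{u}_3\,\overline{v}\;dx\,dt\right|,
\]
and pass to the space--time Fourier side, where the integration forces the constraints $\xi=\xi_1+\xi_2-\xi_3$ and $\tau=\tau_1+\tau_2-\tau_3$. Setting $f_i=\bra{\xi_i}^{s}\bra{\tau_i+\xi_i^4}^{b}|\wt{u_i}|\ge0$ and $g=\bra{\xi}^{-s}\bra{\tau+\xi^4}^{b}|\wt{v}|\ge0$, the quantity becomes a positive quadrilinear form carrying the frequency symbol $\bra{\xi}^{s}\bra{\xi_1}^{-s}\bra{\xi_2}^{-s}\bra{\xi_3}^{-s}$ and the modulation symbol $\bra{\tau+\xi^4}^{-b}\prod_i\bra{\tau_i+\xi_i^4}^{-b}$. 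Because $s\ge0$ and $\bra{\xi}\lesssim\bra{\xi_1}\bra{\xi_2}\bra{\xi_3}$ on the constraint set, the frequency symbol is $\lesssim 1$, so it suffices to establish the estimate for $s=0$.

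Next I would extract the dispersive smoothing from the modulations. Writing $\sigma=\tau+\xi^4$ and $\sigma_i=\tau_i+\xi_i^4$, the constraints give $\sigma_1+\sigma_2-\sigma_3-\sigma=\xi_1^4+\xi_2^4-\xi_3^4-\xi^4=:H$, so that $\max\set{|\sigma|,|\sigma_1|,|\sigma_2|,|\sigma_3|}\gtrsim|H|$. The crucial identity, obtained from $\xi_1+\xi_2=\xi_3+\xi$, is
\[
H = 2(\xi_1-\xi_3)(\xi_2-\xi_3)\bigl(\xi_1\xi_2+\xi_3\xi-2(\xi_1+\xi_2)^2\bigr).
\]
Here the first two factors equal $2(\xi-\xi_1)(\xi-\xi_2)$, which is exactly the resonance function of the cubic Schr\"odinger equation, while the remaining quadratic factor is of size comparable to the square of the largest frequency in the generic case. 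This extra quadratic gain is the additional smoothing furnished by the fourth-order dispersion, and it is precisely what allows the estimate to persist with an exponent $b<\tfrac12$.

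I would then decompose dyadically in the frequencies $|\xi_i|\sim N_i$ and in the modulations $\bra{\sigma},\bra{\sigma_i}\sim L,L_i$. In each block the factor carrying the largest modulation is placed in $L^2$ by Cauchy--Schwarz in its own dual variable, reducing the remaining product to a space--time estimate for the fourth-order group; for this one uses either the sharp $L^{10}_{t,x}$ Strichartz inequality for $e^{it\partial_x^4}$ or, more efficiently, a bilinear $L^2_{t,x}$ estimate exploiting the transversality of the characteristic curves $\tau=-\xi^4$ (whose group velocities $4\xi^3$ separate the interacting frequencies). The resonance lower bound $L_{\max}\gtrsim|H|$ then supplies a positive power of $L_{\max}$ with which to compensate the deficit $\tfrac12-b$ and to sum the resulting geometric series in the dyadic parameters.

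The step I expect to be the main obstacle is the near-resonant region, where $H$ is anomalously small: this happens when $\xi_1\approx\xi_3$ or $\xi_2\approx\xi_3$, so that the NLS-type factor degenerates, or when the quadratic factor nearly vanishes, and it is aggravated by the degeneracy of the phase $\xi^4$ at $\xi=0$ (vanishing curvature), which is responsible for the loss in the fourth-order Strichartz estimate. In these regions no gain can be drawn from the modulations, and one must instead rely on the geometry of the convolution together with a bilinear $L^2$ estimate carrying an explicit gain in the frequency separation. Checking that the surviving $\tau$-integrals converge uniformly across all of these degenerate sub-cases --- convergence that ultimately rests on $\int\bra{\tau}^{-2+2b}\,d\tau<\infty$, i.e.\ exactly on $b<\tfrac12$ --- is the delicate point, and handling the low-frequency degeneracy is where most of the work lies.
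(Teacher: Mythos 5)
First, a point of reference: the paper does not prove this proposition at all --- it is stated with the proof ``omitted'' and deferred to \cite{Tzvetkov} (see also \cite{CaCa}), so there is no in-paper argument to compare against. Judged on its own terms, your outline points in a viable direction and your algebra is right: I checked that with $\xi=\xi_1+\xi_2-\xi_3$ one indeed has $\xi_1^4+\xi_2^4-\xi_3^4-\xi^4=2(\xi_1-\xi_3)(\xi_2-\xi_3)\bigl(\xi_1\xi_2+\xi_3\xi-2(\xi_1+\xi_2)^2\bigr)$, and the reduction to $s=0$ via $\bra{\xi}^s\lesssim\bra{\xi_1}^s\bra{\xi_2}^s\bra{\xi_3}^s$ is exactly the right first move. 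But as written this is a plan, not a proof: the entire weight of the argument falls on the near-resonant region where $H$ degenerates (either of the Schr\"odinger-type factors small, the quadratic factor small, or $|\xi|$ small where the curvature of $\xi^4$ vanishes), you correctly identify that the modulation gain $L_{\max}\gtrsim|H|$ disappears there, and you then defer precisely that case. Since that is the only case in which the claim could conceivably fail, the proposal has a genuine gap.

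The gap is fixable, and in fact the resonance machinery is unnecessary for the statement actually claimed (some $b<1/2$, not a sharp one). After the duality step and the reduction to $s=0$ with nonnegative Fourier transforms, apply H\"older to bound the quadrilinear form by $\prod_{i=1}^{3}\norm{u_i}_{L^4_{t,x}}\norm{v}_{L^4_{t,x}}$ (complex conjugates are harmless for Lebesgue norms). The scale-invariant Strichartz estimate $\norm{e^{it\partial_x^4}\phi}_{L^{10}_{t,x}}\lesssim\norm{\phi}_{L^2}$ transfers to $\norm{u}_{L^{10}_{t,x}}\lesssim\norm{u}_{X^{0,\frac12+}}$, and interpolating with the trivial identity $\norm{u}_{L^2_{t,x}}=\norm{u}_{X^{0,0}}$ gives $\norm{u}_{L^4_{t,x}}\lesssim\norm{u}_{X^{0,b}}$ for any $b>\frac{5}{16}$; choosing $b\in(\frac{5}{16},\frac12)$ closes the estimate uniformly in $s\ge0$. (Note that the $L^6_{t,x}$ route you might try first costs a derivative for the quartic dispersion in one dimension; $L^{10}_{t,x}$ is the derivative-free endpoint here.) This short argument sidesteps the dyadic decomposition, the bilinear transversality estimates, and the low-frequency degeneracy entirely, and is presumably close in spirit to what the cited references do.
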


\section{Proof of Theorem \ref{theorem1}}\label{sec:main proof 1}

Initially, we pick an extension $\tilde{u}_0\in H^s(\mathbb{R})$ of $u_0$ such that $$\|\tilde{u}_0\|_{H^s(\mathbb{R})}\leq 2\|u_0\|_{H^s(\mathbb{R}^+)}.$$ Let $b=b(s)<\frac{1}{2}$   such that the estimates given in Proposition \ref{prop:bi1} are valid. 

By using similar arguments as it was done in Subsection \ref{section4}, let
\begin{equation}\label{eq:solution}
u(t,x)= \mathcal{L}^{\lambda_1}\gamma_1(t,x)+\mathcal{L}^{\lambda_2}\gamma_2(t,x)+F(t,x),
\end{equation}
where $\gamma_i$ ($i=1,2$) will be chosen in terms of initial and boundary data $u_0$, $f$, $g$ and $F(t,x)=e^{it\partial_x^4}\tilde{u}_0 +\lambda\mathcal{D}(|u|^2u)$. 

Remember that $a_j $ and $b_j$ are defined by
\begin{equation}\label{eq:entries2}
a_j = \frac{M}{8\ }\left(\frac{e^{-i\frac{\pi}{8}(1+3\lambda_{j})}+e^{-i\frac{\pi}{8}(1-5\lambda_j)}}{\sin(\frac{1-\lambda_{j}}{4}\pi)}\right)\quad \text{and}\quad b_j =\frac{M}{8\ }\left(\frac{e^{-i\frac{\pi}{8}(-2+3\lambda_j)}+e^{-i\frac{\pi}{8}(6-5\lambda_j)}}{\sin (\frac{2-\lambda_j}{4}\pi)}\right).
\end{equation}
By Lemmas \ref{holmer1} and \ref{trace1}, we get
\begin{equation}\label{eq:f}
f(t)= u(t,0) = a_1\gamma_1(t)+a_2\gamma_2(t)+F(t,0)
\end{equation}
and
\begin{equation}\label{eq:g}
g(t)=\partial_x u(t,0) =b_1\mathcal{I}_{-\frac14}\gamma_1(t)+b_2\mathcal{I}_{-\frac14}\gamma_2(t)+ \partial_xF(t,0).
\end{equation}
Observe that putting together \eqref{eq:f} and \eqref{eq:g}, we can write a matrix in the following form 
\[ \left[\begin{array}{c}
f(t)-F(t,0) \\
\mathcal{I}_{\frac14}g(t)- \mathcal{I}_{\frac14}\partial_x F(t,0)  \end{array} \right]=
A
 \left[\begin{array}{c}
\gamma_1(t)\\
\gamma_2(t)   \end{array} \right], \]
where
\[A(\lambda_1,\lambda_2)=
\left[\begin{array}{cc}
a_1 & a_2 \\
	b_1 & b_2 \end{array} \right].\]
%
%
By using a mathematical software, the determinant of matrix $A(\lambda_1, \lambda_2)$ is given by
\begin{multline*}
\det A = 2 (-1)^{\frac{15}{8}} e^{-\frac{i (6 + 3 \lambda_1 + \lambda_2) \pi}{8} } \left( 1 + e^{i \lambda_1 \pi}\right) \left(-1 + e^{\frac{i \lambda_2 \pi}{2}}\right) \sec\left(\frac{ (1 + \lambda_1) \pi}{4}\right)\\
+4 (-1)^{\frac{3}{8}} e^{- \frac{i (\lambda_1 + \lambda_2) \pi}{8}} \left(-1 +    e^{\frac{i \lambda_1 \pi}{2}}\right) \left(1 - i e^{ \frac{i \lambda_2 \pi}{2}}\right).
\end{multline*}
 Note that the following graphics, with real and imaginary parts, of the determinant function $A(\lambda_1, \lambda_2)$,  helps us to see when the matrix $A$ is invertible.
 
\begin{figure}[h]
\centering
\begin{minipage}[c]{\textwidth}
	\includegraphics[width=8cm]{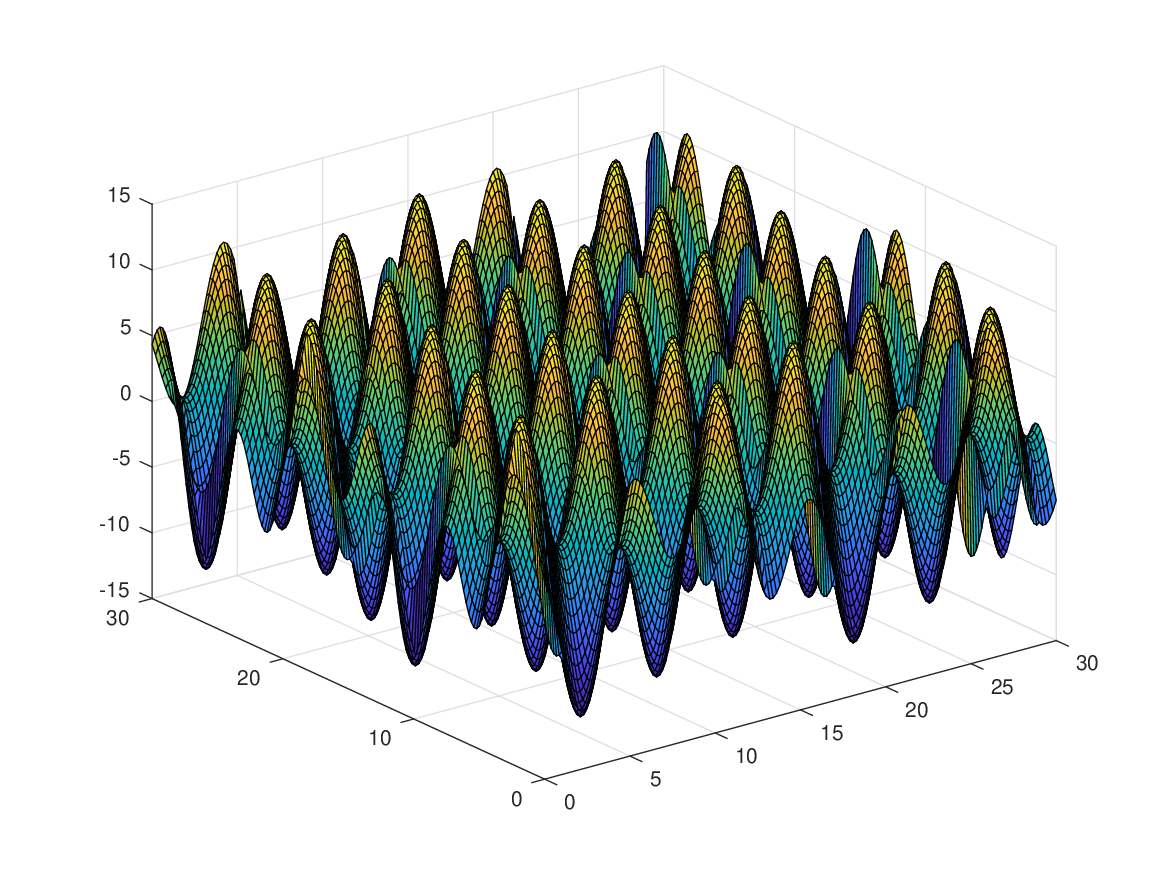}
	\includegraphics[width=8cm]{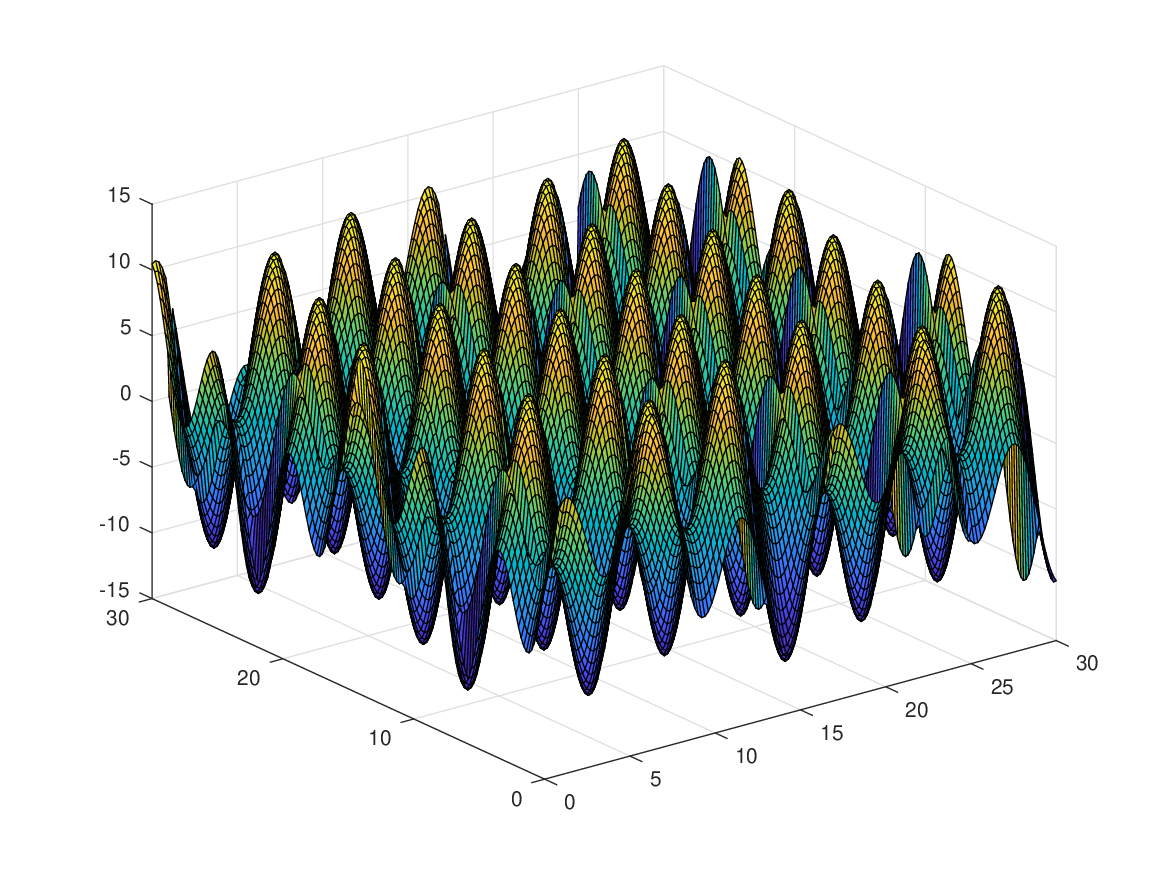}
\end{minipage}
	\caption{Real and imaginary part of $\det A$}\label{fig}
\end{figure}

Thus, matrix $A(\lambda_1,\lambda_2)$ is invertible if we get
\begin{multline}\label{det1}
\lambda_2\neq \frac{2}{\pi}\left( 2\pi n - i\log \left\lbrace\frac{-2(-1)^{\frac14}e{\frac{i\pi\lambda_1}{4}}+2(-1)^{\frac14}e^{\frac{3i\pi\lambda_1}{4}}+\left( e^{i\pi\lambda_1} +1\right)\sec\left(\frac{(1+\lambda_1)\pi}{4}\right) }{-2(-1)^{\frac34}e{\frac{i\pi\lambda_1}{4}}+2(-1)^{\frac34}e^{\frac{3i\pi\lambda_1}{4}}+\left( e^{i\pi\lambda_1} +1\right)\sec\left(\frac{(1+\lambda_1)\pi}{4}\right) } \right\rbrace \right)
\end{multline}
and
\begin{align}\label{eq:lambda condition}
\lambda_j \neq 1-4n, \quad  \lambda_j \neq 2-4n, \quad j=1,2,
\end{align}
for all $n \in \mathbb{Z}$. 

Figure \ref{fig} help us to see that there are infinity set of parameters which one satisfies the relations \eqref{det1} and \eqref{eq:lambda condition}.  In fact, for example, pick $\lambda_1\approx 0$ and $\lambda_2 \approx 1/3$. Thus, for $0\leq s<\frac12$, the choice of parameters $\lambda_1$ and $\lambda_2$ satisfying the following conditions
\begin{equation*}
-3<\lambda_j<\frac12,\ s+4b-2<\lambda_j<s+\frac12, \quad j=1,2,
\end{equation*}
ensures that Lemma \ref{edbf} holds. Thus, for a fix $s\in[0,\frac12)$, we can choose $\lambda_1$ and $\lambda_2$ as before and define the forcing functions $\gamma_1(t)$ and $\gamma_2(t)$ for any $\lambda_j$, $j=1,2$, given by
\begin{equation}\label{lambda} \left[\begin{array}{c}
\gamma_1(t)\\
\gamma_2(t)   \end{array} \right]=A^{-1}\left[\begin{array}{c}
f(t)-F(t,0) \\
\mathcal{I}_{\frac14}g(t)-  \mathcal{I}_{\frac14} \partial_xF(t,0)  \end{array} \right],
\end{equation}
which shows that formula \eqref{eq:solution} restricted on the set $(0,+\infty)\times (0,+\infty)$ satisfies $$(i\pt - \px^4)u = \lambda |u|^2u,$$
in the sense of distributions.

Thus, we define the solution operator by
\begin{equation}\label{eq:solution op}
\Lambda u(t,x)= \psi(t)\mathcal{L}^{\lambda_1}\gamma_1(t,x)+\psi(t)\mathcal{L}^{\lambda_2}\gamma_2(t,x)+\psi(t)F(t,x),
\end{equation}
where 
\[\left[\begin{array}{c}
\gamma_1(t)\\
\gamma_2(t)   \end{array} \right]=A^{-1}\left[\begin{array}{c}
f(t)-F(t,0) \\
\mathcal{I}_{\frac14}g(t)-  \mathcal{I}_{\frac14} \partial_xF(t,0)  \end{array} \right],\]
 $F(t,x)=e^{it\partial_x^4}\tilde{u}_0+\lambda\mathcal{D}(\psi_T|u|^2u)$ and $\psi$ is defined by \eqref{eq:cutoff}.

 
Recall the solution space $Z^{s,b}$, defined in Subsection \ref{sec:sol space}, under the norm
\[\norm{v}_{Z^{s,b}} = \sup_{t \in \R} \norm{v(t,\cdot)}_{H^s} + \sum_{j=0}^{1}\sup_{x \in \R} \norm{\px^jv(\cdot,x)}_{H^{\frac{2s+3-2j}{8}}} + \norm{v}_{X^{s,b} }.\]
The estimates obtained in Section \ref{sec:pre} together with estimates of Section \ref{sec:energy} and \eqref{eq:bilinear1} yield that
\[\|\Lambda u\|_{Z^{s,b}}\leq c(\|u_0\|_{H^s(\R^+)}+\|f\|_{H^{\frac{2s+3}{8}}(\R^+)}+\|g\|_{H^{\frac{2s+1}{8}(\R^+)}} ) + C_1{T^{\epsilon}}\norm{u}_{Z^{s,b}}^3,\]
for $\epsilon$ adequately small.
Similarly,
\[\|\Lambda u_1 - \Lambda u_2\|_{Z^{s,b}}\leq C_2{T^{\epsilon}}(\norm{u_1}_{Z^{s,b}}^2+\norm{u_2}_{Z^{s,b}}^2)\norm{u_1-u_2}_{Z^{s,b}},\]
for $u_1(0,x) = u_2(0,x)$.

Consider in $Z$ the ball defined by 
$
B=\{ u\in Z^{s,b}; \|u\|_{Z^{s,b}}\leq M\},
$
where $$M=2 c\left(\|u_0\|_{H^s(\R^+)}+\|f\|_{H^{\frac{2s+3}{8}}(\R^+)}+\|g\|_{H^{\frac{2s+1}{8}(\R^+)}}\right).$$ Lastly, choosing $T=T(M)$ sufficiently small, such that 
 \[\|\Lambda u\|_{Z^{s,b}}\leq M\]
 and
 \[ \|\Lambda u_1 - \Lambda u_2\|_{Z^{s,b}}\leq \frac12\norm{u_1-u_2}_{Z^{s,b}}, \] it follows that $\Lambda$ is a contraction map on $B$, and the proof of Theorem \ref{theorem1} is achieved. \qed
 
\begin{remarks}In what concerns our main result, Theorem \ref{theorem1}, the following remarks are now in order.
\begin{itemize}
	\item[i.]
	An important point to treat in dispersive system is the analysis of the scaling. For our case, that is,  for the biharmonic Schr\"odinger equation on the half-line, we have the following: If $u(t,x)$ is solution for IBVP \eqref{fourth} on $[0,T)\times(0,\infty)$, then,  for $\lambda>0$, the function $u_{\lambda}(t,x)=\lambda^2u(\lambda^4t,\lambda x)$ is solution for \eqref{fourth} on $[0,T/\lambda^4)\times(0,\infty)$ with initial-boundary conditions $u_{\lambda}(0,x)=\lambda^2u_0(\lambda x):=u_{0\lambda}$, $u_{\lambda}(t,0)=\lambda^2f(\lambda^4t):=f_{\lambda}$ and $u_{x,\lambda}(t,0)=\lambda^3g(\lambda^4t):=g_{\lambda}$.  A straightforward calculation gives
	\begin{multline}\label{scaling}
	\|u_{0\lambda}\|_{H^s(\mathbb{R}^+)}+\|f_{\lambda}\|_{H^{\frac{2s+3}{8}}(\mathbb{R}^+)}+\|g_{\lambda}\|_{H^{\frac{2s+1}{8}}(\mathbb{R}^+)}\\
	\lesssim\lambda^{3/2}\langle\lambda\rangle^s\|u_{0}\|_{H^s(\mathbb{R}^+)}+\langle\lambda\rangle^{\frac{2s+3}{2}}\|f\|_{H^{\frac{2s+3}{8}}(\mathbb{R}^+)}+\lambda\langle\lambda\rangle^{\frac{2s+1}{8}}\|g\|_{H^{\frac{2s+1}{8}}(\mathbb{R}^+)}.
	\end{multline}
	\item[ii.] In order to make the norms of our initial data $u_0$, $f$ and $g$ small, we re-scale the data $u_0$ and $g$ by choosing $\lambda$ adequately small, by using \eqref{scaling}. However, we can not re-scale the function $f$ since in \eqref{scaling} does not appear a positive power of $\lambda$.  To overcome this difficulty in our context,  we introduce the cut-off function $\psi_T$, defined by \eqref{eq:cutoff}, in the operator $\Lambda$ (see equation \eqref{eq:solution op}) to prove that  $\Lambda$ is thus a contraction, proving the main result of the article.
		\item[iii.] It is important to note that the scaling argument was successful  in the cases of the quadratic NLS equation \cite{Cavalcante}, KdV equation \cite{Holmerkdv} and Kawahara equation \cite{CaK} posed on the half-line.
	\item[iv.] Finally, in view of \eqref{eq:f}, \eqref{eq:g} and \eqref{lambda},  it is necessary to check $\gamma_i(t)$, $i=1,2$ to be well-defined in $H_0^{\frac{2s+3}{8}}(\R^+)$. However, it follows from Lemmas \ref{grupo}, \ref{duhamel} and \ref{edbf}, Propositions \ref{prop:bi1} and Lemmas \ref{sobolevh0} and \ref{alta}.
	\end{itemize}
		\end{remarks}

\section{Further comments and open problems}\label{further}
In this section, our plan is to present four problems that can be treated with the approach used in this article.

\subsection{Biharmonic NLS on star graphs} 
In a recent work \cite{CaCaGa}, the authors consider the biharmonic Schr\"odinger equation on star graphs, given by $N$ edges $(0,\infty)$ connected with a common vertex $(0,0, \cdots, 0)$ (see Figure \ref{fig2}), namely 
\begin{equation}\label{graph}
\begin{cases}
i\pt u_j - \px^4 u_j + \lambda |u_j|^2u_j=0, & (t,x)\in (0,T)  \times(0,\infty),\ j=1,2, ..., N\\
u_j(0,x)=u_{j0}(x),                                   & x\in(0,\infty),
\end{cases}
\end{equation}
with initial conditions $(u_1(0,x),u_2(0,x), ...,u_N(0,x))\in H^{s}(\R^+)$.

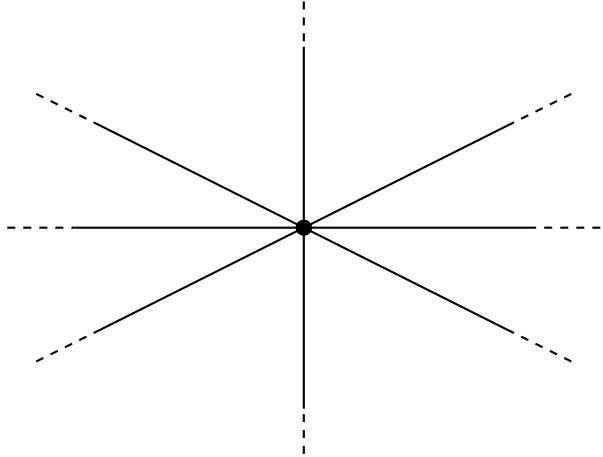
\begin{figure}[htp]
	\centering 
	\begin{tikzpicture}[scale=3]
	\draw[thick](-1,0)--(0,0);
	\draw [thick, dashed] (-1.3,0)--(-1,0);

	
		\draw [thick, dashed] (1.3,0)--(1,0);
	\draw[thick](1,0)--(0,0);
	
		\draw[thick](0,-0.8)--(0,0);
	\draw [thick, dashed] (0,-1)--(0,-0.8);
	
	
		\draw[thick](0,0.8)--(0,0);
	\draw [thick, dashed] (0,1)--(0,0.8);

	
	\draw[thick](0,0)--(-0.89,0.45);
	\draw [thick, dashed] (-0.89,0.45)--(-1.19,0.6);
	
	\draw[thick](0,0)--(-0.89,-0.45);
	\draw [thick, dashed] (-0.89,-0.45)--(-1.19,-0.6);
	\fill (0,0)  circle[radius=1pt];

	\draw[thick](0,0)--(0.89,0.45);
	\draw [thick, dashed] (0.89,0.45)--(1.19,0.6);
	\draw[thick](0,0)--(0.89,-0.45);
	\draw [thick, dashed] (0.89,-0.45)--(1.19,-0.6);
	\fill (0,0)  circle[radius=1pt];
	\end{tikzpicture}
	\caption{Star graphs connected with a common vertex $(0,0, \cdots, 0)$ }
	\label{fig2}
\end{figure}

For a better understanding,  we are interested in solving \eqref{graph} with the following three classes boundary conditions:
\begin{equation}\label{boundary1}
\begin{cases}
\partial_x^k u_1(t,0)=\partial_x^k u_2(t,0)=\cdots=u_N(t,0),\ k=0,1 & t\in(0,T)\\
\sum_{j=1}^n\partial_x^ku_j(t,0)=0,\ k=2,3\ & t\in(0,T);
\end{cases}
\end{equation}
\begin{equation}\label{boundary2}
\begin{cases}
\partial_x^k u_1(t,0)=\partial_x^k u_2(t,0)=\cdots=u_N(t,0),\ k=2,3 & t\in(0,T)\\
\sum_{j=1}^n\partial_x^ku_j(t,0)=0,\ k=0,1\ & t\in(0,T)
\end{cases}
\end{equation}
and
\begin{equation}\label{boundary3}
\begin{cases}
\partial_x^k u_1(t,0)=\partial_x^k u_2(t,0)=\cdots=u_N(t,0),\ k=0,3 & t\in(0,T)\\
\sum_{j=1}^n\partial_x^ku_j(t,0)=0,\ k=1,2\ & t\in(0,T).
\end{cases}
\end{equation}

The motivation of these boundary conditions and how we can choose it, follows the ideas contained in the paper of the second author \cite{Cav}, and are detailed in \cite{CaCaGa}. 

\subsection{Control theory} We split this section in two parts: Control theory for the biharmonic NLS on star graphs and on an unbounded domain, respectively.

\subsubsection{Control theory of Biharmonic NLS on star graphs} First, let us consider the controllability problem associated to \eqref{graph} with three possibilities of boundary conditions, namely, \eqref{boundary1}, \eqref{boundary2} and \eqref{boundary3}. Due of the recent development of graph theory for Korteweg-de Vries equation, in the following paragraph we present a few comments about this study.

In three interesting papers Ammari and Crepeau  \cite{AmCr},   Cavalcante \cite{Cav} and Mugnolo \textit{et al.} \cite{Noja} deal with the study of the KdV and Airy equations in graphs. In summary, in the first work, the authors proposed a model using the Korteweg--de Vries equation on a finite star-shaped network and proved the well-posedness of the system. Also, as main result of the work, by using properties of the energy, they showed that the solutions of the system decays exponentially to zero (as $t\to\infty$) and they studied an exact boundary controllability problem. In the second work, Cavalcante showed local well-posedness for the Cauchy problem associated with Korteweg--de Vries equation on a metric star graph. More precisely, he used the Duhamel boundary forcing operator, in the context of half-line, introduced by Colliander and Kenig \cite{CK} and Holmer \cite{Holmerkdv}  to achieve his result. Finally, Mugnolo \textit{et al.} obtained a characterization of all boundary conditions under which the Airy-type evolution equation $u_t=\alpha u_{xxx}+\beta u_x$, for $\alpha\in \R\setminus \{0\}$ and $\beta\in \R$ on star graphs,
generates contraction semigroups.

In this spirit, looking for the energy identity of the system \eqref{graph}, namely the $L^2$-energy, which one satisfies an equality given by
\begin{equation}\label{energy}
\begin{split}
E(u_1(T,x),u_2(T,x),\cdots, u_N(T,x))=&- \sum_{i=1}^N \int_0^T\text{Im}(\partial_x^3u_j(t,0)\overline{u}_j(t,0)) dt\\ &+\sum_{i=1}^N\int_0^T\text{Im}(\partial_x^2u_j(t,0)\partial_x\overline{u}_j(t,0)) dt\\
&-E(u_1(0,x),u_2(0,x),\cdots, u_N(0,x)),
\end{split}
\end{equation}
where $$E(u_1(t,x),u_2(t,x),\cdots, u_N(t,x)):=\sum_{i=1}^{N}\int_0^{+\infty}|u(t,x)|^2dx,$$
the following natural questions arise.

\vspace{0.1cm}
\noindent\textbf{Problem $\mathcal{A}$}: Which are the boundary conditions that we can impose in \eqref{boundary1}, \eqref{boundary2} and \eqref{boundary3} such that the energy is a non-increasing function of the time variable $t$?
\vspace{0.1cm}


\vspace{0.1cm}
\noindent\textbf{Problem $\mathcal{B}$}: If we can impose some boundary conditions such that the energy \eqref{energy} is a non-increasing function of the time variable $t$, is the system \eqref{graph}, with appropriated boundary conditions, asymptotically stable when the time tends to infinity?
\vspace{0.1cm}

Finally, the last problem in this subsection is the following one.

\vspace{0.1cm}
\noindent\textbf{Problem $\mathcal{C}$}: Can we find appropriated boundary controls such that the system \eqref{graph} is controllable in some sense?

\subsubsection{Control theory of Biharmonic NLS in unbounded domain} In the context of control in unbounded domain Faminskii \cite{Faminskii}, in a recent work, considered the initial-boundary value problem posed on infinite domain for Korteweg--de Vries equation. Precisely, he elected a function $f_0$ on the right-hand side of the equation as an unknown function, regarded as a control. Thus, the author proved that this function must be chosen such that the corresponding solution should satisfy certain additional integral condition.

Thus, this techniques, probably, work well for the following biharmonic NLS system
\begin{equation}\label{fourthd}
\begin{cases}
i\pt u +\gamma\px^4 u + \lambda |u|^2u=f_0(t)v(x,t), & (t,x)\in (0,T)  \times(0,\infty),\\
u(0,x)=u_0(x),                                   & x\in(0,\infty),\\
u(t,0)=h(t),\ u_x(t,0)=g(t)& t\in(0,T),
\end{cases}
\end{equation}
for $\gamma, \lambda\in\mathbb{R}$, where $v$ is a given function and $f_0$ is an unknown control function. Therefore, the issue here is:

\vspace{0.1cm}
\noindent\textbf{Problem $\mathcal{D}$}: Is the \eqref{fourthd} controllable in the sense of Faminskii's work?  Namely, can we find a pair $\{f_0, u\}$, satisfying an appropriated additional integral conditions  (for details see \cite{Faminskii})?

\subsection*{Acknowledgments} The authors wish to thank the referee for his/her valuable comments which improved this paper. R. A. Capistrano--Filho was supported by CNPq (Brazil) grants 306475/2017-0, 408181/2018-4, CAPES-PRINT (Brazil) grant 88881.311964/2018-01 and Qualis A - Propesq (UFPE). F. A. Gallego was partially supported under projects SIGP 58907 and 45511.  This work was carried out during some visits of the authors to the Federal University of Pernambuco and Universidad Nacional de Colombia - Sede Manizales. The authors would like to thank both Universities for its hospitality. 

\end{document}